\documentclass[12pt]{amsart}

\usepackage[english]{babel}

\usepackage[
pdftex,hyperfootnotes]{hyperref}
\hypersetup{
 colorlinks=true,
 linkcolor=NavyBlue, 
 urlcolor=RoyalPurple,
 citecolor=OliveGreen,
 pdftitle={Positive bidiagonal factorization of tetradiagonal Hessenberg matrices},
 bookmarks=true,
 pdfpagemode=FullScreen,
}

\usepackage[usenames,dvipsnames,svgnames,table,x11names]{xcolor}
\usepackage{pgfplots}
\usepackage{tikz}
\usepackage{tikz-3dplot}
\usetikzlibrary{automata,quotes, chains,matrix,calc,shadows,shapes.callouts,shapes.geometric,shapes.misc,positioning,patterns,decorations.shapes,
 decorations.pathmorphing,decorations.markings,decorations.fractals,decorations.pathreplacing,shadings,fadings,arrows.meta,bending}

\usepackage{nicematrix}
\usepackage[utf8]{inputenc}
\usepackage{polski}
\usepackage{comment}
\usepackage[textwidth=17.5cm,textheight=22.275cm,
height=
24.275cm,
width=18cm]{geometry}

\usepackage{rotating,multirow,pdflscape}
\usepackage{amssymb,latexsym,amsmath,amsthm,bm}
\usepackage{mathrsfs}
\usepackage{mathtools,arydshln,mathdots}
\usepackage{bigints}
\makeatletter
\renewcommand*\env@matrix[1][*\c@MaxMatrixCols c]{%
 \hskip -\arraycolsep
 \let\@ifnextchar\new@ifnextchar
 \array{#1}}
\makeatother

\mathtoolsset{centercolon}
\usepackage[table]{xcolor}
\usepackage[toc,page]{appendix}

\usepackage{tocvsec2}

\usepackage{Baskervaldx}
\usepackage[]{newtxmath }
%



%

\newtheorem{coro}{Corollary}

\newtheorem{defi}{Definition}
\newtheorem{teo}{Theorem}

\newtheorem{pro}{Proposition}
\newtheorem{lemma}{Lemma}

\newcommand{\diag}{\operatorname{diag}}

\newcommand{\N}{\mathbb{N}}
\newcommand{\R}{\mathbb{R}}

\allowdisplaybreaks[1]

\makeatletter
\DeclareRobustCommand{\gaussk}{\DOTSB\gaussk@\slimits@}
\newcommand{\gaussk@}{\mathop{\vphantom{\sum}\mathpalette\bigcal@{K}}}

\newcommand{\bigcal@}[2]{%
 \vcenter{\m@th
 \sbox\z@{$#1\sum$}%
 \dimen@=\dimexpr\ht\z@+\dp\z@
 \hbox{\resizebox{!}{0.8\dimen@}{$\mathcal{K}$}}%
 }%
}
\newcommand{\cfracplus}{\mathbin{\cfracplus@}}
\newcommand{\cfracplus@}{%
 \sbox\z@{$\dfrac{1}{1}$}%
 \sbox\tw@{$+$}%
 \raisebox{\dimexpr\dp\tw@-\dp\z@\relax}{$+$}%
}
\newcommand{\cfracdots}{\mathord{\cfracdots@}}
\newcommand{\cfracdots@}{%
 \sbox\z@{$\dfrac{1}{1}$}%
 \sbox\tw@{$+$}%
 \raisebox{\dimexpr\dp\tw@-\dp\z@\relax}{$\cdots$}%
}
\makeatother
%
%

\begin{document}

\title[Positive bidiagonal factorization of tetradiagonal matrices]
{Positive bidiagonal factorization \\of tetradiagonal Hessenberg matrices}

\author[A Branquinho]{Amílcar Branquinho$^{1,\flat}$}
\address{$^1$CMUC,
Departamento de Matemática, Universidade de Coimbra, 3001-454 Coimbra, Portugal}
\email{ajplb@mat.uc.pt}

\author[A Foulquié]{Ana Foulquié-Moreno$^{2,\natural}$}
\address{$^2$CIDMA,
Departamento de Matemática, Universidade de Aveiro, 3810-193 Aveiro, Portugal}
\email{foulquie@ua.pt}

\author[M Mañas]{Manuel Mañas$^{3,\clubsuit}$}
\address{$^3$Departamento de Física Teórica, Universidad Complutense de Madrid, Plaza Ciencias 1, 28040-Madrid, Spain \&
 Instituto de Ciencias Matematicas (ICMAT), Campus de Cantoblanco UAM, 28049-Madrid, Spain}
\email{manuel.manas@ucm.es}

\thanks{$^\flat$Acknowledges Centro de Matemática da Universidade de Coimbra (CMUC) -- UID/MAT/00324/2019, funded by the Portuguese Government through FCT/MEC and co-funded by the European Regional Development Fund through the Partnership Agreement PT2020}

\thanks{$^\natural$Acknowledges CIDMA Center for Research and Development in Mathematics and Applications (University of Aveiro) and the Portuguese Foundation for Science and Technology (FCT) within project UIDB/MAT/UID/04106/2020 and UIDP/MAT/04106/2020}

\thanks{$^\clubsuit$Thanks financial support from the Spanish ``Agencia Estatal de Investigación'' research project [PGC2018-096504-B-C33], \emph{Ortogonalidad y Aproximación: Teoría y Aplicaciones en Física Matemática} and [PID2021- 122154NB-I00], \emph{Ortogonalidad y Aproximación con Aplicaciones en Machine Learning y Teoría de la Probabilidad}.}

\keywords{Banded Hessenberg matrices, oscillatory matrices, totally nonnegative matrices, continued fractions, Gauss--Borel factorization, bidiagonal factorization, oscillatory retracted matrices}

\subjclass{42C05,33C45,33C47,15B48,47B39,47B28,47B36}

\begin{abstract}
 Recently a spectral Favard theorem for bounded banded lower Hessenberg matrices that admit a positive bidiagonal factorization was presented. 
 In this paper conditions, in terms of continued fractions, for an oscillatory tetradiagonal Hessenberg matrix to have such positive bidiagonal factorization are found. Oscillatory tetradiagonal Toeplitz matrices are taken as a case study of matrix that admits a positive bidiagonal factorization. Moreover, it is proved that oscillatory banded Hessenberg matrices are organized in rays, with the origin of the ray not having the positive bidiagonal factorization and all the interior points of the ray having such positive bidiagonal factorization.

 \end{abstract}
 
 \maketitle
 
 
 \section{Introduction}
 

In this paper we will study for the tetradiagonal Hessenberg matrix of the form
\begin{align}\label{eq:monic_Hessenberg}
 T&=\left[\begin{NiceMatrix}[columns-width = auto]
 c_0 & 1&0&\Cdots&\\
 b_1&c_1 & 1&\Ddots&&\\
 a_2&b_2&c_2&1&&\\
 0&a_3&b_3&c_3&1&\\
 \Vdots&\Ddots&\Ddots&\Ddots&\Ddots&\Ddots\\&&&&
 \end{NiceMatrix}\right], 
\end{align}
where we assume that $a_n>0$, whether it is possible or not to find a \emph{positive bidiagonal factorization}~(PBF)
 \begin{align}\label{eq:bidiagonal}
 T= L_{1} L_{2} U,
\end{align}
with bidiagonal matrices given by 
\begin{align}\label{eq:LU_J_oscillatory_factors}
L_1=
 \left[\begin{NiceMatrix}
 1 &0&0&\Cdots\\
 \alpha_{2} & 1 &0&\Cdots\\
 0& \alpha_{5} & 1& \Ddots\\
 \Vdots& \Ddots& \Ddots & \Ddots\\
 &&&
 \end{NiceMatrix}\right],
 \ L_2 &=
 \left[\begin{NiceMatrix}
 1 &0&0&\Cdots\\
 \alpha_{3} & 1 &0&\Cdots\\
 0& \alpha_{6} & 1& \Ddots\\
 \Vdots& \Ddots& \Ddots & \Ddots\\&&&
 \end{NiceMatrix}\right] ,
 \ U =
 \left[\begin{NiceMatrix}
 \alpha_1 & 1 &0&\Cdots&\\
 0& \alpha_4 & 1 &\Ddots&\\
 0&0&\alpha_7&\Ddots&\\
 \Vdots& \Ddots& \Ddots &\Ddots &
 \end{NiceMatrix}\right],
\end{align}
with the fulfillment of the following positivity requirement
\begin{align*}
 \alpha_j&>0, & j\in\N.
 \end{align*}
 In \cite{previo} this factorization was shown to be sufficient for a Favard theorem for bounded banded Hessenberg semi-infinite matrices (with $p+2$ diagonals) and the existence of positive measures such that the recursion polynomials are multiple orthogonal polynomials and the Hessenberg matrix is the recursion matrix for this sequence of multiple orthogonal polynomials.
Let us also mention~\cite{Grunbaum_Iglesia} were the authors present an important application of this factorization in order to obtain stochastic bidiagonal factorizations of stochastic Hessenberg matrices.

Finite truncations of these matrices having this PBF are oscillatory matrices. In fact, we will be dealing in this paper with totally nonnegative matrices that are oscillatory matrices and, consequently, we require of some definitions and properties that we are about to present succinctly.

Totally nonnegative (TN) matrices are those with all their minors nonnegative \cite{Fallat-Johnson,Gantmacher-Krein}, and the set of nonsingular TN matrices is denoted by InTN. Oscillatory matrices \cite{Gantmacher-Krein} are totally nonnegative, irreducible \cite{Horn-Johnson} and nonsingular. Notice that the set of oscillatory matrices is denoted by IITN (irreducible invertible totally nonnegative) in \cite{Fallat-Johnson}. An oscillatory matrix~$A$ is equivalently defined as a totally nonnegative matrix $A$ such that for some $n$ we have that $A^n$ is totally positive (all minors are positive). From Cauchy--Binet Theorem one can deduce the invariance of these sets of matrices under the usual matrix product. Thus, following \cite[Theorem 1.1.2]{Fallat-Johnson} the product of matrices in InTN is again InTN (similar statements hold for TN or oscillatory matrices).

We have the important result:
\begin{teo}[Gantmacher--Krein Criterion] \label{teo:Gantmacher--Krein Criterion}
\cite[Chapter 2, Theorem 10]{Gantmacher-Krein} 
A~totally nonnegative matrix $A$ is oscillatory if and only if it is nonsingular and the elements at the first subdiagonal and first superdiagonal are positive.
\end{teo}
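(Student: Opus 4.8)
The plan is to prove the two implications separately, using the two equivalent descriptions of an oscillatory matrix recalled above: a totally nonnegative, nonsingular, irreducible matrix, or, equivalently, a totally nonnegative matrix some power of which is totally positive. Throughout I would use the standard fact that a nonsingular totally nonnegative matrix has strictly positive diagonal entries $a_{ii}>0$ (otherwise a suitable principal minor, and hence the determinant, would vanish); this is the only external input.

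For the necessity ($A$ oscillatory $\Rightarrow$ nonsingular with positive adjacent diagonals), nonsingularity is part of the definition, so only the positivity of $a_{i,i+1}$ and $a_{i+1,i}$ requires work. The key tool is a \emph{propagation-of-zeros} (or \emph{shadow}) lemma: if $A$ is totally nonnegative and $a_{i,i+1}=0$ for some $i$, then the whole upper-right block $A[\{1,\dots,i\};\{i+1,\dots,n\}]$ vanishes. I would establish this by exploiting $2\times 2$ minors together with the positivity of the diagonal: from $\det A[\{k,i\};\{i,i+1\}]\ge 0$ with $a_{i,i+1}=0$ and $a_{ii}>0$ one forces $a_{k,i+1}=0$ for all $k\le i$; symmetrically $a_{i,l}=0$ for all $l\ge i+1$; and then a further $2\times 2$ minor on rows $\{k,i+1\}$ and columns $\{i+1,l\}$ fills in the remaining interior entries. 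Once this block is zero, $A$ is block lower-triangular, hence reducible, contradicting irreducibility; equivalently, the zero block is inherited by every power $A^m$, so no power can be totally positive, again a contradiction. The statement for the subdiagonal then follows by applying the same argument to $A^{\top}$, which is oscillatory whenever $A$ is.

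For the sufficiency (nonsingular totally nonnegative with $a_{i,i+1}>0$ and $a_{i+1,i}>0$ $\Rightarrow$ oscillatory), I would argue through irreducibility. Since $A$ is nonsingular and totally nonnegative its diagonal is positive, and by hypothesis the first sub- and super-diagonals are positive as well; hence in the directed graph of $A$ every pair of consecutive indices is joined in both directions by an edge of nonzero weight. This makes the graph strongly connected, so $A$ is irreducible, and a nonsingular, irreducible, totally nonnegative matrix is oscillatory by the very definition recalled above.

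The main obstacle is the shadow lemma, where one must invoke the positivity of the diagonal entries at exactly the right places so that nonnegativity of the relevant $2\times 2$ minors genuinely forces the off-diagonal entries to vanish; the graph-theoretic half is then essentially immediate. Should one prefer to bypass the irreducibility characterization and verify directly that, say, $A^{n-1}$ is totally positive, the harder alternative is a Cauchy--Binet argument exhibiting, for each minor of $A^{n-1}$, at least one strictly positive chain of consecutive minors of $A$; I expect that route to be considerably more technical than the one above.
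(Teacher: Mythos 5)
The paper offers no proof of this statement: it is quoted directly from Gantmacher--Krein (Chapter~2, Theorem~10), so there is no internal argument to compare yours against, and your attempt must be judged on its own. Your necessity direction is correct and is the standard argument: the shadow lemma you sketch works exactly as you say, since $\det A[\{k,i\};\{i,i+1\}]=-a_{k,i+1}a_{i,i}\geqslant 0$ with $a_{i,i}>0$ forces $a_{k,i+1}=0$ for $k\leqslant i$, the symmetric minor kills $a_{i,l}$ for $l\geqslant i+1$, and the minor on rows $\{k,i+1\}$ and columns $\{i+1,l\}$ then clears the rest of the block; the resulting block-triangular structure contradicts either irreducibility or the total positivity of any power.

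The sufficiency direction, however, has a genuine gap unless one is prepared to \emph{define} oscillatory as ``nonsingular irreducible TN''. Under the definition actually used by Gantmacher and Krein --- some power $A^k$ is totally positive --- the whole difficulty of their Theorem~10 lies in the implication you defer to your final sentence: that a nonsingular TN matrix with positive first sub- and superdiagonals has $A^{n-1}$ totally positive, which is established by a Cauchy--Binet argument exhibiting, inside each minor of $A^{n-1}$, a positive chain of quasi-principal minors of $A$. Your route instead reads ``strongly connected digraph $\Rightarrow$ irreducible $\Rightarrow$ oscillatory by the equivalent definition'', but the equivalence of the irreducibility characterization with the totally-positive-power definition is itself proved (in Fallat--Johnson as much as in Gantmacher--Krein) \emph{by means of} this very criterion, so at that point the reasoning is circular. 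The paper admittedly invites this reading by listing both descriptions as interchangeable definitions in the introduction, but if the theorem is to carry the content its attribution claims, the Cauchy--Binet step is not an optional ``more technical alternative''; it is the missing core of the proof.
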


Let us discuss the connection of oscillatory matrices with the standard case, i.e. for tridiagonal semi-infinite matrices, named in the literature as Jacobi matrices.

 \begin{defi} 
	Jacobi matrices are tridiagonal real matrices
	\begin{align}\label{eq:Jacobi_matrix_def}
		J&\coloneq \left[\begin{NiceMatrix}[columns-width = auto]
			\mathscr m_0& 1 &0&\Cdots&\\
			\mathscr l_1 &\mathscr m_1& 1& \Ddots&\\
			0&\mathscr l_2&\mathscr m_2&1&\\
			\Vdots& \Ddots& \Ddots & \Ddots&\Ddots\\
			&&&&\\
		\end{NiceMatrix}\right],
	\end{align}
	with $\mathscr l_j>0$, $j = 1 , 2, \dots$ 
	and its finite truncations, that is leading principal submatrices, are
	\begin{align}\label{eq:N_Jacobi}
		J^{[N]}&\coloneq \left[\begin{NiceMatrix}[columns-width = auto]
			\mathscr m_0& 1 &0&\Cdots&&0\\
			\mathscr l_1 &\mathscr m_1& 1& \Ddots&&\Vdots\\
			0&\mathscr l_2&\mathscr m_2&1&&\\
			\Vdots& \Ddots& \Ddots & \Ddots&\Ddots&0\\
			&&&&&1\\
			0 &\Cdots&&0&\mathscr l_N&\mathscr m_N
		\end{NiceMatrix}\right]\in\R^{(N+1)\times (N+1)}, & \Delta_N&\coloneq \det J^{[N]},
	\end{align}
	further important submatrices are
	\begin{align}\label{eq:Nk_Jacobi}
		J^{[N,k]} \coloneq \left[\begin{NiceMatrix}[columns-width = auto]
			\mathscr m_{k}& 1 &0&\Cdots&&0\\
			\mathscr l_{k+1} &\mathscr m_{k+1}& 1& \Ddots&&\Vdots\\
			0&\mathscr l_{k+2}&\mathscr m_{k+2}&1&&\\
			\Vdots& \Ddots& \Ddots & \Ddots&\Ddots&0\\
			&&&&&1\\
			0 &\Cdots&&0&\mathscr l_N&\mathscr m_N
		\end{NiceMatrix}\right]\in\R^{(N+1-k)\times (N+1-k)},
		\
		\Delta_{N,k}\coloneq \det J^{[N,k]} .
	\end{align}
\end{defi}

Regarding oscillatory Jacobi matrices we have

\begin{teo}[Oscillatory Jacobi matrices] \label{teo:Jacobi oscillatory}
 \cite[Chapter XIII,\S 9]{Gantmacher} and \cite[Chapter 2,Theorem 11]{Gantmacher-Krein}.
 A tridiagonal matrix is oscillatory if and only if, 
\begin{enumerate}
 \item The matrix entries of the first subdiagonal and first superdiagonal are positive.
 \item All leading principal minors are positive.
\end{enumerate}
\end{teo}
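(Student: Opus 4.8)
The plan is to prove the statement for the finite truncations $J^{[N]}$, since being oscillatory is a property of finite matrices; the claim for the semi-infinite $J$ is then read off from all of its leading principal submatrices. Throughout write $\Delta_k=\det J^{[k]}$ for the leading principal minors, with the convention $\Delta_{-1}=1$, and record the three-term recurrence
\[
\Delta_k=\mathscr m_k\Delta_{k-1}-\mathscr l_k\,s_{k-1}\,\Delta_{k-2},\qquad k\ge 0,
\]
obtained by Laplace expansion along the last row and column, where $s_{k-1}>0$ denotes the superdiagonal entry in position $(k-1,k)$ (equal to $1$ in the normalization of \eqref{eq:Jacobi_matrix_def}).

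For the direct implication, I would assume $J^{[N]}$ is oscillatory. Condition (1) is then immediate from the Gantmacher--Krein Criterion (Theorem~\ref{teo:Gantmacher--Krein Criterion}), which forces the first sub- and superdiagonal entries of an oscillatory matrix to be positive. For condition (2), oscillatory means in particular totally nonnegative, so every $\Delta_k\ge 0$, and nonsingular, so $\Delta_N\neq 0$, whence $\Delta_N>0$. I would then exclude a vanishing minor by a minimality argument: if $k$ were the smallest index with $\Delta_k=0$, then $\Delta_{k-1}>0$ (or $\Delta_{-1}=1$), and the recurrence gives $\Delta_{k+1}=-\mathscr l_{k+1}s_k\Delta_{k-1}<0$ because $\mathscr l_{k+1}s_k>0$ by (1); since necessarily $k<N$, this contradicts $\Delta_{k+1}\ge 0$. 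Hence all $\Delta_k>0$.

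For the converse, assume (1) and (2). Nonsingularity is immediate, as $\det J^{[N]}=\Delta_N>0$, and irreducibility holds because a tridiagonal matrix with nonzero sub- and superdiagonal is irreducible. The substantive point is total nonnegativity, which I would obtain from the bidiagonal ($LU$) factorization: positivity of all leading principal minors guarantees a factorization $J^{[N]}=LU$ with $L$ unit lower bidiagonal and $U$ upper bidiagonal, whose entries are explicitly the diagonal $\Delta_k/\Delta_{k-1}>0$ of $U$, the superdiagonal $s_{k-1}$ of $U$ inherited from $J$ (positive by (1)), and the subdiagonal $\mathscr l_k\Delta_{k-2}/\Delta_{k-1}>0$ of $L$. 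Since a bidiagonal matrix with nonnegative entries is totally nonnegative and, by the Cauchy--Binet argument recalled in the excerpt, products of totally nonnegative matrices are totally nonnegative, $J^{[N]}=LU$ is totally nonnegative. Being also nonsingular with positive sub- and superdiagonal, it is oscillatory by the Gantmacher--Krein Criterion.

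The main obstacle is the total nonnegativity in the converse: everything hinges on the existence and positivity of the bidiagonal factors. The two auxiliary facts I must secure are (a) that positivity of the $\Delta_k$ genuinely produces the $LU$ factorization with the stated positive entries — this is precisely where hypothesis (2) enters, and follows from reading off the factor entries through the tridiagonal recurrence — and (b) that a nonnegative bidiagonal matrix is totally nonnegative, which I would justify by writing each such factor as a nonnegative diagonal matrix times a product of elementary nonnegative bidiagonal matrices, all manifestly totally nonnegative. With these in hand the Cauchy--Binet closure property finishes the argument.
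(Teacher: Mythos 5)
The paper does not prove this statement: it is quoted as a classical result with citations to Gantmacher and to Gantmacher--Krein, so there is no in-house proof to compare against. Your reconstruction is correct and self-contained given the Gantmacher--Krein Criterion (Theorem~\ref{teo:Gantmacher--Krein Criterion}), and it is worth noting that both halves of your argument are exactly the techniques the paper deploys elsewhere for neighbouring statements. Your forward direction (smallest vanishing minor forces $\Delta_{k+1}=-\mathscr l_{k+1}s_k\Delta_{k-1}<0$, contradicting total nonnegativity) is the same contradiction used in part vi) of Theorem~\ref{teo:firstLU} to show $\Delta_{n,1}>0$; your converse (positive leading principal minors yield an $LU$ factorization into nonnegative bidiagonal factors, whence total nonnegativity by Cauchy--Binet, whence oscillatory by the Criterion) is the mechanism of Proposition~\ref{pro:Jacobi_PBF}, and in fact your converse proves slightly more than needed, namely that conditions (1)--(2) already produce a PBF of the tridiagonal matrix. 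The only points to make sure are nailed down are the two auxiliary facts you flag yourself: that the $LU$ entries are the stated ratios of consecutive minors (a direct check against the three-term recurrence, as you indicate) and that a nonnegative bidiagonal matrix is totally nonnegative (immediate, since every minor of a bidiagonal matrix is either zero or a product of entries). With those, the argument is complete and consistent with the cited sources.
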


If the matrix $J$ is bounded, i.e. $\|J\|_\infty <\infty$, all the possible eigenvalues of the submatrices $J^{[N]}$ belong to the disk $D(0,\|J\|_\infty)$.
As all the eigenvalues are real, let us consider those that are negative, and let $b$ be the supreme of the absolute values of all negative eigenvalues. Notice that $b \leqslant \|J\|_\infty$. 
\begin{teo}
	For $s\geqslant b$ the matrix $J_s=J+sI$ is oscillatory.
\end{teo}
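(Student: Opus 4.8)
The plan is to verify the two conditions of Theorem~\ref{teo:Jacobi oscillatory} for every finite truncation $(J_s)^{[N]}=J^{[N]}+sI$, so that $J_s$ is oscillatory in the sense that all its leading principal submatrices are. The first condition is immediate: adding $sI$ affects only the main diagonal, so the subdiagonal entries of $J_s$ remain $\mathscr l_j>0$ and the superdiagonal entries remain $1>0$. The entire content therefore lies in the second condition, namely that all leading principal minors $\det\bigl(J^{[k]}+sI\bigr)$, $k=0,1,2,\dots$, are strictly positive.

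First I would record that each $J^{[k]}$ is a tridiagonal matrix with positive subdiagonal $\mathscr l_j$ and superdiagonal $1$; conjugating by a suitable diagonal matrix symmetrizes it (the $(j,j+1)$ and $(j+1,j)$ entries becoming $\sqrt{\mathscr l_{j+1}}$), so its spectrum is real and simple. Denoting the eigenvalues of $J^{[k]}$ by $\lambda^{(k)}_1\leqslant\cdots\leqslant\lambda^{(k)}_{k+1}$, I can then write
\begin{align*}
 \det\bigl(J^{[k]}+sI\bigr)=\prod_{i}\bigl(\lambda^{(k)}_i+s\bigr),
\end{align*}
so positivity of the minor reduces to the single bound $\lambda^{(k)}_1+s>0$ for the smallest eigenvalue.

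The key step is the strict estimate $\lambda^{(k)}_1>-b$ valid for every finite $k$. Since $J^{[k]}$ is the leading principal submatrix of $J^{[k+1]}$, and both are similar to symmetric tridiagonal matrices with nonzero off-diagonal entries, the Cauchy interlacing theorem applies in its strict form and gives $\lambda^{(k+1)}_1<\lambda^{(k)}_1$. Moreover $\lambda^{(k+1)}_1\geqslant -b$: if $\lambda^{(k+1)}_1<0$ it is a negative eigenvalue of a truncation, so by the very definition of $b$ one has $-\lambda^{(k+1)}_1\leqslant b$, while if $\lambda^{(k+1)}_1\geqslant 0$ the inequality is trivial because $b\geqslant 0$. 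Combining, $\lambda^{(k)}_1>\lambda^{(k+1)}_1\geqslant -b$. Hence for $s\geqslant b$ every eigenvalue satisfies $\lambda^{(k)}_i+s\geqslant\lambda^{(k)}_1+s>-b+s\geqslant 0$, so each minor $\det\bigl(J^{[k]}+sI\bigr)$ is positive, and Theorem~\ref{teo:Jacobi oscillatory} yields that $J_s$ is oscillatory.

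The main obstacle is precisely obtaining the \emph{strict} inequality at the boundary value $s=b$: the defining supremum gives only $\lambda^{(k)}_1\geqslant -b$, which alone would permit a vanishing minor there. The device that removes this is to compare $J^{[k]}$ with the strictly larger truncation $J^{[k+1]}$ and to exploit strict interlacing together with the fact that the supremum is taken over \emph{all} truncations, so that the value $-b$ is approached but never attained at any finite level. Boundedness of $J$ enters only to guarantee $b\leqslant\|J\|_\infty<\infty$, so that the shift $s\geqslant b$ is by a finite amount and the statement is nonvacuous.
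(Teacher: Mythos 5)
Your proof is correct and follows essentially the same route as the paper's: both reduce the claim to the two conditions of Theorem~\ref{teo:Jacobi oscillatory}, observe that the shift leaves the sub- and superdiagonal entries untouched, and obtain positivity of the leading principal minors from positivity of the shifted spectrum of each truncation. Your strict-interlacing argument at the endpoint $s=b$ supplies a detail the paper's proof asserts without justification (namely that $-s$ is never actually a root of $P_{N+1}$ at any finite truncation), which is a worthwhile refinement rather than a different approach.
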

\begin{proof}
	Take $s\geqslant b$, then $J_s$ has the eigenvalues of its leading principal submatrices $J^{[N]}_s=J^{[N]}+sI_{N+1}$ all positive. The corresponding characteristic polynomials are $P_{N+1}(x-s)= \det\big( xI_{N+1}-J_s^{[N]}\big)$, so that $\det J_s^{[N]}=(-1)^{N+1}P_{N+1}(-s)$, but as $-s$ is a lower bound for any possible zero of this monic polynomial, we have that $(-1)^{N+1}P_{N+1}(-s)>0$. Hence, the leading principal minors of $J_s$ are all positive and the entries on the subdiagonal a superdiagonal are positive. Thus we conclude, attending to Theorem \ref{teo:Jacobi oscillatory}, that $J_s$ is an oscillatory matrix.
\end{proof}

A very important consequence of this fact, i.e.,
that there exists a positive $s$ such that $J+sI$ is oscillatory 
is that all eigenvalues are simple, and that $P_{N+1}$ interlaces $P_{N}$ and $P^{(1)}_{N+1}$. Indeed, the characteristic polynomial $P_{N+1}(x-s)$ of the oscillatory matrix $J^{[N]}_s$ interlaces the characteristic polynomials of the submatrices $J^{[N]}_s(1)=J^{[N,1]}_s$, i.e. $P^{(1)}_{N+1}(x-s)$, and of $J^{[N]}_s(N+1)=J^{[N-1]}_s$, i.e. $P_N(x-s)$. 
Hence, 
we deduce the positivity of the Christoffel coefficients from the oscillatory character $J^{[N]}_s$. Thus, the spectral Favard theorem, see for example \cite[\S 4.1]{Simon}, is a theorem for bounded Jacobi matrices which are oscillatory up to a appropriate shift. Then, all the interlacing properties follow immediately.

Let us show that for Jacobi matrices the PBF and oscillatory properties are equivalent. 
\begin{pro}\label{pro:Jacobi_PBF}
	A Jacobi matrix is oscillatory if and only if it admits a PBF. 
\end{pro}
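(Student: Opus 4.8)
The plan is to reduce both implications to the oscillatory characterization of Theorem~\ref{teo:Jacobi oscillatory}, by relating the factorization data to the leading principal minors $\Delta_N$. For a tridiagonal matrix the relevant PBF is the two-factor form $J=LU$ in which $L$ is unit lower bidiagonal, with subdiagonal entries $\beta_1,\beta_2,\dots$, and $U$ is upper bidiagonal, with diagonal entries $\gamma_0,\gamma_1,\dots$ and superdiagonal entries equal to $1$; positivity of the PBF then means $\beta_n>0$ and $\gamma_n>0$. Multiplying the two factors produces the entrywise identities
\begin{align*}
 \mathscr{m}_0&=\gamma_0, & \mathscr{m}_n&=\beta_n+\gamma_n, & \mathscr{l}_n&=\beta_n\gamma_{n-1}, & n&\geqslant 1,
\end{align*}
and this is the system I would solve in each direction.

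First I would show that a PBF forces the matrix to be oscillatory. Since $L$ is lower triangular and $U$ upper triangular, truncation commutes with the product, $J^{[N]}=L^{[N]}U^{[N]}$, whence $\Delta_N=\det L^{[N]}\det U^{[N]}=\gamma_0\gamma_1\cdots\gamma_N>0$, so all leading principal minors are positive. The superdiagonal entries are $1>0$ and the subdiagonal entries are $\mathscr{l}_n=\beta_n\gamma_{n-1}>0$; thus both hypotheses of Theorem~\ref{teo:Jacobi oscillatory} hold and $J$ is oscillatory.

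Conversely, suppose $J$ is oscillatory. By Theorem~\ref{teo:Jacobi oscillatory} we have $\mathscr{l}_n>0$ and $\Delta_N>0$ for every $N$; set $\Delta_{-1}=1$. I would then define the pivots and multipliers by
\begin{align*}
 \gamma_n&=\frac{\Delta_n}{\Delta_{n-1}}>0, & \beta_n&=\frac{\mathscr{l}_n}{\gamma_{n-1}}=\frac{\mathscr{l}_n\,\Delta_{n-2}}{\Delta_{n-1}}>0,
\end{align*}
so positivity is automatic and only the identity $\beta_n+\gamma_n=\mathscr{m}_n$ remains. This is precisely the cofactor recurrence $\Delta_n=\mathscr{m}_n\Delta_{n-1}-\mathscr{l}_n\Delta_{n-2}$ for the tridiagonal determinants, read off by expanding $\det J^{[n]}$ along its last row, and it delivers the factorization $J=LU$ with all data positive, that is, a PBF.

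The step requiring the most care is not analytic but bookkeeping: one must fix the convention $\Delta_{-1}=1$, verify the determinant recurrence with the correct sign, and observe that the defining relations for $\gamma_n,\beta_n$ are local, so that $L$ and $U$ are genuinely well-defined semi-infinite matrices whose finite truncations reproduce the $J^{[N]}$. The only conceptual input is the standard correspondence between Gaussian elimination without pivoting and the leading principal minors, specialized to the tridiagonal band; with that in hand, the equivalence of the two lists of positivity conditions, and hence of \emph{oscillatory} and \emph{admits a PBF}, is immediate.
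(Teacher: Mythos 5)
Your proposal is correct and follows essentially the same route as the paper: both directions reduce to the Gauss--Borel ($LU$) factorization into two bidiagonal factors, with the pivots identified as ratios of consecutive leading principal minors and positivity read off from the oscillatory characterization of tridiagonal matrices. The only (immaterial) difference is that you invoke Theorem~\ref{teo:Jacobi oscillatory} for the direction ``PBF $\Rightarrow$ oscillatory'' where the paper cites the Gantmacher--Krein criterion, and you spell out the determinant recurrence $\Delta_n=\mathscr m_n\Delta_{n-1}-\mathscr l_n\Delta_{n-2}$ that the paper leaves implicit.
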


\begin{proof}
	Let us assume that the Jacobi matrix $J^{[N,1]}$ is oscillatory. Then, the Gauss--Borel factorization of $J^{[N,1]}$, i.e.
	\begin{align*}
		\left[\begin{NiceMatrix}[columns-width = auto]
			\mathscr m_1& 1 &0&\Cdots&&0\\
			\mathscr l_2 &\mathscr m_2& 1& \Ddots&&\Vdots\\
			0&\mathscr l_3&\mathscr m_3&1&&\\
			\Vdots& \Ddots& \Ddots & \Ddots&\Ddots&0\\
			&&&&&1\\
			0 &\Cdots&&0&\mathscr l_N&\mathscr m_N
		\end{NiceMatrix}\right]= \begin{bNiceMatrix}
			1 &0&\Cdots&&0\\
			\beta_{2} &\Ddots &\Ddots&&\\
			0& \beta_{4} & & &\Vdots\\
			\Vdots& \Ddots& \Ddots & \Ddots&0\\
			0&\Cdots&0&\beta_{2N-2}&1
		\end{bNiceMatrix} \left[\begin{NiceMatrix}
			\beta_1 & 1 &0&\Cdots&0\\
			0& \beta_3 & \Ddots &\Ddots&\Vdots\\
			\Vdots&\Ddots&\beta_5&&0\\
			& & \Ddots &\Ddots &1\\
			0 &\Cdots&&0&\beta_{2N-1}
		\end{NiceMatrix}\right],
	\end{align*}
	leads to $\mathscr m_1=\beta_1$ as well as 
	$\mathscr m_n=\beta_{2n-2}+\beta_{2n-1}$ and $\mathscr l_n=\beta_{2n-2}\beta_{2n-3}$, $\beta_0\coloneq 0$.
	Hence, as $\mathscr l_n>0$, $n\in\{2,3,\dots\}$, we get that $\beta_n>0$ for $n\in\N$. 
	
	If the Jacobi matrix $J^{[N,1]}$ admits a PBF, we deduce that is oscillatory from the Gatmancher--Krein Criterion Theorem \ref{teo:Gantmacher--Krein Criterion}.
	%
\end{proof}


The structure of the paper is as follows. Section~\ref{sec:2} is devoted to introduce truncations and continued fractions needed for the discussion.
Next, in \S~\ref{sec:3} we prove,
in the finite case, the existence of the PBF for tetradiagonal matrices in terms of positive finite continued fractions.
Then, the extension of this PBF for the semi-infinite case is shown to happen when certain nonnegative infinite continued fraction is indeed positive.
In \S~\ref{sec:4} we discuss oscillatory tetradiagonal Toeplitz--Hessenberg matrices as a case study of matrix that admits a PBF.
Finally, in \S~\ref{sec:5} we show that from any given oscillatory tetradiagonal matrix we can construct tetradiagonal matrices with a PBF in several ways. Also, we will find that oscillatory matrices are organized in rays, the origin of the ray is a oscillatory matrix that don't have a PBF and all the interior points of the ray are PBF matrices. We also show that for any PBF tetradiagonal matrix there is a retraction that is oscillatory but without a PBF and vice~versa.

\section{Factorization properties} \label{sec:2}

We now discuss some aspects of the Gauss--Borel factorization and PBF of oscillatory tetradiagonal matrices. For more on Gauss--Borel factorization and orthogonal polynomials see for example~\cite{manas} and references therein.

First, let us introduce some convenient notation for the tetradiagonal case.
Let us denote by $ T^{[N]}=T[\{0,1,\dots,N\}]\in\R^{(N+1)\times(N+1)}$ the $(N+1)$-th leading principal submatrix of the banded Hessenberg matrix $T$: 
\begin{align}\label{eq:Hessenberg_truncation}
	T^{[N]}&\coloneq \begin{bNiceMatrix}[columns-width = auto]
		c_0 & 1&0&\Cdots&&&0\\[-3pt]
		b_1&c_1&1&\Ddots&&&\Vdots\\
		a_2&b_2&c_2&1&&&\\
		0&a_3&b_3&c_3&1&&\\
		\Vdots&\Ddots&\Ddots&\Ddots&\Ddots& \Ddots&0\\
		&&&a_{N-1}&b_{N-1}&c_{N-1}&1\\
		0&\Cdots&&0&a_{N}&b_{N}&c_N
	\end{bNiceMatrix}, & \delta^{[N]}&\coloneq \det T^{[N]}.
\end{align}

Further truncations are
\begin{align*}
	T^{[N,k]} &\coloneq
	\begin{bNiceMatrix}[columns-width = 10pt]
		c_k & 1&0&\Cdots&&&0\\[-3pt]
		b_{k+1}&c_{k+1}&1&\Ddots&&&\Vdots\\
		a_{k+2}&b_{k+2}&c_{k+2}&1&&&\\
		0&a_{k+3}&b_{k+3}&c_{k+3}&1&&\\
		\Vdots&\Ddots&\Ddots&\Ddots&\Ddots& \Ddots&0\\
		&&&a_{N-1}&b_{N-1}&c_{N-1}&1\\
		0&\Cdots&&0&a_{N}&b_{N}&c_N
	\end{bNiceMatrix}
	\in\R^{(N+1-k)\times(N+1-k)}, \ k \in\{0,1,\dots,N\},
\end{align*}
notice that 
$T^{[N,N+1]}\coloneq 1$ and $T^{[N]}=T^{[N,0]}$.


The Gauss--Borel factorization of the leading principal  submatrices $T^{[N]}$ in \eqref{eq:Hessenberg_truncation} is
 \begin{align}\label{eq:Gauss-Borel}
 T^{[N]}&=L^{[N]} U^{[N]}, &
 L^{[N]}&\coloneq\left[\begin{NiceMatrix}[columns-width = auto]
 1 &0&\Cdots&&&0\\
 \mathscr m_1& 1 &\Ddots&&&\Vdots\\
 \mathscr l_2 &\mathscr m_2& 1& &&\\
 0&\mathscr l_3&\mathscr m_3&1&&\\
 \Vdots& \Ddots& \Ddots & \Ddots&\Ddots&0\\
 0 &\Cdots&0&\mathscr l_N&\mathscr m_N&1
 \end{NiceMatrix}\right], & U^{[N]}& \coloneq
 \left[\begin{NiceMatrix}
 \alpha_1 & 1 &0&\Cdots&0\\
 0& \alpha_4 & \Ddots &\Ddots&\Vdots\\
 \Vdots&\Ddots&\alpha_7&&0\\
 & & \Ddots &\Ddots &1\\
 0 &\Cdots&&0&\alpha_{3N+1}
 \end{NiceMatrix}\right].
 \end{align}
 
 \begin{pro}
The Gauss--Borel factorization \eqref{eq:Gauss-Borel} exists if and only if all leading principal minors $\delta^{[n]}$, $n\in\{0,1,\dots,N\}$, of~$T^{[N]}$ are not zero.
For $n\in\N$, the following expressions for the coefficients hold
\begin{align}\label{eq:gauss_borel_a_b_alpha}
 \mathscr l_{n+1}&=\frac{a_{n+1}\delta^{[n-2]}}{\delta^{[n-1]}},&
 \mathscr m_n&=c_n-\frac{\delta^{[n]}}{\delta^{[n-1]}},&
 \alpha_{3n-2}&=\frac{\delta^{[n-1]}}{\delta^{[n-2]}},
 \end{align}
where $\delta^{[-1]}=1$ and $a_1=0$, and we have the following recurrence relation for the determinants
\begin{align}\label{eq:recurrence_delta}
 \delta^{[n]}= a_n\delta^{[n-3]}-b_n\delta^{[n-2]}+c_n\delta^{[n-1]},
\end{align}
is satisfied.

 \end{pro}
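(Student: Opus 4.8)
The plan is to extract every assertion from two independent computations: an entrywise expansion of the product $L^{[N]}U^{[N]}$, and a cofactor expansion of $\delta^{[n]}=\det T^{[n]}$.

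First I would multiply the two factors in \eqref{eq:Gauss-Borel} and compare with \eqref{eq:Hessenberg_truncation} diagonal by diagonal, indexing rows from $0$ to $N$. The superdiagonal of the product is identically $1$, confirming that the tetradiagonal shape is preserved, while the main diagonal, the first subdiagonal and the second subdiagonal yield, for each $n$, the three scalar identities $\alpha_{3n+1}+\mathscr m_n=c_n$, $\mathscr l_n+\mathscr m_n\alpha_{3n-2}=b_n$ and $\mathscr l_n\alpha_{3n-5}=a_n$ (with the convention that any coefficient whose index falls outside its natural range is omitted, so that in the first rows $\mathscr m_0$, $\mathscr l_1$ and the $\alpha$'s of nonpositive index are absent). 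These are exactly the relations to be inverted.

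Next I would pin down the diagonal of $U^{[N]}$ via determinants. Because $L^{[N]}$ is lower triangular and $U^{[N]}$ is upper triangular, the top-left $(n+1)\times(n+1)$ block of the product equals the product of the corresponding blocks, so the factorization restricts to $T^{[n]}=L^{[n]}U^{[n]}$ for every $n\leqslant N$. Taking determinants and using $\det L^{[n]}=1$ gives $\delta^{[n]}=\prod_{k=0}^{n}\alpha_{3k+1}$, whence $\alpha_{3n+1}=\delta^{[n]}/\delta^{[n-1]}$, equivalently $\alpha_{3n-2}=\delta^{[n-1]}/\delta^{[n-2]}$, with $\delta^{[-1]}=1$. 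Substituting this into the diagonal identity yields $\mathscr m_n=c_n-\delta^{[n]}/\delta^{[n-1]}$, and into the second-subdiagonal identity yields $\mathscr l_n=a_n/\alpha_{3n-5}=a_n\delta^{[n-3]}/\delta^{[n-2]}$, which is \eqref{eq:gauss_borel_a_b_alpha} after the shift $n\mapsto n+1$. Reading the identities as a recursion, at step $n$ one must divide by $\alpha_{3n-2}$ and $\alpha_{3n-5}$, so the recursion runs to completion precisely when no divisor vanishes, that is, when $\delta^{[0]},\dots,\delta^{[N-1]}\neq 0$; the remaining requirement $\delta^{[N]}\neq 0$ is equivalent to $\alpha_{3N+1}\neq 0$, i.e. to the invertibility of $U^{[N]}$, and hence of $T^{[N]}$, which matches the stated condition that all $\delta^{[n]}$ be nonzero for $n\in\{0,\dots,N\}$. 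Uniqueness is forced by the triangular shape, since each coefficient is determined by the identities above. For the determinant recurrence \eqref{eq:recurrence_delta} I would expand $\delta^{[n]}$ along its last row $(0,\dots,0,a_n,b_n,c_n)$: the cofactor of $c_n$ is $\delta^{[n-1]}$ outright, while for $b_n$ and $a_n$ I would use that the only entry of $T^{[n]}$ above the diagonal in the last column is the single superdiagonal $1$, so a secondary Laplace expansion of each minor along that sparse column collapses it to $\delta^{[n-2]}$ and $\delta^{[n-3]}$; tracking the signs $(-1)^{i+j}$ from the two expansions produces the coefficients $+c_n$, $-b_n$, $+a_n$.

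The routine parts are the two multiplications and the solving of a triangular recursion; the one place demanding care is the sign and index bookkeeping in the double cofactor expansion for the determinant recurrence, together with the clean separation between the minors $\delta^{[0]},\dots,\delta^{[N-1]}$ that control solvability of the recursion and the final minor $\delta^{[N]}$ that controls invertibility.
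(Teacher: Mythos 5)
Your proposal is correct and follows essentially the same route as the paper: reading off the three scalar identities from the product $L^{[N]}U^{[N]}$ and identifying $\alpha_{3n+1}=\delta^{[n]}/\delta^{[n-1]}$ via $\det U^{[n]}$. The only divergence is that you obtain the recurrence \eqref{eq:recurrence_delta} by a double cofactor expansion along the last row rather than by substituting the coefficient formulas back into the subdiagonal identity, and the paper itself remarks immediately after its proof that this expansion is an equivalent way to get \eqref{eq:recurrence_delta}.
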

 \begin{proof}
 Notice that $\delta^{[N]}=\det T^{[N]}=\det U^{[N]}=\alpha_1\alpha_4\cdots \alpha_{3N+1}$. Hence, we get $\alpha_{3N+1}=\frac{\delta^{[N]}}{\delta^{[N-1]}}$. 
 From the last row of the $LU$ factorization we get
 \begin{align*}
 a_N&=\mathscr l_N\alpha_{3N-5}, &
 b_N&=\mathscr l_N+\mathscr m_N\alpha_{3N-2},&
 c_N&=\mathscr m_N+\alpha_{3N+1}, 
 \end{align*}
so that
 \begin{align*}
\mathscr l_N&=\frac{\delta^{[N-3]}}{\delta^{[N-2]}} a_N, &
\mathscr m_N&=c_N-\frac{\delta^{[N]}}{\delta^{[N-1]}}, 
\end{align*}
and
\begin{align*}
b_N-a_N\frac{\delta^{[N-3]}}{\delta^{[N-2]}} -\frac{\delta^{[N-1]}}{\delta^{[N-2]}}\Big(c_N-\frac{\delta^{[N]}}{\delta^{[N-1]}}\Big)=0,
\end{align*}
that is we have \eqref{eq:recurrence_delta}.
\end{proof}

Note that, in this proof, we could get \eqref{eq:recurrence_delta} by expanding the determinant $\delta^{[N]}$ along the last~row.

\begin{pro}
 \label{pro:abc>0}
 Let us assume that $T^{[N]}$ given in \eqref{eq:Hessenberg_truncation} is an oscillatory matrix.
 Then,
 \begin{align*}
 a_n,b_n,c_n>0.
 \end{align*}
 \end{pro}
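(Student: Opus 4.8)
The plan is to read off all three inequalities from the two defining features of an oscillatory matrix: that it is totally nonnegative, so every minor --- in particular every entry, viewed as a $1\times1$ minor --- is nonnegative, and that it obeys the Gantmacher--Krein Criterion of Theorem~\ref{teo:Gantmacher--Krein Criterion}. Since $a_n>0$ is the standing hypothesis imposed on $T$ in \eqref{eq:monic_Hessenberg}, the substantive content of the statement is the positivity of $b_n$ and $c_n$.

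First I would settle the subdiagonal. The entries $b_1,\dots,b_N$ occupy precisely the first subdiagonal of $T^{[N]}$, while the first superdiagonal consists of the $1$'s displayed in \eqref{eq:Hessenberg_truncation}; hence Theorem~\ref{teo:Gantmacher--Krein Criterion}, applied to the oscillatory matrix $T^{[N]}$, delivers $b_n>0$ for every $n$ simultaneously. At the same time, total nonnegativity gives the weak bound $c_n=\big(T^{[N]}\big)_{n,n}\geqslant0$, each $c_n$ being a $1\times1$ minor.

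The heart of the proof is upgrading this to the strict inequality $c_n>0$, and for that I would invoke a single family of $2\times2$ minors. For $0\leqslant i\leqslant N-1$ the submatrix of $T^{[N]}$ supported on rows and columns $\{i,i+1\}$ has determinant
\begin{align*}
 \begin{vmatrix} c_i & 1\\ b_{i+1} & c_{i+1}\end{vmatrix}=c_i\,c_{i+1}-b_{i+1}.
\end{align*}
Total nonnegativity forces this minor to be nonnegative, so $c_i\,c_{i+1}\geqslant b_{i+1}>0$ by the preceding step. A product of two nonnegative numbers that is strictly positive has both factors strictly positive, so $c_i>0$ and $c_{i+1}>0$; letting $i$ range over $0,\dots,N-1$ exhausts every index of $\{0,\dots,N\}$ and yields $c_n>0$ throughout.

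I expect the only delicate point to be this last step: bare total nonnegativity does not rule out a vanishing diagonal entry on its own, and it is the coupling between the minor inequality $c_i c_{i+1}\geqslant b_{i+1}$ and the already secured $b_{i+1}>0$ that promotes $c_n\geqslant0$ to $c_n>0$. It is worth stressing, by contrast, that $a_n>0$ cannot be extracted from the oscillatory hypothesis alone --- a purely tridiagonal (Jacobi) matrix is the tetradiagonal shape with all $a_n=0$ and may perfectly well be oscillatory, irreducibility being already guaranteed by the positive first subdiagonal --- which is exactly why $a_n>0$ must be carried along as a standing assumption.
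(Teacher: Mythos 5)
Your proof is correct, and for the half of the statement that carries real content it takes a different route from the paper. The treatment of $b_n$ is identical: both you and the authors read $b_n>0$ directly off the Gantmacher--Krein Criterion, Theorem~\ref{teo:Gantmacher--Krein Criterion}, since the $b_n$ sit on the first subdiagonal and the first superdiagonal consists of $1$'s. For $c_n>0$, however, the paper simply cites the known fact that an invertible totally nonnegative matrix has positive diagonal entries (referring to Fallat--Johnson, Chapter~2), whereas you rederive what is needed from scratch: the contiguous principal $2\times2$ minor on rows and columns $\{i,i+1\}$ gives $c_i c_{i+1}-b_{i+1}\geqslant 0$, and combined with the already established $b_{i+1}>0$ and the $1\times1$-minor bound $c_n\geqslant 0$ this forces both factors to be positive. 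Your version is more elementary and self-contained, at the price of making $c_n>0$ logically dependent on the prior step $b_n>0$; the paper's citation-based argument gets $c_n>0$ from invertibility and total nonnegativity alone. Your closing remark that $a_n>0$ cannot be deduced from the oscillatory hypothesis (witness an oscillatory Jacobi matrix, where all $a_n=0$) and must be carried as the standing assumption from \eqref{eq:monic_Hessenberg} is also accurate and matches the paper's implicit treatment.
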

 \begin{proof}
 According to the Gantmacher--Krein Criterion, Theorem \ref{teo:Gantmacher--Krein Criterion}, \cite[II.7 Theorem 10]{Gantmacher-Krein}, $b_n>0$ for $n\in\{1,\dots,N\}$. Moreover, as is an invertible totally nonnegative matrix (InTN) according to \cite[page 50, Chapter 2]{Fallat-Johnson} we also need $c_n$ to be positive.
 \end{proof}

 We introduce some auxiliary submatrices that will be instrumental in the following developments.
 
\begin{defi}[Auxiliary submatrices]\label{def:associated_submatrices}
 Given the lower triangular factor $L^{[N]}$, determined by the Gauss--Borel factorization \eqref{eq:Gauss-Borel}, we consider its complementary submatrix, by deleting first row and last column, 
 that we call the auxiliary Jacobi matrix, $J^{[N,1]}=L^{[N]}(\{1\},\{N+1\})\in\R^{N\times N}$ as in \eqref{eq:Nk_Jacobi} with $k=1$.
For $k\in\{0,\dots,N-1\}$, associated with the auxiliary Jacobi matrix $ J^{[N,1]}$ we introduce, as we did with the banded Hessenberg matrix $T^{[N]}$, the principal submatrices $J^{[N,k+1]}$ defined in \eqref{eq:Nk_Jacobi}.
Additionally, we introduce $T^{[N]}_1=T(\{1\},\{N+1\})\in\R^{N\times N}$ as the complementary submatrix obtained by removing the first row and last column of $T^{[N]}$, that is
\begin{align}\label{eq:TN1}
 T^{[N]}_1&\coloneq 
 \begin{bNiceMatrix}[columns-width = .7cm]
 b_{1}&c_{1}&1 &0&\Cdots&&0\\
 a_{2}&b_{2}&c_{2}&1& \Ddots &&\Vdots\\
 0 &a_{3}&b_{3}&c_{3} &1&&\\
 \Vdots&\Ddots&\Ddots&\Ddots&\Ddots& \Ddots&0\\
 &&&&&&1\\
 &&&&a_{N-1}&b_{N-1}&c_{N-1}\\
 0&\Cdots&&&0&a_{N}&b_{N}
 \end{bNiceMatrix}, & 
 \delta^{[N]}_1&\coloneq \det T^{[N]}_1.
\end{align}
Further auxiliary complementary submatrices that we will consider are
\begin{align}\label{eq:TN1k}
 T^{[N,k]}_1&\coloneq \begin{bNiceMatrix}
 b_{k+1}&c_{k+1}&1 &0&\Cdots&&0\\
 a_{k+2}&b_{k+2}&c_{k+2}&1& \Ddots &&\Vdots\\
 0 &a_{k+3}&b_{k+3}&c_{k+3} &1&&\\
 \Vdots&\Ddots&\Ddots&\Ddots&\Ddots& \Ddots&0\\
 &&&&&&1\\
 &&&&a_{N-1}&b_{N-1}&c_{N-1}\\
 0&\Cdots&&&0&a_{N}&b_{N}
 \end{bNiceMatrix},& \delta_1^{[N,k]}&\coloneq \det T^{[N,k]}_1,
\end{align}
so that $T^{[N,0]}_1=T^{[N]}_1$ and $\delta^{[N,0]}_{1}=\delta^{[N]}_{1}$, 
and the upper bidiagonal matrix
\begin{align*}
 U^{[N-1,k]}& =
 \left[\begin{NiceMatrix}[columns-width = auto]
 \alpha_{3k+1} & 1 &0&\Cdots&0\\
 0& \alpha_{3k+4} & \Ddots &\Ddots&\Vdots\\
 \Vdots&\Ddots&\alpha_{3k+7}&&0\\
 & & \Ddots &\Ddots &1\\
 0 &\Cdots&&0&\alpha_{3N-2}
 \end{NiceMatrix}\right].
\end{align*}
\end{defi}

A set of finite continued fractions will be needed in the subsequent analysis. For continued fractions we refer to the interested reader to \cite{Jones,Lorentzen,Wall}.

\begin{defi}[Finite continued fractions]\label{def:finite_contnued_ractions}
 We introduce
 \begin{align*}
 \gaussk[n,k]&\coloneq \mathscr m_{k}-\cfrac{\mathscr l_{k+1}}{\mathscr m_{k+1}-\cfrac{\mathscr l_{k+2}}{\mathscr m_{k+2}-\cfrac{\mathscr l_{k+3}}{\llap{\ensuremath{\ddots}}
 \raisebox{-0.8em}{\ensuremath{\mathscr m_{n-1}-\cfrac{\mathscr l_n}{\mathscr m_n}}}}}}, & n&\in\{k+1,k+2,\dots\}, & \gaussk[k+1,k]&\coloneq\mathscr m_{k} .
 \end{align*}
\end{defi}

We now collect together a number of results regarding factorizations, determinants and relations of the introduced matrices.
The next results are relevant in subsequent developments.

\begin{teo}[Determinants and continued fractions]\label{teo:firstLU}
 Let us assume that $T^{[N]}$ as in 
\eqref{eq:Hessenberg_truncation} is an oscillatory matrix. Then, the following holds:
 \begin{enumerate}
 \item For the triangular factors in \eqref{eq:Gauss-Borel} we have $L^{[N]}, U^{[N]}\in \operatorname{InTN}$. 
 \item The matrix entries of the triangular factors of Gauss--Borel factorization \eqref{eq:Gauss-Borel} of the Hessenberg matrix $T^{[N]}$ are positive: 
$\mathscr l_2,\dots,\mathscr l_N,\mathscr m_1,\dots,\mathscr m_N,\alpha_1,\alpha_4,\dots,\alpha_{3N-2}>0 $.

 \item 
 For $k\in\N$, the recurrence relation
 \begin{align}\label{eq:recurrence_Delta}
 D(n+1)&=\mathscr m_{k+n} D(n)-\mathscr l _{k+n}D(n-1), & n&\in\N,
 \end{align}
 for the initial conditions $D(0)=1,D(1)=\mathscr m_{k}$, 
 has as solution $D(n)=\Delta_{k+n-1,k}$, while for the initial conditions $ D(0)=0$ and $ D(1)=1$ has as solution $\Delta_{k+n-1,k+1}$. The determinants $\Delta_{N,k}$ were defined in \eqref{eq:Nk_Jacobi}.
%
%
 \item The ratio of consecutive determinants are bounded as follows: \begin{align*}
 \dfrac{a_{n+2}}{b_{n+2}}&<\dfrac{\delta^{[n]}}{\delta^{[n-1]}}<c_n,& \dfrac{\mathscr l_{n+1}}{\mathscr m_{n+1}}&<\dfrac{\Delta_{n,1}}{\Delta_{n-1,1}}<\mathscr m_n.
 \end{align*}

\item For $k\in\N$, the continued fraction given in Definition \ref{def:finite_contnued_ractions} is the ratio of the consecutive determinants defined in Equation \eqref{eq:Nk_Jacobi}
\begin{align*}
 \gaussk[n,k]=\frac{\Delta_{n,k}}{\Delta_{n,k+1}}.
\end{align*}
 \item For $k=1$ the determinants in \eqref{eq:Nk_Jacobi} are positive, i.e. $\Delta_{n,1}>0$.
 \end{enumerate}
\end{teo}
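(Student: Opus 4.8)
The plan is to prove the six items in the order (i), (ii), (iii), (v), (vi), (iv), since the determinantal identities and the positivity of $\Delta_{n,1}$ feed the bounds. Since $T^{[N]}$ is oscillatory it is nonsingular and totally nonnegative, hence belongs to $\operatorname{InTN}$; in particular all its leading principal minors are positive, $\delta^{[n]}>0$. For (i) I would invoke the structure theorem for $\operatorname{InTN}$ matrices, whose (unique) $LU$ factors are again $\operatorname{InTN}$ \cite{Fallat-Johnson}, and identify these factors with $L^{[N]},U^{[N]}$ by uniqueness of the Gauss--Borel factorization \eqref{eq:Gauss-Borel}. For (ii), the formulas \eqref{eq:gauss_borel_a_b_alpha} give $\alpha_{3n-2}=\delta^{[n-1]}/\delta^{[n-2]}>0$ and $\mathscr l_{n+1}=a_{n+1}\delta^{[n-2]}/\delta^{[n-1]}>0$ at once (using $a_{n+1}>0$ from Proposition~\ref{pro:abc>0}). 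To get $\mathscr m_n>0$ I would exploit that $L^{[N]}$ is totally nonnegative by (i): the $2\times2$ minor on rows $\{n,n+1\}$ and columns $\{n-1,n\}$ equals $\mathscr m_n\mathscr m_{n+1}-\mathscr l_{n+1}\geqslant 0$, and since $\mathscr l_{n+1}>0$ this forces $\mathscr m_n>0$; the top index $n=N$ is handled by the minor on rows $\{N-1,N\}$, columns $\{N-2,N-1\}$.

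Item (iii) is the standard three-term recurrence for the tridiagonal determinants $\Delta_{k+n,k}$ of \eqref{eq:Nk_Jacobi}: expanding $\det J^{[k+n,k]}$ along its last row gives $\Delta_{k+n,k}=\mathscr m_{k+n}\Delta_{k+n-1,k}-\mathscr l_{k+n}\Delta_{k+n-2,k}$, and the two stated initial data pick out the two independent solutions once the conventions $\Delta_{k-1,k}=1$, $\Delta_{k-1,k+1}=0$ (empty and size $-1$ determinants) are fixed. For (v) I would instead expand $\det J^{[n,k]}$ along its \emph{first} row, obtaining $\Delta_{n,k}=\mathscr m_k\Delta_{n,k+1}-\mathscr l_{k+1}\Delta_{n,k+2}$, and divide by $\Delta_{n,k+1}$ to get $\Delta_{n,k}/\Delta_{n,k+1}=\mathscr m_k-\mathscr l_{k+1}\big(\Delta_{n,k+1}/\Delta_{n,k+2}\big)^{-1}$. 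An induction on $n-k$, anchored at the one-term value $\mathscr m_n$, then reproduces exactly the nested structure of $\gaussk[n,k]$ in Definition~\ref{def:finite_contnued_ractions}.

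The crux is (vi). I would relate $\Delta_{N,1}$ to the oscillatory matrix itself by deleting the first row and last column in \eqref{eq:Gauss-Borel}: this writes $T^{[N]}_1=\widehat L\,\widehat U$, where $\widehat L$ is $L^{[N]}$ without its first row and $\widehat U$ is $U^{[N]}$ without its last column. Since $\widehat U$ has a vanishing last row (the upper bidiagonal $U^{[N]}$ loses its only nonzero last-row entry with that column), the Cauchy--Binet expansion collapses to a single surviving term, giving $\delta^{[N]}_1=\Delta_{N,1}\,\alpha_1\alpha_4\cdots\alpha_{3N-2}$. As the $\alpha$'s are positive by (ii), positivity of $\Delta_{n,1}$ is equivalent to positivity of the shifted minor $\delta^{[n]}_1=\det T^{[n]}[\{1,\dots,n\},\{0,\dots,n-1\}]$. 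I would deduce the latter from the oscillatory character of $T^{[n]}$: the auxiliary Jacobi matrix $J^{[N,1]}$ is a submatrix of the $\operatorname{InTN}$ matrix $L^{[N]}$, hence totally nonnegative, with positive first sub- and superdiagonals by (ii), so once it is shown nonsingular (equivalently $\delta^{[n]}_1>0$) the Gantmacher--Krein criterion, Theorem~\ref{teo:Gantmacher--Krein Criterion}, makes it oscillatory and all its leading principal minors $\Delta_{n,1}$ positive. I expect the genuine obstacle to be precisely this strict positivity of an off-principal minor of the banded oscillatory $T^{[n]}$: one must argue that shifting the row and column index sets by a single step keeps them within the reach of the nontrivial band, so the minor cannot vanish for an oscillatory matrix.

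With positivity secured, (iv) is short. From $c_n=\mathscr m_n+\alpha_{3n+1}$ and $\alpha_{3n+1}=\delta^{[n]}/\delta^{[n-1]}$ in \eqref{eq:gauss_borel_a_b_alpha}, positivity of $\mathscr m_n$ gives $\delta^{[n]}/\delta^{[n-1]}<c_n$; and from $a_{n+2}=\mathscr l_{n+2}\alpha_{3n+1}$ together with $b_{n+2}=\mathscr l_{n+2}+\mathscr m_{n+2}\alpha_{3n+4}$ one finds $a_{n+2}/b_{n+2}=\alpha_{3n+1}\,\mathscr l_{n+2}/(\mathscr l_{n+2}+\mathscr m_{n+2}\alpha_{3n+4})<\alpha_{3n+1}=\delta^{[n]}/\delta^{[n-1]}$, which is the first pair. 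The second pair is the verbatim analogue applied to the (now oscillatory) Jacobi matrix $J^{[N,1]}$ through its Gauss--Borel factorization, Proposition~\ref{pro:Jacobi_PBF}, with the pivot ratio $\Delta_{n,1}/\Delta_{n-1,1}$ playing the role of $\alpha_{3n+1}$ and the diagonal $\mathscr m_n$ the role of $c_n$.
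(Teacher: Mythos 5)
Your items (i)--(iii) and (v), and the first pair of inequalities in (iv), are correct and essentially the paper's arguments (for (v) the paper invokes the Euler--Wallis theorem where you expand along the first row and induct, which is the same recursion in $k$; your derivation of $a_{n+2}/b_{n+2}<\alpha_{3n+1}$ from the $LU$ entries is a mild variant of the paper's use of $\mathscr m_n>0$ together with \eqref{eq:recurrence_delta}). The genuine gap is in item (vi), which you correctly identify as the crux and then leave unproved. Your plan is to get $\Delta_{n,1}>0$ from $\delta^{[n]}_1>0$ via the Cauchy--Binet identity $\delta^{[n]}_1=\alpha_1\alpha_4\cdots\alpha_{3n-2}\,\Delta_{n,1}$ (which is correct, and is \eqref{eq:DeltaNdelta}), and to deduce $\delta^{[n]}_1>0$ from the oscillatory character of $T^{[n]}$ by a general principle about consecutive, one-step-shifted minors of oscillatory matrices. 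No such principle exists: the matrix $\left[\begin{smallmatrix}2&1&1\\1&1&1\\1&1&2\end{smallmatrix}\right]$ is oscillatory (it is TN, nonsingular, and has positive first sub- and superdiagonals, so the Gantmacher--Krein criterion of Theorem \ref{teo:Gantmacher--Krein Criterion} applies), yet its minor on rows $\{2,3\}$ and columns $\{1,2\}$ vanishes. So positivity of $\delta^{[n]}_1$ requires input beyond ``oscillatory plus a one-step shift,'' and in the paper the logical order is the reverse of yours: $\delta^{[n]}_1>0$ is deduced \emph{from} $\Delta_{n,1}>0$ in Theorem \ref{teo:firstLU2}, not the other way around. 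Since your second pair of inequalities in (iv) needs $J^{[N,1]}$ to be oscillatory, the gap propagates there as well.

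The repair is short and uses only ingredients you already have. The leading principal submatrices of $J^{[N,1]}$ sit inside the InTN factor $L^{[N]}$ on consecutive rows and columns, so every $\Delta_{n,1}$ is a minor of a TN matrix and hence $\Delta_{n,1}\geqslant 0$. Now combine this nonnegativity with the three-term recurrence of item (iii): $\Delta_{1,1}=\mathscr m_1>0$ by (ii), and if $\Delta_{n-1,1}>0$ while $\Delta_{n,1}=0$, then $\Delta_{n+1,1}=-\mathscr l_{n+1}\Delta_{n-1,1}<0$, contradicting nonnegativity; induction gives $\Delta_{n,1}>0$ for all $n$. This is precisely the paper's proof of (vi). With (vi) in place, your argument for the second pair in (iv) via Proposition \ref{pro:Jacobi_PBF} coincides with the paper's.
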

\begin{proof}
 \begin{enumerate}
 \item As $T^{[N]}$ is TN, we know it has a Gauss--Borel factorization with totally nonnegative factors, see \cite[Theorem 2.4.1]{Fallat-Johnson}. 
 \item For $n\in\{2,3\cdots,N\}$ we have $a_n=\mathscr l_n\alpha_{3n-5}$ and, consequently, as $a_n\neq 0$ we deduce that $\mathscr l_2,\dots,\mathscr l_N,\alpha_1,\alpha_4,\dots,\alpha_{3N-2}>0$. Moreover,
 \begin{align*}
 \begin{vNiceMatrix}
 \mathscr m_n &1\\
 \mathscr l_{n+1} &\mathscr m_{n+1}
 \end{vNiceMatrix}\geqslant 0,
 \end{align*}
 and as $\mathscr l_{n+1}>0$ we deduce that $\mathscr m_n\mathscr m_{n+1}\neq 0$, and all $\mathscr m_1,\dots,\mathscr m_N>0$.
 \item Expand the determinant $\Delta_{n,k}$ along the last row. One can check that the initial conditions lead to the sequence of determinants.
 \item From \eqref{eq:gauss_borel_a_b_alpha} and $\mathscr m_n>0$ we get $b_n\delta^{[n-2]}>a_n\delta^{[n-3]}$, $c_n\delta^{[n-1]}>\delta^{[n]}$ and the first inequality follows. For the second we use the proof of Proposition \ref{pro:Jacobi_PBF}.
 From the factorization we get $\Delta_{n,1}=\beta_1\cdots\beta_{2n-1}$ and, consequently, 
 $\beta_{2n-1}=\dfrac{\Delta_{n,1}}{\Delta_{n-1,1}}$ and $\beta_{2n-2}=\mathscr m_n-\dfrac{\Delta_{n,1}}{\Delta_{n-1,1}}$. 
 As $\beta_n>0$, $n\in\N$, we deduce that $\mathscr m_n>\dfrac{\Delta_{n,1}}{\Delta_{n-1,1}}$. As for the oscillatory case we require $\Delta_n>0$, the recursion relation \eqref{eq:recurrence_Delta}, i.e., $\Delta_{n,1}=\mathscr m_n\Delta_{n-1,1}-\mathscr l_{n}\Delta_{n-2,1}$, implies that $\mathscr m_n\Delta_{n-1,1}>\mathscr l_{n}\Delta_{n-2,1}$ and the lower bound follows immediately.
 
 \item Use the Euler--Wallis theorem for continued fractions, see for example \cite[Theorem 9.2]{Elaydi}.
 \item The first two determinants are positive, then we apply induction. Let us assume that $\Delta_{n-1,1}>0,$ and that $\Delta_{n,1}=0$. Then, for $k=0$ Equation \eqref{eq:recurrence_Delta} implies that $\Delta_{n+1,1}=-\mathscr l_{n+1}\Delta_{n-1,1}<0$ in contradiction with the fact $\Delta_{n+1,1}\geqslant 0$.
 \end{enumerate}
\end{proof}


\begin{teo}[Factorizations and oscillatory matrices]\label{teo:firstLU2}
 For the submatrices of $T^{[N]}$ and its determinants introduced in Definition \ref{def:associated_submatrices} we find that:
 \begin{enumerate}
 \item The auxiliary Jacobi matrix $ J^{[N,1]}$ is oscillatory.
 \item The following factorizations are fulfilled
 \begin{align}\label{eq:LU_bis}
 T^{[N]}_1&= J^{[N,1]} U^{[N-1]},\\
 \label{eq:trunactions JNK}
 T^{[N,k]}_1-\mathscr m_{k+1} E_{1,1}&=J^{[N,k+1]} U^{[N-1,k]}.
 \end{align}
Moreover, $\delta^{[N]}_1>0$ and we have the following relation between determinants
\begin{align}
 \label{eq:DeltaNdelta}
 \Delta_{N,1}&=\frac{\delta^{[N]}_1}{\delta^{[N-1]}},\\
 \label{eq:trunactions JNK_det}
 \Delta_{N,k+1}&=\alpha_1\cdots \alpha_{3k-2} \frac{\delta^{[N,k]}_{1}-\mathscr m_{k+1}\delta^{[N,k+1]}_{1}}{\delta^{[N-1]}} .
\end{align}
(Recall that $\Delta^{[N,k]}\coloneq\det J^{[N,k]}$, $\delta_1^{[N]}\coloneq\det T^{[N]}_1$ and $\delta_1^{[N,k]}\coloneq\det T^{[N,k]}_1$.)
\item The submatrix $ T_1^{[N]}$ is oscillatory.
\item The submatrices $J^{[N,k+1]}$ and $T_1^{[N,k]}$ are oscillatory. In particular, $\Delta_{N,k+1},\delta^{[N,k]}_1>0$.
\item The following relations are satisfied
\begin{align}
 \notag \Delta_{N,2}\delta^{[N-1]}&=c_0 \delta^{[N,1]}_{1}- a_2\delta^{[N,2]}_{1},\\
 \label{eq:quotientDeltas}
 \frac{\Delta_{N,1}}{\Delta_{N,2}}&=\frac{\delta^{[N]}_1}{c_0 \delta^{[N,1]}_{1}- a_2\delta^{[N,2]}_{1}}.
\end{align}
\item 
The recursion relation in $k$ is satisfied
\begin{align}\label{eq:dual_recursion_Delta}
 \Delta_{N,k+1}=\mathscr m_{k+1}\Delta_{N,k+2}-\mathscr l_{k+2}\Delta_{N,k+3} .
\end{align}
 \end{enumerate}
\end{teo}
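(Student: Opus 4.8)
The plan is to extract every claim from the Gauss--Borel factorization $T^{[N]}=L^{[N]}U^{[N]}$ of \eqref{eq:Gauss-Borel}, whose factors already lie in $\operatorname{InTN}$ with positive nontrivial entries by Theorem \ref{teo:firstLU}. For item (1), I would observe that $J^{[N,1]}=L^{[N]}(\{1\},\{N+1\})$ is a submatrix of the $\operatorname{InTN}$ matrix $L^{[N]}$, hence totally nonnegative, and is a Jacobi matrix whose sub- and superdiagonal entries $\mathscr l_2,\dots,\mathscr l_N$ and $1$ are positive. Since its leading principal minors are exactly the determinants $\Delta_{n,1}$, which are positive by Theorem \ref{teo:firstLU}(6), the oscillatory criterion for Jacobi matrices (Theorem \ref{teo:Jacobi oscillatory}) applies and yields oscillatoriness.

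For item (2) I would obtain \eqref{eq:LU_bis} by pure block manipulation of $T^{[N]}=L^{[N]}U^{[N]}$: deleting the last column removes the last column of $U^{[N]}$, and since the restriction of $U^{[N]}$ to its first $N$ columns is $U^{[N-1]}$ bordered by a zero bottom row, the product against $L^{[N]}$ collapses to $L^{[N]}(\text{all},\{1,\dots,N\})\,U^{[N-1]}$; deleting the first row then turns the left factor into $J^{[N,1]}$, giving $T_1^{[N]}=J^{[N,1]}U^{[N-1]}$. The shifted identity \eqref{eq:trunactions JNK} I would verify entrywise, using $a_n=\mathscr l_n\alpha_{3n-5}$, $c_n=\mathscr m_n+\alpha_{3n+1}$ and $b_n=\mathscr l_n+\mathscr m_n\alpha_{3n-2}$ from \eqref{eq:gauss_borel_a_b_alpha}: the product $J^{[N,k+1]}U^{[N-1,k]}$ matches $T_1^{[N,k]}$ in every entry except at the $(1,1)$ position, where it falls short precisely by $\mathscr l_{k+1}$, which is the displayed rank-one correction $E_{1,1}$. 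Taking determinants then produces the determinantal relations: $\det U^{[N-1]}=\delta^{[N-1]}$ gives \eqref{eq:DeltaNdelta} and $\delta_1^{[N]}=\Delta_{N,1}\delta^{[N-1]}>0$, while $\det U^{[N-1,k]}=\alpha_{3k+1}\cdots\alpha_{3N-2}=\delta^{[N-1]}/(\alpha_1\cdots\alpha_{3k-2})$ together with the multilinearity expansion $\det\!\big(T_1^{[N,k]}-\mathscr l_{k+1}E_{1,1}\big)=\delta_1^{[N,k]}-\mathscr l_{k+1}\delta_1^{[N,k+1]}$ yields \eqref{eq:trunactions JNK_det}.

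For items (3) and (4) the engine is the Gantmacher--Krein criterion, Theorem \ref{teo:Gantmacher--Krein Criterion}. By \eqref{eq:LU_bis}, $T_1^{[N]}$ is a product of two totally nonnegative matrices ($J^{[N,1]}$ oscillatory by (1), $U^{[N-1]}\in\operatorname{InTN}$), hence totally nonnegative; it is nonsingular since $\delta_1^{[N]}>0$; and its first off-diagonals $a_2,\dots,a_N$, $c_1,\dots,c_{N-1}$ are positive by Proposition \ref{pro:abc>0}. This gives (3). For (4), $J^{[N,k+1]}$ and $T_1^{[N,k]}$ are contiguous trailing principal submatrices of the oscillatory matrices $J^{[N,1]}$ and $T_1^{[N]}$, hence totally nonnegative with positive first off-diagonals, so Gantmacher--Krein reduces oscillatoriness to nonsingularity, i.e. to $\Delta_{N,k+1}>0$ and $\delta_1^{[N,k]}>0$. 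I expect this positivity of trailing minors to be the main obstacle: it cannot come from the Jacobi criterion without circularity, so I would invoke the standard structural fact that every trailing principal minor of a nonsingular totally nonnegative matrix is positive (via Jacobi's complementary-minor identity and the signature-total-nonnegativity of the inverse, \cite{Fallat-Johnson}), applied to $J^{[N,1]}$ and to $T_1^{[N]}$; these trailing minors are exactly $\Delta_{N,k+1}$ and $\delta_1^{[N,k]}$.

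Finally, items (5) and (6) are bookkeeping. For (5) I would specialise \eqref{eq:trunactions JNK_det} to $k=1$ and use $\alpha_1=\delta^{[0]}/\delta^{[-1]}=c_0$ together with $\alpha_1\mathscr l_2=a_2$ to rewrite it as $\Delta_{N,2}\delta^{[N-1]}=c_0\delta_1^{[N,1]}-a_2\delta_1^{[N,2]}$; dividing by this after substituting $\Delta_{N,1}=\delta_1^{[N]}/\delta^{[N-1]}$ from \eqref{eq:DeltaNdelta} gives the quotient \eqref{eq:quotientDeltas}. For (6), expanding $\Delta_{N,k+1}=\det J^{[N,k+1]}$ along its first row $(\mathscr m_{k+1},1,0,\dots)$ produces $\mathscr m_{k+1}\Delta_{N,k+2}$ minus the cofactor of the superdiagonal $1$; expanding that cofactor along its first column, whose only nonzero entry is $\mathscr l_{k+2}$, gives $\mathscr l_{k+2}\Delta_{N,k+3}$, which is exactly the dual recursion \eqref{eq:dual_recursion_Delta}.
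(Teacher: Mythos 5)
Your proof is correct and follows the paper's own route almost step for step: the Gauss--Borel factorization plus Theorem \ref{teo:Jacobi oscillatory} for item (1), block manipulation and determinant multiplicativity for item (2), the Gantmacher--Krein criterion for item (3), and first-row expansions for items (5) and (6). Two points are worth recording. First, you have in effect corrected a typo in the statement: the rank-one correction in \eqref{eq:trunactions JNK} and \eqref{eq:trunactions JNK_det} must indeed be $\mathscr l_{k+1}$, not $\mathscr m_{k+1}$, since $b_{k+1}=\mathscr l_{k+1}+\mathscr m_{k+1}\alpha_{3k+1}$ while the $(1,1)$ entry of $J^{[N,k+1]}U^{[N-1,k]}$ is $\mathscr m_{k+1}\alpha_{3k+1}$; this is also the only version compatible with the $k=1$ identity in item (5), where the coefficient becomes $\alpha_1\mathscr l_2=a_2$, and the paper itself writes $T^{[N,k]}_1-\mathscr l_{k+1}E_{1,1}$ when it reuses this equation in the proof of Theorem \ref{theorem:spectral_representation2}. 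Second, for item (4) the paper simply cites the fact that a contiguous principal submatrix of an oscillatory matrix is oscillatory (\cite[Corollary 2.6.7]{Fallat-Johnson}), which immediately yields both the oscillatory character and the positivity of $\Delta_{N,k+1}$ and $\delta_1^{[N,k]}$; your reduction via Gantmacher--Krein to the positivity of trailing principal minors of an $\operatorname{InTN}$ matrix is a valid, slightly longer detour resting on the same standard structural facts.
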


\begin{proof}
\begin{enumerate}
 
\item 
According to Theorem \ref{teo:Jacobi oscillatory}, see \cite[Chapter XIII,\S 9]{Gantmacher} and \cite[Chapter 2,Theorem 11]{Gantmacher-Krein}, the Jacobi matrix $ J^{[N,1]}$ is oscillatory if and only if, 
\begin{enumerate}
 \item The matrix entries $\mathscr l_2,\dots ,\mathscr l_N$ are positive.
 \item All leading principal minors $\Delta_{n,1}$ are positive.
\end{enumerate}
As we have seen in previous points, both requirements are satisfied.
\item Equations \eqref{eq:LU_bis} and \eqref{eq:trunactions JNK} follows directly from the Gauss--Borel factorization of $T^{[N]}$. Taking determinants and expanding the determinant along the first row we get \eqref{eq:trunactions JNK_det}.
 From Equation \eqref{eq:LU_bis} we conclude that
$\alpha_1\alpha_4\cdots \alpha_{3n-2} \Delta_n=\det T^{[n]}_1$.
As previously said all $\alpha_1,\alpha_4,\dots, \alpha_{3n-2}>0$ and $\Delta_{n,1}>0$. Therefore, $\delta^{[N]}_1\neq 0$.
\item Given that the matrix $T^{[N]}_1$ belongs to InTN with $a_n,b_n,c_n>0$, see Proposition \ref{pro:abc>0}, the Gantmacher--Krein Criterion, Theorem \ref{teo:Gantmacher--Krein Criterion}, leads to the oscillatory character of this submatrix. 
\item If a matrix $A$ is oscillatory then so is any submatrix $A[\boldsymbol \alpha]$ for any contiguous subset of indexes $\boldsymbol{\alpha}$, 
see \cite[Chapter 2, \S 7]{Gantmacher-Krein} and \cite[Corollary 2.6.7]{Fallat-Johnson}. Then, $J^{[N,k+1]}=J^{[N]}(\{k+1,\dots,N\})$ and $T_1^{[N,k]}=T_1^{[N]}(\{k+1,\dots,N\})$ are oscillatory and, consequently, $\Delta_{N,k+1}=\det J^{[N]}(\{k+1,\dots,N\})>0$ and $\delta_1^{[N,k]}=\det T_1^{[N]}(\{k+1,\dots,N\})>0$.
\item Put $k=1$ in \eqref{eq:trunactions JNK_det} and recall that $ \alpha_1 =c_0$ and $\alpha_1\mathscr l_2=a_2$. For Equation \eqref{eq:quotientDeltas} use \eqref{eq:DeltaNdelta}.
\item Expand the determinants along the first row.

\end{enumerate}
\end{proof}

A set of convergent infinite continued fractions are important in what follows.

\begin{defi}[Infinite continued fraction and tails]\label{def:infinite_continued_fraction}
 We introduce the following infinite continued fraction
 \begin{align}\label{eq:infinite_continued_fraction}
 \gaussk[1]&\coloneq \mathscr m_{1}-\cfrac{\mathscr l_{2}}{\mathscr m_{2}-\cfrac{\mathscr l_{3}}{\mathscr m_{3}-\raisebox{-.33\height}{\(\ddots\)}}}
 \end{align}
and its tails
\begin{align*}
 \gaussk[k+1]&\coloneq \mathscr m_{k+1}-\cfrac{\mathscr l_{k+2}}{\mathscr m_{k+2}-\cfrac{\mathscr l_{k+3}}{\mathscr m_{k+3}-\raisebox{-.33\height}{\(\ddots\)}}}, & k&\in\N.
\end{align*}
\end{defi}
\begin{coro}
The infinite continued fraction in \eqref{eq:infinite_continued_fraction} can be computed as the following large $N$ limit ratio
 \begin{align}\label{eq:continued_fraction}
 \gaussk[1]&=\lim_{N\to\infty}\frac{\delta^{[N]}_1}{c_0 \delta^{[N,1]}_{1}- a_2\delta^{[N,2]}_{1}}
 \end{align}
of determinants given in \eqref{eq:TN1} and \eqref{eq:TN1k}.
\end{coro}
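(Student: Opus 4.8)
The plan is to read the right-hand side of \eqref{eq:continued_fraction} as a limit of convergents of the continued fraction $\gaussk[1]$, so that the statement reduces to the classical fact that a convergent continued fraction is the limit of its approximants. First I would invoke the identity \eqref{eq:quotientDeltas} of Theorem \ref{teo:firstLU2}, which holds for every $N$ and gives
\[
\frac{\delta^{[N]}_1}{c_0 \delta^{[N,1]}_{1} - a_2 \delta^{[N,2]}_{1}} = \frac{\Delta_{N,1}}{\Delta_{N,2}}.
\]
Thus the quantity under the limit is nothing but the ratio of the two Jacobi determinants $\Delta_{N,1}$ and $\Delta_{N,2}$, and the problem is transferred from the Hessenberg determinants $\delta_1^{[N,k]}$ to the auxiliary Jacobi determinants.

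Next I would apply Theorem \ref{teo:firstLU}(5) with $k=1$ and $n=N$, which asserts $\gaussk[N,1]=\Delta_{N,1}/\Delta_{N,2}$. Hence the displayed ratio equals the \emph{finite} continued fraction $\gaussk[N,1]$ of Definition \ref{def:finite_contnued_ractions}. By inspection, truncating the infinite continued fraction \eqref{eq:infinite_continued_fraction} at the level $\mathscr m_N$ produces exactly $\gaussk[N,1]$, so these finite continued fractions are precisely the successive approximants of $\gaussk[1]$. The conclusion is then immediate: by the definition of the value of a convergent infinite continued fraction, $\gaussk[1]=\lim_{N\to\infty}\gaussk[N,1]$, and combining this with the two identities above yields \eqref{eq:continued_fraction}.

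The only genuinely non-formal point, and the step I would flag as the main obstacle, is the \emph{existence} of this limit, that is the convergence of $\gaussk[1]$; everything else is an algebraic identification. This is where the positivity established earlier does the work: by Theorem \ref{teo:firstLU}(2) the entries satisfy $\mathscr l_n,\mathscr m_n>0$, and together with the boundedness of $T$ this makes $\gaussk[1]$ the $J$-fraction attached to the bounded oscillatory auxiliary Jacobi matrix $J^{[N,1]}$, whose convergence follows from the spectral theory of such operators (Markov--Stieltjes, or a Seidel--Stern type criterion). Since the partial numerators and denominators $\Delta_{N,1},\Delta_{N,2}$ obey the fundamental three-term recurrence \eqref{eq:recurrence_Delta}, which is exactly the Euler--Wallis recurrence already used in the proof of Theorem \ref{teo:firstLU}(5), no additional estimate beyond this convergence input is required.
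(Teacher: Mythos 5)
Your argument is correct and coincides with the paper's, which disposes of the corollary as a ``direct consequence of \eqref{eq:quotientDeltas}'': the displayed ratio equals $\Delta_{N,1}/\Delta_{N,2}=\gaussk[N,1]$ by Theorem \ref{teo:firstLU2}(v) and Theorem \ref{teo:firstLU}(v), and the infinite continued fraction is by definition the limit of these approximants. The only divergence is in how convergence is secured: you appeal to spectral theory of bounded Jacobi operators, whereas the paper obtains it elementarily in the immediately following theorem by showing that $\{\gaussk[n,1]\}_n$ is a positive, strictly decreasing sequence.
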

\begin{proof}
 Direct consequence of \eqref{eq:quotientDeltas}.
\end{proof}

Now, an important result follows regarding the behavior of these infinite continued fractions.

\begin{teo}[Infinite continued fractions]
For the infinite continued fractions $\gaussk[1]$ given in~\eqref{eq:infinite_continued_fraction} we~have:
\begin{enumerate}
 \item For $k\in\N_0$, the sequences $\left\{\gaussk[n,k]\right\}_{n=k+1}^\infty$ of the finite continued fractions given in Definition \ref{def:finite_contnued_ractions} are positive and strictly decreasing.
 \item 
The infinite continued fraction $\gaussk[1]$ converges and is nonnegative. 
\item The tails converge and are positive, i.e. $\gaussk[k+1]>0$ for $k\in\N$.
\end{enumerate}
\end{teo}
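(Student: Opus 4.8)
The plan is to treat statement (1) as the engine and to deduce (2) and (3) by letting $n\to\infty$. The crucial input throughout is the determinant representation $\gaussk[n,k]=\Delta_{n,k}/\Delta_{n,k+1}$ from item (5) of Theorem~\ref{teo:firstLU}, together with the positivity $\Delta_{n,k}>0$ for all $k\geqslant 1$, which is supplied by item (6) of Theorem~\ref{teo:firstLU} (the case $k=1$) and item (4) of Theorem~\ref{teo:firstLU2} (the cases $k\geqslant 2$). With these in hand the positivity of each $\gaussk[n,k]$ is immediate, so the genuine content of (1) is the strict monotonicity in~$n$.

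For the strict decrease I would exploit the recursive Möbius structure $\gaussk[n,k]=\mathscr m_k-\mathscr l_{k+1}/\gaussk[n,k+1]$ and the elementary fact that, since $\mathscr l_{k+1}>0$, the map $x\mapsto \mathscr m_k-\mathscr l_{k+1}/x$ is strictly increasing on $x>0$. Fixing $n$ and arguing by downward induction on the top index $j$, the base case is $\gaussk[n,n-1]=\mathscr m_{n-1}-\mathscr l_n/\mathscr m_n<\mathscr m_{n-1}=\gaussk[n-1,n-1]$, and each increasing map propagates the inequality upward until it reaches $\gaussk[n,k]<\gaussk[n-1,k]$; the positivity established above guarantees we are always evaluating these maps where they are increasing. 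Equivalently, a Casoratian computation for the recurrence \eqref{eq:recurrence_Delta} (whose two solutions are, by item (3) of Theorem~\ref{teo:firstLU}, the numerators $\Delta_{n,k}$ and denominators $\Delta_{n,k+1}$) yields the explicit formula $\gaussk[n,k]-\gaussk[n-1,k]=-\mathscr l_{k+1}\cdots\mathscr l_n/(\Delta_{n,k+1}\Delta_{n-1,k+1})<0$. Either route uses only the positivity of the $\mathscr l_j$ and of the $\Delta_{n,k+1}$.

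Statement (2) is then a one-line consequence. By the Corollary following Definition~\ref{def:infinite_continued_fraction} (combined with \eqref{eq:quotientDeltas}) the value of $\gaussk[1]$ is $\lim_n\gaussk[n,1]$, i.e.\ the limit of a sequence that by (1) is positive and strictly decreasing; such a sequence is bounded below by $0$ and hence converges to a nonnegative limit. The identical reasoning gives convergence of every tail, $\gaussk[k+1]=\lim_n\gaussk[n,k+1]\geqslant 0$.

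The delicate point, and the main obstacle, is the \emph{strict} positivity of the tails in (3), since a positive decreasing sequence could a priori tend to $0$. I would rule this out by a contradiction that uses the self-similar relation one index higher up. Suppose $\gaussk[j]=0$ for some $j\geqslant 2$. By (1) the sequence $\gaussk[n,j]$ decreases to $0$ through strictly positive values, so in $\gaussk[n,j-1]=\mathscr m_{j-1}-\mathscr l_j/\gaussk[n,j]$ the subtracted term blows up and forces $\gaussk[n,j-1]\to-\infty$; this contradicts $\gaussk[n,j-1]>0$, which is legitimate because $j-1\geqslant 1$. Hence $\gaussk[j]>0$ for every $j\geqslant 2$, which is exactly (3). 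Note that the argument cannot be pushed to $j=1$, so it says nothing about $\gaussk[1]$ itself, consistently with (2) asserting only nonnegativity there; indeed the possible vanishing of $\gaussk[1]$ is precisely the borderline separating matrices with and without the positive bidiagonal factorization.
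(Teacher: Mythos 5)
Your proposal is correct and follows essentially the same route as the paper: positivity of $\gaussk[n,k]$ from the positivity of the determinants $\Delta_{n,k}$, strict decrease in $n$ by propagating the inequality through the increasing Möbius maps $x\mapsto\mathscr m_k-\mathscr l_{k+1}/x$ down to the base case $\mathscr m_{n-1}-\mathscr l_n/\mathscr m_n<\mathscr m_{n-1}$, convergence of a positive decreasing sequence for (2), and positivity of the tails because a vanishing tail would force the level above to diverge, contradicting its convergence. The explicit Casoratian identity you mention is a nice self-contained alternative for the monotonicity step, but it does not change the substance of the argument.
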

\begin{proof}
\begin{enumerate}
 \item The positivity follows at once from the positivity of $\Delta_{N,k}$. 
From \eqref{eq:dual_recursion_Delta} we have
\begin{align}\label{eq:Deltas}
 \frac{ \Delta_{N+1,k+1}}{ \Delta_{N+1,k+2}}&=\mathscr m_{k+1}
 -\cfrac{\mathscr l_{k+2}}{\frac{ \Delta_{N+1,k+2}}{ \Delta_{N+1,k+3}}},&
 \frac{ \Delta_{N,k+1}}{ \Delta_{N,k+2}}&=\mathscr m_{k+1}
 -\cfrac{\mathscr l_{k+2}}{\frac{ \Delta_{N,k+2}}{ \Delta_{N,k+3}}}.
\end{align}
As $m_k,\Delta_{N,k+1}>0$ the inequality
\begin{align}\label{eq:first_inequality}
 \frac{\Delta_{N+1,k+1}}{\Delta_{N+1,k+2}}< \frac{\Delta_{N,k+1}}{\Delta_{N,k+2}},
\end{align}
can be written 
\begin{align*}
 \mathscr m_{k+1}
 -\cfrac{\mathscr l_{k+2}}{\frac{ \Delta_{N+1,k+2}}{ \Delta_{N+1,k+3}}}<\mathscr m_{k+1}
 -\cfrac{\mathscr l_{k+2}}{\frac{ \Delta_{N,k+2}}{ \Delta_{N,k+2}}},
\end{align*}
where we have used \eqref{eq:Deltas}. Therefore, \eqref{eq:first_inequality} is equivalent to the inequality
\begin{align*}
 \frac{\Delta_{N+1,k+2}}{\Delta_{N+1,k+3}}< \frac{\Delta_{N,k+2}}{\Delta_{N,k+3}}.
\end{align*}
Hence, if for $k=N-2$ the inequality 
\begin{align}\label{eq:inequality_basic}
 \frac{\Delta_{N+1,N-1}}{\Delta_{N+1,N}}< \frac{\Delta_{N,N-1}}{\Delta_{N,N}}
\end{align}
is fulfilled, the inequality \eqref{eq:first_inequality} will hold. But, 
\begin{align*}
 \frac{\Delta_{N+1,N-1}}{\Delta_{N+1,N}}&=\mathscr m_N-\frac{\mathscr l_{N+1}}{\mathscr m_{N+1}},& \frac{\Delta_{N,N-1}}{\Delta_{N,N}}&=\mathscr m_N,
\end{align*}
and \eqref{eq:inequality_basic} satisfied.


\item Obvious from the previous result, any positive decreasing sequence is convergent to a nonnegative number.
\item For $k\in\N$ we have $\gaussk[k]=\mathscr m_k-\frac{\mathscr l_{k+1}}{\gaussk[k+1]}$, $\mathscr l_{k+1}>0$, so that the convergence of $\gaussk[k]$ requires $\gaussk[k+1]>0$. 
\end{enumerate}
\end{proof}

%
%


\section{Positive bidiagonal factorization of tetradiagonal Hessenberg matrices} \label{sec:3}
Now we discuss how the Gauss--Borel factorization can be used to find a bidiagonal factorization of the banded Hessenberg matrix. This will lead to the appearance of continued fractions in our theory.
\begin{lemma}
The factorization of any lower triangular matrix of the form 
\begin{align*}
 L^{[N]}&=\left[\begin{NiceMatrix}[columns-width = auto]
 1 &0&\Cdots&&&0\\
 \mathscr m_1& 1 &\Ddots&&&\Vdots\\
 \mathscr l_2 &\mathscr m_2& 1& &&\\
 0&\mathscr l_3&\mathscr m_3&1&&\\
 \Vdots& \Ddots& \Ddots & \Ddots&\Ddots&0\\
 0 &\Cdots&0&\mathscr l_N&\mathscr m_N&1
 \end{NiceMatrix}\right], 
 \end{align*}
 into bidiagonal factors, i.e.,
\begin{align}\label{eq:L1L2U_truncated}
 L^{[N]}&= L^{[N]}_{1} L_{2}^{[N]} ,&
 L^{[N]}_{1}&=
 \begin{bNiceMatrix}
 1 &0&\Cdots&&0\\
 \alpha_{2} &\Ddots &\Ddots&&\\
 0& \alpha_{5} & & &\Vdots\\
 \Vdots& \Ddots& \Ddots & \Ddots&0\\
 0&\Cdots&0&\alpha_{3N-1}&1
 \end{bNiceMatrix},& L_{2}^{[N]}&=
 \begin{bNiceMatrix}
 1 &0&\Cdots&&0\\
 \alpha_{3} &\Ddots &\Ddots&&\\
 0& \alpha_{6} & & &\Vdots\\
 \Vdots& \Ddots& \Ddots & \Ddots&0\\
 0&\Cdots&0&\alpha_{3N}&1
 \end{bNiceMatrix},
\end{align}
is uniquely determined in terms of $\alpha_2$, with
\begin{align}\label{eq:continued _fraction_bidiagonals}
 \alpha_{3n}&= \mathscr m_{n}-\cfrac{\mathscr l_{n}}{\mathscr m_{n-1}-\cfrac{\mathscr l_{n-1}}{\mathscr m_{n-2}-\cfrac{\mathscr l_{n-2}}{\llap{\ensuremath{\ddots}}
 \raisebox{-0.8em}{\ensuremath{\mathscr m_{2}-\cfrac{\mathscr l_2}{\mathscr m_1-\alpha_2}}}}}}, & \alpha_{3n-1}&= \cfrac{\mathscr l_{n}}{\mathscr m_{n-1}-\cfrac{\mathscr l_{n-1}}{\mathscr m_{n-2}-\cfrac{\mathscr l_{n-2}}{\llap{\ensuremath{\ddots}}
 \raisebox{-0.8em}{\ensuremath{\mathscr m_{2}-\cfrac{\mathscr l_2}{\mathscr m_1-\alpha_2}}}}}}.
\end{align}
The factorization exists if and only if $\alpha_{3n}\neq 0$ for $n\in\{1,\dots,N-1\}$.
\end{lemma}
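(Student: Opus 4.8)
The plan is to multiply the two proposed factors out explicitly and then solve the resulting equations recursively. First I would observe that both $L^{[N]}_1$ and $L^{[N]}_2$ are lower bidiagonal with unit diagonal, so their product is automatically lower triangular and vanishes below the second subdiagonal; hence $L^{[N]}_1 L^{[N]}_2$ already has exactly the band structure of $L^{[N]}$, and the factorization is at least structurally admissible. Reading the subdiagonal entry of $L^{[N]}_1$ in row $n+1$ as $\alpha_{3n-1}$ and that of $L^{[N]}_2$ as $\alpha_{3n}$, a direct computation of the matrix product along the first and second subdiagonals gives
\begin{align*}
 \mathscr m_n &= \alpha_{3n-1} + \alpha_{3n}, & \mathscr l_n &= \alpha_{3n-1}\,\alpha_{3n-3}.
\end{align*}
The one genuine observation is that the second subdiagonal entry of the product is a single product $\alpha_{3n-1}\alpha_{3n-3}$ rather than a sum, precisely because each factor carries only one off-diagonal band; it is this product structure that later turns the recursion into a continued fraction.

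Next I would solve this system recursively, treating $\alpha_2$ as the free parameter. The top relation $\mathscr m_1 = \alpha_2 + \alpha_3$ fixes $\alpha_3 = \mathscr m_1 - \alpha_2$, and for $n\geqslant 2$ the two identities rearrange to
\begin{align*}
 \alpha_{3n-1} &= \frac{\mathscr l_n}{\alpha_{3n-3}}, & \alpha_{3n} &= \mathscr m_n - \frac{\mathscr l_n}{\alpha_{3n-3}}.
\end{align*}
Thus once $\alpha_2$ is chosen, every subsequent coefficient is determined, which proves that the factorization is uniquely determined in terms of $\alpha_2$; conversely, any pair of bidiagonal factors of the required shape must satisfy these identities, so there are no other solutions.

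Then I would establish the closed continued-fraction expressions \eqref{eq:continued _fraction_bidiagonals} by induction on $n$, simply unrolling the recursion $\alpha_{3n} = \mathscr m_n - \mathscr l_n/\alpha_{3(n-1)}$ down to the base value $\alpha_3 = \mathscr m_1 - \alpha_2$; the expression for $\alpha_{3n-1}$ then follows immediately, since $\alpha_{3n-1} = \mathscr l_n/\alpha_{3(n-1)}$ is exactly the numerator-over-denominator appearing at the top of the continued fraction for $\alpha_{3n}$. Finally, the recursion divides by $\alpha_{3(n-1)}$ at each step $n = 2,\dots,N$ and by nothing else, so the scheme is well defined precisely when $\alpha_{3n}\neq 0$ for $n\in\{1,\dots,N-1\}$, which is the stated existence criterion. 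I expect the only real difficulty to be the bookkeeping of the triple-indexed coefficients $\alpha_{3n-1},\alpha_{3n},\alpha_{3n-3}$ against the actual matrix positions; the underlying algebra is elementary and the continued fraction arises automatically from the product form of the $\mathscr l$-band.
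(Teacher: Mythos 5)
Your proposal is correct and follows essentially the same route as the paper: read off the entrywise relations $\mathscr m_n=\alpha_{3n-1}+\alpha_{3n}$ and $\mathscr l_n=\alpha_{3n-1}\alpha_{3n-3}$ from the product, solve them recursively starting from the free parameter $\alpha_2$, and observe that the only obstruction is division by $\alpha_{3n-3}$ at each step, giving the stated existence criterion. The induction unrolling the recursion into the continued fractions \eqref{eq:continued _fraction_bidiagonals} is exactly what the paper does after displaying the first few coefficients explicitly.
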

\begin{proof}
 The factorization \eqref{eq:L1L2U_truncated} implies that
 \begin{align*}
 \mathscr m_n&=\alpha_{3n-1}+\alpha_{3n}, & n&\in\{1,\dots, N\}, &\mathscr l_n&=\alpha_{3n-1}\alpha_{3n-3},&n&\in\{2,\dots, N\}.
 \end{align*}
These can be solved recursively as
\begin{align*}
 \alpha_3&=\mathscr m_1-\alpha_2, & \alpha_5&=\frac{\mathscr l_2}{\mathscr m_1-\alpha_2}, \\ \alpha_6&=\mathscr m_2-\frac{\mathscr l_2}{\mathscr m_1-\alpha_2}, &
 \alpha_8&=\cfrac{\mathscr l_3}{\mathscr m_2-\frac{\mathscr l_2}{\mathscr m_1-\alpha_2}}, \\
 \alpha_9&=\mathscr m_3-\cfrac{\mathscr l_3}{\mathscr m_2-\frac{\mathscr l_2}{\mathscr m_1-\alpha_2}},& \alpha_{10}&=\cfrac{\mathscr l_4}{\mathscr m_3-\cfrac{\mathscr l_3}{\mathscr m_2-\frac{\mathscr l_2}{\mathscr m_1-\alpha_2}}},
\end{align*}
and the result follows by induction.
Hence, for a given $\alpha_2$ the factorization exists if and only if $\alpha_{3n}\neq 0$, for $n\in\{1,\dots,N-1\}$.
\end{proof}

\begin{pro}\label{pro:bidiagonalL}
For each $\alpha_2< \gaussk[N,1]$, with $\gaussk[N,1]$ the finite continued fraction in Definition \ref{def:finite_contnued_ractions},
the factorization of $L^{[N]}$ into bidiagonal factors 
\begin{align*}
 L^{[N]}&= L^{[N]}_{1} L_{2}^{[N]} ,&
 L^{[N]}_{1}&=
 \begin{bNiceMatrix}
 1 &0&\Cdots&&0\\
 \alpha_{2} &\Ddots &\Ddots&&\\
 0& \alpha_{5} & & &\Vdots\\
 \Vdots& \Ddots& \Ddots & \Ddots&0\\
 0&\Cdots&0&\alpha_{3N-1}&1
 \end{bNiceMatrix},& L_{2}^{[N]}&=
 \begin{bNiceMatrix}
 1 &0&\Cdots&&0\\
 \alpha_{3} &\Ddots &\Ddots&&\\
 0& \alpha_{6} & & &\Vdots\\
 \Vdots& \Ddots& \Ddots & \Ddots&0\\
 0&\Cdots&0&\alpha_{3N}&1
 \end{bNiceMatrix},
\end{align*}
with $ \alpha_3,\alpha_5,\alpha_6,\alpha_8,\dots,\alpha_{3n-1},\alpha_{3n}>0$, exists and is unique.
 If $\alpha_2\in[0,\gaussk[N,1])$ then $L^{[N]}_{1} , L_{2}^{[N]}\in\operatorname{InTN}$.
\end{pro}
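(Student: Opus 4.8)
The plan is to reduce the whole statement to the positivity of the entries produced by the preceding Lemma, and to control that positivity by recognizing the factorization continued fractions as ratios of determinants of a shifted auxiliary Jacobi matrix. By that Lemma the factorization \eqref{eq:L1L2U_truncated} is already uniquely determined by $\alpha_2$ and exists precisely when $\alpha_{3n}\neq0$ for $n\in\{1,\dots,N-1\}$; so everything comes down to showing positivity of $\alpha_{3n}$ for $n\in\{1,\dots,N\}$ and of $\alpha_{3n-1}$ for $n\in\{2,\dots,N\}$, and then upgrading to the $\operatorname{InTN}$ conclusion. Writing $f_n\coloneq\alpha_{3n}$, the explicit formulas of that Lemma read $f_1=\mathscr m_1-\alpha_2$, $f_n=\mathscr m_n-\mathscr l_n/f_{n-1}$, together with $\alpha_{3n-1}=\mathscr l_n/f_{n-1}$. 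Thus it suffices to prove $f_n>0$ for $n\in\{1,\dots,N\}$: then $\alpha_{3n-1}=\mathscr l_n/f_{n-1}>0$ follows at once from $\mathscr l_n>0$ (Theorem \ref{teo:firstLU}(ii)).

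The key step is to read the recursion for $f_n$ as a determinantal ratio. I would introduce the shifted Jacobi determinant $\tilde\Delta_n\coloneq\det\big(J^{[n,1]}-\alpha_2 E_{1,1}\big)$, the $n$-th leading principal minor of the auxiliary Jacobi matrix with its $(1,1)$ entry lowered by $\alpha_2$. Expanding along the last row yields the three–term recurrence $\tilde\Delta_n=\mathscr m_n\tilde\Delta_{n-1}-\mathscr l_n\tilde\Delta_{n-2}$ with $\tilde\Delta_0=1$ and $\tilde\Delta_1=\mathscr m_1-\alpha_2$, so that $f_n=\tilde\Delta_n/\tilde\Delta_{n-1}$. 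Expanding instead along the first row (linearity in the top-left entry) gives $\tilde\Delta_n=\Delta_{n,1}-\alpha_2\Delta_{n,2}$, and since $\gaussk[n,1]=\Delta_{n,1}/\Delta_{n,2}$ by Theorem \ref{teo:firstLU}(v) this factors cleanly as $\tilde\Delta_n=\Delta_{n,2}\big(\gaussk[n,1]-\alpha_2\big)$.

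With this identity positivity is immediate. The minors $\Delta_{n,1}$ are positive by Theorem \ref{teo:firstLU}(vi) and the finite continued fractions $\gaussk[n,1]$ are positive by item (i) of the theorem on infinite continued fractions, so $\Delta_{n,2}=\Delta_{n,1}/\gaussk[n,1]>0$; moreover that same item (i) shows $n\mapsto\gaussk[n,1]$ is strictly decreasing, whence $\gaussk[n,1]\geq\gaussk[N,1]>\alpha_2$ for every $n\in\{2,\dots,N\}$, while $\tilde\Delta_1=\mathscr m_1-\alpha_2>0$ holds directly from $\alpha_2<\gaussk[N,1]\leq\gaussk[2,1]=\mathscr m_1$. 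Hence $\tilde\Delta_n>0$ and $f_n=\tilde\Delta_n/\tilde\Delta_{n-1}>0$ for all $n\in\{1,\dots,N\}$, giving existence, uniqueness, and positivity of $\alpha_3,\alpha_5,\alpha_6,\dots,\alpha_{3N-1},\alpha_{3N}$. For the last assertion, if in addition $\alpha_2\geq0$ then every subdiagonal entry of $L^{[N]}_1$ and $L^{[N]}_2$ is nonnegative, so each factor is a product of elementary lower bidiagonal matrices $I+\alpha E_{j+1,j}$ with $\alpha\geq0$ (the cross terms $E_{j+1,j}E_{k+1,k}$ vanish); each such elementary matrix is totally nonnegative with determinant $1$, and by the Cauchy–Binet theorem the products $L^{[N]}_1,L^{[N]}_2$ are totally nonnegative and nonsingular, i.e. lie in $\operatorname{InTN}$.

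The main obstacle is the middle step: recognizing that the ``upward'' continued fraction generated by the bidiagonal factorization (running from $\mathscr m_n$ down to $\mathscr m_1-\alpha_2$) is exactly the ratio $\tilde\Delta_n/\tilde\Delta_{n-1}$ of the shifted Jacobi matrix, and that the very same minor equals $\Delta_{n,2}\big(\gaussk[n,1]-\alpha_2\big)$ in terms of the ``downward'' fraction $\gaussk[n,1]$ from Theorem \ref{teo:firstLU}(v). It is this reversal of orientation, combined with the strict monotonicity of $n\mapsto\gaussk[n,1]$, that lets the single hypothesis $\alpha_2<\gaussk[N,1]$ simultaneously control the positivity of all intermediate entries $\alpha_{3n}$.
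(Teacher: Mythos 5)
Your proof is correct, and it reaches the conclusion by a technically different route from the paper's. The paper's own argument unwinds the continued fractions of the preceding Lemma step by step: positivity of $\alpha_3,\alpha_5$ forces $\alpha_2<\mathscr m_1$, positivity of $\alpha_6,\alpha_8$ forces $\alpha_2<\mathscr m_1-\mathscr l_2/\mathscr m_2$, and so on, each new pair of entries imposing the condition $\alpha_2<\gaussk[n,1]$ for the next $n$; the strict decrease of $n\mapsto\gaussk[n,1]$ collapses all of these into the single hypothesis $\alpha_2<\gaussk[N,1]$, and the argument is closed by an induction that the paper leaves unwritten. You instead linearize the Riccati-type recursion $\alpha_{3n}=\mathscr m_n-\mathscr l_n/\alpha_{3n-3}$ through the shifted Jacobi determinants $\tilde\Delta_n=\det\big(J^{[n,1]}-\alpha_2E_{1,1}\big)$ and obtain the closed form $\tilde\Delta_n=\Delta_{n,1}-\alpha_2\Delta_{n,2}=\Delta_{n,2}\big(\gaussk[n,1]-\alpha_2\big)$, from which every positivity statement is read off at once. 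Both arguments ultimately rest on the same two ingredients, namely the identification $\gaussk[n,1]=\Delta_{n,1}/\Delta_{n,2}$ from Theorem \ref{teo:firstLU} and the monotonicity of the finite continued fractions, but your determinantal identity makes the induction explicit and pinpoints exactly where the hypothesis $\alpha_2<\gaussk[N,1]$ enters; it also gives existence and uniqueness uniformly for negative $\alpha_2$ without extra discussion. In addition, you supply a justification of the final $\operatorname{InTN}$ assertion (factoring each unit lower bidiagonal matrix with nonnegative subdiagonal into elementary bidiagonal factors and invoking Cauchy--Binet), a point the paper's proof passes over in silence. The only minor friction is inherited from the paper's own convention $\gaussk[k+1,k]=\mathscr m_k$ versus the determinantal ratio for small indices, and it does not affect your argument.
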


\begin{proof}
In the solution provided by Equation \eqref{eq:continued _fraction_bidiagonals} we require that $ \alpha_3,\alpha_5,\alpha_6,\alpha_8,\dots,\alpha_{3N-1},\alpha_{3N}>0$. Let us proceed step by step, firstly if $\alpha_2<\mathscr m_1$ we see that
$\alpha_3,\alpha_5>0$. In the next step, we get that if $\alpha_2<\mathscr m_1$ and $\alpha_2<\mathscr m_1-\frac{\mathscr l_2}{\mathscr m_2}$ we have $\alpha_3,\alpha_5,\alpha_6,\alpha_8>0$. 
Notice that as the sequence $\gaussk[N,1]>0$ is decreasing $\mathscr m_1-\frac{\mathscr l_2}{\mathscr m_2}<\mathscr m_1$ and only one condition is needed.
Then, in the next step we conclude that what is needed for $\alpha_3,\alpha_5,\alpha_6,\alpha_8,\alpha_9,\alpha_{10}>0$ is $\alpha_2<\mathscr m_1-\cfrac{\mathscr l_2}{\mathscr m_2-\frac{\mathscr l_3}{\mathscr m_3}}$. 
Finally, induction implies the result.
\end{proof}

\begin{teo}[Positive bidiagonal factorization in the finite case]
 Let us assume that the matrix $ T^{[N]}$ given in~\eqref{eq:Hessenberg_truncation}
 is oscillatory. Then, each 
 $\alpha_2< \gaussk[N,1]$ determines a positive sequence 
$\{\alpha_1,\alpha_3,\alpha_4,\alpha_5,\dots,$ $\alpha_{3N+1}\}$
 such that the factorization
 \begin{align*} 
 T^{[N]}= L^{[N]}_{1} L_{2}^{[N]} U^{[N]},
 \end{align*}
 with bidiagonal matrices given by
 \begin{align*}
\hspace{-.105cm}
L^{[N]}_{1} =
 \begin{bNiceMatrix}
 1 &0&\Cdots&&0\\
 \alpha_{2} &\Ddots &\Ddots&&\\
 0& \alpha_{5} & & &\Vdots\\
 \Vdots& \Ddots& \Ddots & \Ddots&0\\
 0&\Cdots&0&\alpha_{3N-1}&1
 \end{bNiceMatrix},
 L_{2}^{[N]} =
 \begin{bNiceMatrix}
 1 &0&\Cdots&&0\\
 \alpha_{3} &\Ddots &\Ddots&&\\
 0& \alpha_{6} & & &\Vdots\\
 \Vdots& \Ddots& \Ddots & \Ddots&0\\
 0&\Cdots&0&\alpha_{3N}&1
 \end{bNiceMatrix},
 U^{[N]} =
 \left[\begin{NiceMatrix}
 \alpha_1 & 1 &0&\Cdots&0\\
 0& \alpha_4 & \Ddots &\Ddots&\Vdots\\
 \Vdots&\Ddots&\alpha_7&&0\\
 & & \Ddots &\Ddots &1\\
 0 &\Cdots&&0&\alpha_{3N+1}
 \end{NiceMatrix}\right] ,
 \end{align*}
is satisfied.
When $\alpha_2\in[0,\gaussk[N,1])$ each bidiagonal factor is InTN.
\end{teo}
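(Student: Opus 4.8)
The plan is to obtain the factorization as a direct assembly of two results already in hand: the Gauss--Borel factorization $T^{[N]}=L^{[N]}U^{[N]}$ of an oscillatory Hessenberg matrix, and the bidiagonal splitting $L^{[N]}=L^{[N]}_1L^{[N]}_2$ of Proposition~\ref{pro:bidiagonalL}. Since the genuinely analytic work, namely the positivity of the Gauss--Borel entries and the continued-fraction criterion governing the splitting of $L^{[N]}$, has been carried out in Theorem~\ref{teo:firstLU} and Proposition~\ref{pro:bidiagonalL}, what remains is to paste the two factorizations together and to verify that each resulting factor inherits the claimed positivity and total nonnegativity.

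First I would record that, $T^{[N]}$ being oscillatory, all its leading principal minors $\delta^{[n]}$ are strictly positive, so by the criterion for the Gauss--Borel factorization established just after \eqref{eq:Gauss-Borel} the factorization $T^{[N]}=L^{[N]}U^{[N]}$ exists and is unique. The diagonal entries of $U^{[N]}$ are $\alpha_{3n-2}=\delta^{[n-1]}/\delta^{[n-2]}$ by \eqref{eq:gauss_borel_a_b_alpha}, and since consecutive minors are positive these are strictly positive for every $n\in\{1,\dots,N+1\}$; in particular $\alpha_{3N+1}=\delta^{[N]}/\delta^{[N-1]}>0$, which is the one value not already covered by Theorem~\ref{teo:firstLU}(2). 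That same theorem supplies $\mathscr l_2,\dots,\mathscr l_N>0$ and $\mathscr m_1,\dots,\mathscr m_N>0$, so $L^{[N]}$ has exactly the structure required as input to Proposition~\ref{pro:bidiagonalL}.

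Next I would invoke Proposition~\ref{pro:bidiagonalL}. Because the auxiliary Jacobi matrix $J^{[N,1]}$ is oscillatory, Theorem~\ref{teo:firstLU2} gives $\Delta_{N,1},\Delta_{N,2}>0$, whence the finite continued fraction satisfies $\gaussk[N,1]=\Delta_{N,1}/\Delta_{N,2}>0$ by Theorem~\ref{teo:firstLU}(5); thus the admissible range for $\alpha_2$, and in particular the interval $[0,\gaussk[N,1])$, is nonempty. For each $\alpha_2<\gaussk[N,1]$ the proposition produces the unique splitting $L^{[N]}=L^{[N]}_1L^{[N]}_2$ with $\alpha_3,\alpha_5,\alpha_6,\alpha_8,\dots,\alpha_{3N-1},\alpha_{3N}>0$ determined by the continued fractions \eqref{eq:continued _fraction_bidiagonals}. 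Substituting this splitting into the Gauss--Borel factorization yields $T^{[N]}=L^{[N]}_1L^{[N]}_2U^{[N]}$, and the determined entries $\{\alpha_1,\alpha_3,\alpha_4,\alpha_5,\dots,\alpha_{3N+1}\}$, that is all indices except the free parameter $\alpha_2$, are positive by the two preceding paragraphs.

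Finally, for the total-nonnegativity claim I would restrict to $\alpha_2\in[0,\gaussk[N,1])$. The last sentence of Proposition~\ref{pro:bidiagonalL} already gives $L^{[N]}_1,L^{[N]}_2\in\operatorname{InTN}$ throughout this interval, while $U^{[N]}\in\operatorname{InTN}$ is Theorem~\ref{teo:firstLU}(1) (equivalently, an upper bidiagonal matrix with strictly positive diagonal and nonnegative superdiagonal is nonsingular and totally nonnegative). This completes the assembly. I expect no genuine obstacle here, as the theorem is essentially a corollary of the two earlier results; the only points needing care beyond citation are the strict positivity of the terminal diagonal entry $\alpha_{3N+1}$ of $U^{[N]}$ and the nonemptiness of the parameter interval, both of which are settled by the strict positivity of the minors $\delta^{[n]}$ and $\Delta_{N,k}$.
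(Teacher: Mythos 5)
Your proposal is correct and follows exactly the paper's own (one-line) argument: the theorem is obtained by combining the positivity of the Gauss--Borel factors from Theorem~\ref{teo:firstLU}(2) with the bidiagonal splitting of $L^{[N]}$ in Proposition~\ref{pro:bidiagonalL}. The extra details you supply (positivity of $\alpha_{3N+1}$, nonemptiness of the interval for $\alpha_2$, InTN of $U^{[N]}$) are sound elaborations of what the paper leaves implicit.
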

\begin{proof}
Consequence of ii) in Theorem \ref{teo:firstLU} and Proposition \ref{pro:bidiagonalL}.
\end{proof}


\begin{teo}[PBF in the semi-infinite case]
 Let us assume that the banded Hessenberg matrix $ T$ in~\eqref{eq:monic_Hessenberg} is oscillatory.
 Then, for each $\alpha_2<\gaussk[1]$, with $\gaussk[1]$ the infinite continued fraction in \eqref{eq:infinite_continued_fraction}, there exist a unique positive sequence $\{\alpha_1,\alpha_3,\alpha_4,\dots\}$ such that the PBF \eqref{eq:bidiagonal}, \eqref{eq:LU_J_oscillatory_factors} holds.
 If $\alpha_2\in [0,\gaussk[1])$ then $L_1,L_2,U\in \operatorname{InTN}$.
Moreover, we have
the following relations for the matrix entries, 
 \begin{align*}
 \left\{\begin{aligned}
 c_n & = \alpha_{3n+1} + \alpha_{3n} + \alpha_{3n-1} , \\
 b_n & = \alpha_{3n} \alpha_{3n-2}+ \alpha_{3n-1} \alpha_{3n-2}+ \alpha_{3n-1}\alpha_{3n-3}, \\
 a_n & = \alpha_{3n-1}\alpha_{3n-3} \alpha_{3n-5}.
 \end{aligned}\right.
 \end{align*}
\end{teo}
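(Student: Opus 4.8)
The plan is to obtain the semi-infinite factorization as a \emph{stable} limit of the finite ones furnished by the Theorem on positive bidiagonal factorization in the finite case. The first remark is that an oscillatory $T$ has every leading principal truncation $T^{[N]}$ oscillatory, since a leading principal submatrix is indexed by a contiguous block and contiguous submatrices of oscillatory matrices are again oscillatory (as already exploited in Theorem \ref{teo:firstLU2}). To control the admissible range of $\alpha_2$, I would use the Theorem on infinite continued fractions: the sequence $\{\gaussk[n,1]\}_{n\ge2}$ is strictly decreasing with limit $\gaussk[1]$ from \eqref{eq:infinite_continued_fraction}, so $\gaussk[1]<\gaussk[N,1]$ for every finite $N$. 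Hence $\alpha_2<\gaussk[1]$ forces $\alpha_2<\gaussk[N,1]$ simultaneously for all $N$, and the finite-case theorem yields, for each $N$, a positive sequence factorizing $T^{[N]}$.

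The crux is the compatibility of these finite factorizations. I would show that every coefficient is \emph{independent of $N$}: by \eqref{eq:gauss_borel_a_b_alpha} one has $\alpha_{3n-2}=\delta^{[n-1]}/\delta^{[n-2]}$, depending only on the entries $a_k,b_k,c_k$ with $k\le n$; and the continued-fraction expressions for $\alpha_{3n-1}$ and $\alpha_{3n}$ (cf.\ Proposition \ref{pro:bidiagonalL}) involve only $\alpha_2$ together with $\mathscr m_1,\dots,\mathscr m_n$ and $\mathscr l_2,\dots,\mathscr l_n$, which themselves depend only on the same finite block of entries. Thus the value assigned to a given $\alpha_j$ is the same for every $N\ge n$, and letting $N\to\infty$ unambiguously defines one positive sequence $\{\alpha_1,\alpha_3,\alpha_4,\dots\}$. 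Positivity of every $\alpha_j$ is inherited from the finite case (Theorem \ref{teo:firstLU}(ii) and Proposition \ref{pro:bidiagonalL}).

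With the sequence fixed, I would verify $T=L_1L_2U$ entrywise: since $L_1,L_2,U$ are bidiagonal their product is well defined, each entry being a finite sum, and for fixed $(i,j)$ this sum coincides with the corresponding entry of $L_1^{[N]}L_2^{[N]}U^{[N]}=T^{[N]}$ as soon as $N$ is large, whence $(L_1L_2U)_{i,j}=T_{i,j}$. Uniqueness of the sequence for a prescribed $\alpha_2$ descends directly from the finite-case uniqueness. For $\alpha_2\in[0,\gaussk[1])$ the membership $L_1,L_2,U\in\operatorname{InTN}$ follows because any minor of a factor is a minor of some truncated factor, hence nonnegative by the finite-case theorem, while triangularity with nonvanishing diagonal gives nonsingularity. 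The explicit relations for $a_n,b_n,c_n$ are then read off by multiplying the three bidiagonal matrices: computing $L_2U$ first and then $L_1(L_2U)$ and collecting the two subdiagonals, the diagonal and the first superdiagonal reproduces exactly the stated formulas.

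The main obstacle is precisely this stability/consistency step. One must be certain that the coefficient attached to each index does not drift as $N$ increases, so that the limit is legitimate and the infinite product is genuinely well posed; this rests on the banded structure (each matrix entry is a finite computation) together with the $N$-independence of the low-order coefficients, and on the ordering $\gaussk[1]<\gaussk[N,1]$ which keeps $\alpha_2$ admissible for all truncations at once. Everything else is bookkeeping.
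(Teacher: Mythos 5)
Your proposal is correct. The paper in fact states this theorem without an explicit proof, treating it as the direct consequence of the finite-case theorem together with the strict monotone convergence $\gaussk[N,1]\searrow\gaussk[1]$ (so that $\alpha_2<\gaussk[1]$ is admissible for every truncation); your argument — including the $N$-independence of the coefficients coming from the banded structure and the entrywise verification of $T=L_1L_2U$ — supplies exactly that intended reasoning, and the final product formulas for $a_n,b_n,c_n$ check out.
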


It is known that the infinite continued fraction $\gaussk[1]$ in \eqref{eq:infinite_continued_fraction} could be zero, and this is an important issue in the constructions of the spectral measure representation for the banded Hessenberg matrix $T$,
as $\alpha_2$ can not be taken as a positive number (cf. \cite{previo}).

Notice that in \cite{Aptekarev_Kaliaguine_VanIseghem} this was taken for granted, and that in the hypergeometric case \cite{nuestro2,Lima-Loureiro} and the Jacobi--Piñeiro in the semi-band \cite{Aptekarev_Kaliaguine_VanIseghem,nuestro1} is also true that $\alpha_2>0$. 

\section{Oscillatory Toeplitz tetradiagonal matrices}\label{sec:4}
We discuss now the uniform case that appears when 
\begin{align}\label{eq:Toeplitz}
 a_n&=a>0, & b_n&=b\geqslant 0, &c_n&=c\geqslant 0.
\end{align}
That is, the Hessenberg matrix $T$ is a banded Toeplitz matrix
\begin{align}\label{eq:Topelitz}
 T&=\left[\begin{NiceMatrix}[columns-width = auto]
 c& 1&0&\Cdots&\\[-3pt]
 b&c & 1&\Ddots&&\\
 a&b&c&1&&\\
 0&a&b&c&1&\\
 \Vdots&\Ddots&\Ddots&\Ddots&\Ddots&\Ddots\\&&&&
\end{NiceMatrix}\right].
\end{align}
\begin{pro}[Edrei--Schoenberg]\label{pro:toeplitz}
 The Toeplitz matrix \eqref{eq:Topelitz} is oscillatory if and only if there exists $\betaup_1\geqslant\betaup_2\geqslant\betaup_3>0$ such that 
 \begin{align}\label{eq:a_betas}
 a&=\betaup_1\betaup_2\betaup_3,&
 b&=\betaup_1\betaup_2+\betaup_1\betaup_3+\betaup_2\betaup_3,&
 c&=\betaup_1+\betaup_2+\betaup_3.
 \end{align}
\end{pro}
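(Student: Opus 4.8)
The plan is to reduce the oscillatory condition to the classical Edrei--Schoenberg characterization of totally nonnegative Toeplitz matrices via the generating symbol of $T$. First I would record that the $(i,j)$ entry of the banded Toeplitz matrix \eqref{eq:Topelitz} depends only on $i-j$, with $t_{-1}=1$, $t_0=c$, $t_1=b$, $t_2=a$ and $t_k=0$ otherwise, so that $\sum_k t_k z^k = z^{-1}\big(1 + cz + bz^2 + az^3\big)$. By the Edrei--Schoenberg theorem (see \cite{Fallat-Johnson,Gantmacher-Krein}), $T$ is totally nonnegative if and only if the polynomial $P(z)=1+cz+bz^2+az^3$ has only nonpositive real zeros, equivalently $P(z)=\prod_{i=1}^3(1+\betaup_i z)$ with $\betaup_i\geq 0$. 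Expanding this product and matching coefficients gives precisely the elementary symmetric relations \eqref{eq:a_betas}, namely $c=e_1(\betaup)$, $b=e_2(\betaup)$, $a=e_3(\betaup)$.

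For the sufficiency direction I would assume $\betaup_1\geq\betaup_2\geq\betaup_3>0$ together with \eqref{eq:a_betas}. Then $P(z)$ factors with positive $\betaup_i$, so Edrei--Schoenberg yields that $T$, and hence every leading principal truncation $T^{[N]}$ (a submatrix of a TN matrix is TN), is totally nonnegative. To upgrade to oscillatory I would invoke the Gantmacher--Krein Criterion, Theorem \ref{teo:Gantmacher--Krein Criterion}: the superdiagonal entries equal $1>0$ and the subdiagonal entries equal $b=e_2(\betaup)>0$, so only nonsingularity of each $T^{[N]}$ remains. Here I would use the determinant recurrence \eqref{eq:recurrence_delta}, which in the Toeplitz case reads $\delta^{[n]}=c\,\delta^{[n-1]}-b\,\delta^{[n-2]}+a\,\delta^{[n-3]}$ with $\delta^{[-1]}=1$, $\delta^{[0]}=c$. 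Its characteristic equation is $t^3-ct^2+bt-a=\prod_i(t-\betaup_i)=0$, and comparison with the generating function $\prod_i(1-\betaup_i t)^{-1}=\sum_n h_n(\betaup)\,t^n$ of the complete homogeneous symmetric polynomials identifies $\delta^{[n]}=h_{n+1}(\betaup_1,\betaup_2,\betaup_3)$. Since the $\betaup_i$ are positive, each $h_{n+1}(\betaup)>0$, so $\delta^{[N]}>0$, every $T^{[N]}$ is nonsingular, and Gantmacher--Krein gives that each $T^{[N]}$ is oscillatory.

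For the necessity direction I would assume $T$ oscillatory, hence in particular totally nonnegative; Edrei--Schoenberg then forces $P(z)=\prod_{i=1}^3(1+\betaup_i z)$ with $\betaup_i\geq 0$, which is \eqref{eq:a_betas}. The standing hypothesis $a>0$ in \eqref{eq:Toeplitz} gives $a=\betaup_1\betaup_2\betaup_3>0$, so all $\betaup_i>0$, and relabelling provides the ordering $\betaup_1\geq\betaup_2\geq\betaup_3>0$.

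The main obstacle I anticipate is the precise invocation of the Edrei--Schoenberg theorem in the form suited to a lower Hessenberg banded Toeplitz symbol, together with the transfer of total nonnegativity from the semi-infinite matrix to its finite leading principal truncations and the bookkeeping that identifies the truncated determinants with complete homogeneous symmetric polynomials, thereby guaranteeing their strict positivity. The coefficient matching and the Gantmacher--Krein step are then routine.
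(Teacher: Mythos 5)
Your proof is correct and follows essentially the same route as the paper: apply the Edrei--Schoenberg theorem to the symbol $1+ct+bt^{2}+at^{3}$, match coefficients to obtain \eqref{eq:a_betas}, use the standing hypothesis $a>0$ to force all $\betaup_i>0$, and conclude via the Gantmacher--Krein criterion. The only difference is that you explicitly check the nonsingularity hypothesis of that criterion by identifying $\delta^{[n]}$ with the complete homogeneous symmetric polynomial $h_{n+1}(\betaup_1,\betaup_2,\betaup_3)>0$, a detail the paper's proof leaves implicit and only makes explicit later, in Proposition \ref{pro:toeplitz_2}.
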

\begin{proof}
 According to the Edrei--Schoenberg Theorem, see \cite{Edrei,Schoenberg}, the matrix $T$ is TN if and only if the generating function
\begin{align*}
 f(t)=1+ct+bt^2+at^3
\end{align*}
can be written as
\begin{align*}
 f(t)&=(1+\betaup_1 t)(1+\betaup_2 t)(1+\betaup_3 t), & \betaup_1\geqslant\betaup_2\geqslant\betaup_3\geqslant 0.
\end{align*}
In terms of these $\betaup$'s we find \eqref{eq:a_betas}.
As $a>0$ we must have $\betaup_1\geqslant\betaup_2\geqslant\betaup_3>0$. Hence $b>0$ and the Gantmacher--Krein criterion, see Theorem \ref{teo:Gantmacher--Krein Criterion}, leads to the oscillatory character of the Toeplitz matrix.
\end{proof}

Now we will show that all tetradiagonal Toeplitz matrices \eqref{eq:Topelitz} are oscillatory if and only if admits a PBF.

\begin{pro}\label{pro:toeplitz_2}
 If $T$ is a oscillatory banded Toeplitz matrix as in
\eqref{eq:Topelitz} with $\betaup_1>\betaup_2>\betaup_3> 0$, then the determinants $\delta^{[N]}=\det T^{[N]}$ are explicitly given in terms of $\{\betaup_1,\betaup_2,\betaup_3\}$ as follows:
 \begin{align}\label{eq:toelitz_determinant}
 \delta ^{[n]}=\frac{\betaup_1^{n+2}}{(\betaup_1-\betaup_2)(\betaup_1-\betaup_3)}
 +\frac{\betaup_2^{n+2}}{(\betaup_2-\betaup_1)(\betaup_2-\betaup_3)}+\frac{\betaup_3^{n+2}}{(\betaup_3-\betaup_1)(\betaup_3-\betaup_2)}.
 \end{align}
\end{pro}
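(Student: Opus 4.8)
The plan is to recognize the recurrence relation \eqref{eq:recurrence_delta} satisfied by the determinants $\delta^{[n]}$ and solve it as a linear constant-coefficient difference equation. For the Toeplitz matrix \eqref{eq:Topelitz} the recurrence \eqref{eq:recurrence_delta} becomes
\begin{align*}
 \delta^{[n]} = c\,\delta^{[n-1]} - b\,\delta^{[n-2]} + a\,\delta^{[n-3]},
\end{align*}
a homogeneous linear recurrence of order three with constant coefficients. Its characteristic polynomial is $\lambda^3 - c\lambda^2 + b\lambda - a$. Using the substitutions \eqref{eq:a_betas}, namely $c=\betaup_1+\betaup_2+\betaup_3$, $b=\betaup_1\betaup_2+\betaup_1\betaup_3+\betaup_2\betaup_3$ and $a=\betaup_1\betaup_2\betaup_3$, this characteristic polynomial factors exactly as $(\lambda-\betaup_1)(\lambda-\betaup_2)(\lambda-\betaup_3)$. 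Since we assume $\betaup_1>\betaup_2>\betaup_3>0$, the three roots are distinct, so the general solution has the form $\delta^{[n]} = A\,\betaup_1^{\,n} + B\,\betaup_2^{\,n} + C\,\betaup_3^{\,n}$ for constants $A,B,C$ to be pinned down.

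First I would establish the initial conditions. Using $\delta^{[-1]}=1$ (as declared in the statement preceding \eqref{eq:gauss_borel_a_b_alpha}) together with $\delta^{[0]}=c$ and $\delta^{[1]}=c^2-b$ computed directly from the $1\times 1$ and $2\times 2$ truncations of \eqref{eq:Topelitz}, I would solve the resulting $3\times 3$ linear system for $A,B,C$. The claimed closed form \eqref{eq:toelitz_determinant} asserts precisely that
\begin{align*}
 A=\frac{\betaup_1^{2}}{(\betaup_1-\betaup_2)(\betaup_1-\betaup_3)}, \qquad
 B=\frac{\betaup_2^{2}}{(\betaup_2-\betaup_1)(\betaup_2-\betaup_3)}, \qquad
 C=\frac{\betaup_3^{2}}{(\betaup_3-\betaup_1)(\betaup_3-\betaup_2)},
\end{align*}
so that $A\betaup_i^n$ contributes the term $\betaup_i^{n+2}/\prod_{j\neq i}(\betaup_i-\betaup_j)$. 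Rather than invert the Vandermonde-type system by brute force, I would prefer to verify the proposed formula directly: check that the right-hand side of \eqref{eq:toelitz_determinant} satisfies the recurrence automatically (which it does, since each $\betaup_i^{\,n}$ is a solution) and then confirm it matches the three initial values. This reduces the whole argument to an identity verification.

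The only genuine computation is the verification of the initial conditions, which amounts to recognizing Vandermonde--Lagrange interpolation identities. Concretely, evaluating the right-hand side of \eqref{eq:toelitz_determinant} at $n=-1$ must give $1$, at $n=0$ must give $c=\betaup_1+\betaup_2+\betaup_3$, and at $n=1$ must give $c^2-b$. These are exactly the statements that the symmetric sums $\sum_i \betaup_i^{\,k+2}/\prod_{j\neq i}(\betaup_i-\betaup_j)$ equal the complete homogeneous symmetric polynomials $h_k(\betaup_1,\betaup_2,\betaup_3)$ for $k=1,0,-1$ (with $h_{-1}=0$ giving the normalization at $n=-1$). I expect the main obstacle, such as it is, to be organizing these symmetric-function identities cleanly; the cleanest route is to observe that $\sum_i \betaup_i^{\,n+2}/\prod_{j\neq i}(\betaup_i-\betaup_j)$ is the Lagrange interpolation of the monomial $t^{n+2}$ at the nodes $\betaup_1,\betaup_2,\betaup_3$ evaluated in a way that extracts $h_{n}$, whence both the recurrence and the initial data follow with essentially no case analysis.
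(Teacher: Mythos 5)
Your overall strategy is exactly the paper's: recognize that $\delta^{[n]}$ obeys the constant\-/coefficient recurrence $\delta^{[n]}=c\,\delta^{[n-1]}-b\,\delta^{[n-2]}+a\,\delta^{[n-3]}$, note that its characteristic polynomial is $(\lambda-\betaup_1)(\lambda-\betaup_2)(\lambda-\betaup_3)$ via \eqref{eq:a_betas}, write the general solution as a combination of $\betaup_i^{\,n}$, and fix the constants from three initial values (the paper does this by inverting the Vandermonde system; your Lagrange\-/interpolation reading of the coefficients is the same computation).

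However, the final verification step, as you state it, does not close. You take the (correct) initial data $\delta^{[-1]}=1$, $\delta^{[0]}=\det[c]=c$, $\delta^{[1]}=c^2-b$, and then assert that the right-hand side of \eqref{eq:toelitz_determinant} reproduces these values. It does not: by the very Lagrange identity you invoke, $\sum_i\betaup_i^{\,n+2}/\prod_{j\neq i}(\betaup_i-\betaup_j)=h_n(\betaup_1,\betaup_2,\betaup_3)$, so at $n=-1,0,1$ the formula gives $0,\,1,\,c$, not $1,\,c,\,c^2-b$. Your own sentence asserts both simultaneously (``at $n=-1$ must give $1$'' and ``$h_{-1}=0$ giving the normalization at $n=-1$''), which is a contradiction rather than a verification. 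The source of the trouble is an off\-/by\-/one in the indexing: with the paper's convention $T^{[N]}\in\R^{(N+1)\times(N+1)}$ one has $\det T^{[n]}=h_{n+1}$, so the closed form should carry the exponent $n+3$ (equivalently, the printed \eqref{eq:toelitz_determinant} is correct only if $T^{[N]}$ is read as $N\times N$). The paper's own proof quietly adopts the initial conditions $\delta^{[-2]}=\delta^{[-1]}=0$, $\delta^{[0]}=1$, which are consistent with the printed formula but not with $\det T^{[0]}=c$. To make your argument watertight you must pick one convention and carry it through: either keep your initial data and prove $\delta^{[n]}=\sum_i\betaup_i^{\,n+3}/\prod_{j\neq i}(\betaup_i-\betaup_j)$, or adopt the shifted initial data and prove the formula as printed; as written, the ``identity verification'' you defer to would fail on all three checks.
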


\begin{proof}
	The determinants $\delta^{[n]}=\det T^{[n]}$ are subject to the recursion relation
	\begin{align}\label{eq:constant_recursion}
		\delta^{[n]}-c \delta^{[n-1]}+b\delta^{[n-2]}-a\delta^{[n-3]}=0,
	\end{align}
	being the initial conditions: $\delta^{[-2]}=\delta^{[-1]}=0$ and $\delta^{[0]}=1$.
 Following the theory of recursion relations, see for example \cite{Elaydi}, we consider the so called characteristic polynomial 
\begin{align*}
p(\lambda) \coloneq\lambda^3-c\lambda^2+b\lambda-a,
\end{align*}
and notice that $p(\lambda)=\lambda^3f\big(-\frac{1}{\lambda}\big)$; hence, the characteristic roots are $\betaup_1,\betaup_2,\betaup_3>0$. If the roots are distinct, i.e., simple, then the general solution to the recursion \eqref{eq:constant_recursion} will be
\begin{align*}
C_1 \betaup_1^n+C_2\betaup_2^n+C_3\betaup_3^n
\end{align*}
for some constants $C_1,C_2,C_3$ 
determined by the initial conditions:
\begin{align*}
 \begin{bNiceMatrix}
 1& 1& 1\\
 \frac{1}{\betaup_1} & \frac{1}{\betaup_2}& \frac{1}{\betaup_3}\\
 \frac{1}{\betaup_1^2} & \frac{1}{\betaup_2^2}& \frac{1}{\betaup_3^2}
 \end{bNiceMatrix}\begin{bNiceMatrix}
 C_1 \\ C_2 \\ C_3
\end{bNiceMatrix}=\begin{bNiceMatrix}
1 \\0 \\0
\end{bNiceMatrix},
\end{align*}
so that \eqref{eq:toelitz_determinant} holds.
\end{proof}


\begin{coro}
For $T$ as in Proposition \ref{pro:toeplitz}, for the large $N$ ratio asymptotics of the determinants we find
\begin{align}\label{eq:limit_ratio}
 \lim_{N\to\infty}\frac{\delta^{[N]}}{\delta^{[N-1]}}=\betaup_1.
\end{align}
\end{coro}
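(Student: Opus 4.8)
The plan is to read the asymptotics straight off the closed form for $\delta^{[n]}$ supplied by Proposition~\ref{pro:toeplitz_2}, which already exhibits $\delta^{[n]}$ as a linear combination of the $n$-th powers of the three characteristic roots. First I would dispatch the generic situation $\betaup_1>\betaup_2>\betaup_3>0$, in which that proposition applies verbatim. Abbreviating
\[
 \delta^{[n]}=A_1\betaup_1^{\,n+2}+A_2\betaup_2^{\,n+2}+A_3\betaup_3^{\,n+2},\qquad
 A_1=\frac{1}{(\betaup_1-\betaup_2)(\betaup_1-\betaup_3)},
\]
with $A_2,A_3$ the analogous coefficients, the decisive observation is that $A_1>0$, since $\betaup_1$ strictly exceeds both $\betaup_2$ and $\betaup_3$; hence the term attached to the largest root is the dominant one.

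Next I would form the quotient and factor out the leading power. Dividing numerator and denominator of $\delta^{[N]}/\delta^{[N-1]}$ by $\betaup_1^{\,N+1}$ gives
\[
 \frac{\delta^{[N]}}{\delta^{[N-1]}}
 =\frac{A_1\betaup_1+A_2\betaup_2\big(\betaup_2/\betaup_1\big)^{N+1}+A_3\betaup_3\big(\betaup_3/\betaup_1\big)^{N+1}}
 {A_1+A_2\big(\betaup_2/\betaup_1\big)^{N+1}+A_3\big(\betaup_3/\betaup_1\big)^{N+1}}.
\]
Because $0<\betaup_3<\betaup_2<\betaup_1$, both $(\betaup_2/\betaup_1)^{N+1}$ and $(\betaup_3/\betaup_1)^{N+1}$ tend to $0$, and since $A_1>0$ the denominator stays bounded away from $0$ for large $N$. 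The whole expression therefore converges to $A_1\betaup_1/A_1=\betaup_1$, which is \eqref{eq:limit_ratio}.

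Finally I would close the (minor) gap that Proposition~\ref{pro:toeplitz} only requires $\betaup_1\geqslant\betaup_2\geqslant\betaup_3>0$, whereas the explicit formula was derived under strict inequalities. When roots collide I would return to the recursion~\eqref{eq:constant_recursion} and use its confluent general solution — powers $\betaup^n$ multiplied by polynomials in $n$ of degree one less than the multiplicity — so that $\delta^{[n]}$ is again an explicit combination dominated, for large $n$, by the block attached to the largest root $\betaup_1$; the same ``factor out the leading behaviour and let $N\to\infty$'' step then returns $\betaup_1$. Alternatively one may argue by continuity, perturbing $(\betaup_1,\betaup_2,\betaup_3)$ to nearby distinct triples. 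I expect the only genuinely delicate point to be confirming that the coefficient of the leading $\betaup_1$-contribution is strictly positive, so that the ratio tends to $\betaup_1$ rather than to a smaller root: in the distinct-root case this is immediate from $A_1>0$, and in the confluent cases it reduces to a short positivity check on the top coefficient of the corresponding block.
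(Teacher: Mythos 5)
Your proposal is correct and follows essentially the same route as the paper: read the limit off the explicit solution of the constant-coefficient recursion, with the term attached to the largest root $\betaup_1$ dominating, and handle the confluent cases via the polynomial-times-power form of the general solution. You are in fact slightly more careful than the paper in flagging the need to verify that the coefficient of the leading $\betaup_1$-block is nonzero.
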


\begin{proof}
 In the case of distinct characteristic roots $\betaup_1>\betaup_2>\betaup_3> 0$ is a direct consequence of Proposition \ref{pro:toeplitz_2}. 
 
 When the two smaller characteristic roots coincide $\betaup_2=\betaup_3$ the general solution will be
\begin{align*}
 C_1 \betaup_1^n+(C_2+C_3n)\betaup_2^n
\end{align*}
and the large $N$ ratio asymptotics of the determinant do not change. When the largest characteristic root is degenerate, with multiplicity two or three, then asymptotically the determinant will have a dominant term $q(n) \betaup_1^n$ with a polynomial $q$ such that $\deg q=1,2$ in the determinants and \eqref{eq:limit_ratio} is recovered.
\end{proof}

\begin{teo}[Infinite continued fractions and harmonic mean]
The continued fraction considered in~\eqref{eq:infinite_continued_fraction} for an oscillatory tetradiagonal Toeplitz $T$ as \eqref{eq:Topelitz} is a half of the harmonic mean of the two largest characteristic roots, i.e
 \begin{align*}
 \gaussk[1]&=\frac{\betaup_1\betaup_2}{\betaup_1+\betaup_2}.
 \end{align*}
\end{teo}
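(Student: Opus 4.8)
The plan is to reduce everything to the limit formula \eqref{eq:continued_fraction} of the preceding Corollary, which already expresses $\gaussk[1]$ as $\lim_{N\to\infty}\delta^{[N]}_1/(c_0\delta^{[N,1]}_1-a_2\delta^{[N,2]}_1)$, and then to evaluate these three determinant sequences explicitly in the uniform regime \eqref{eq:a_betas}. The crucial structural remark is that when $a_n=a$, $b_n=b$, $c_n=c$ the complementary submatrix $T^{[N,k]}_1$ of \eqref{eq:TN1k} is itself a banded Toeplitz matrix, with single subdiagonal $a$, diagonal $b$, first superdiagonal $c$ and second superdiagonal $1$, whose entries do \emph{not} depend on $k$. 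Hence $T^{[N,k]}_1$ depends on $N$ and $k$ only through its size $N-k$, and I would set $D_m\coloneq\det T^{[N,k]}_1$ with $m=N-k$, so that $\delta^{[N]}_1=D_N$, $\delta^{[N,1]}_1=D_{N-1}$ and $\delta^{[N,2]}_1=D_{N-2}$. Since here $c_0=c$ and $a_2=a$, formula \eqref{eq:continued_fraction} collapses to
\begin{align*}
 \gaussk[1]=\lim_{N\to\infty}\frac{D_N}{c\,D_{N-1}-a\,D_{N-2}}.
\end{align*}

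Next I would determine the scalar sequence $D_m$ exactly, in complete parallel with Proposition \ref{pro:toeplitz_2}. A short repeated cofactor expansion of $\det T^{[N,k]}_1$ along its last line (a couple of steps are needed because of the band pattern) yields the three-term recurrence
\begin{align*}
 D_m=b\,D_{m-1}-ac\,D_{m-2}+a^2 D_{m-3},
\end{align*}
with initial data $D_{-2}=D_{-1}=0$ and $D_0=1$ forced by running it backwards from $D_0=1$, $D_1=b$, $D_2=b^2-ac$. Its characteristic polynomial is $\lambda^3-b\lambda^2+ac\,\lambda-a^2$, and substituting \eqref{eq:a_betas} one checks through Vieta's relations that its three roots are precisely the pairwise products $\betaup_1\betaup_2$, $\betaup_1\betaup_3$, $\betaup_2\betaup_3$: their sum is $b$, the sum of their pairwise products is $ac$, and their product is $a^2$. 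For $\betaup_1>\betaup_2>\betaup_3>0$ these are distinct, positive and ordered as $\betaup_1\betaup_2>\betaup_1\betaup_3>\betaup_2\betaup_3$, so, exactly as in \eqref{eq:toelitz_determinant}, $D_m=B_1(\betaup_1\betaup_2)^m+B_2(\betaup_1\betaup_3)^m+B_3(\betaup_2\betaup_3)^m$ with strictly positive dominant coefficient $B_1=\frac{\betaup_1\betaup_2}{(\betaup_1-\betaup_3)(\betaup_2-\betaup_3)}$.

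Because $B_1\neq 0$, the dominant asymptotics $D_m\sim B_1(\betaup_1\betaup_2)^m$ hold, and therefore
\begin{align*}
 \gaussk[1]=\lim_{N\to\infty}\frac{D_N}{c\,D_{N-1}-a\,D_{N-2}}=\frac{(\betaup_1\betaup_2)^2}{c\,\betaup_1\betaup_2-a}.
\end{align*}
Using \eqref{eq:a_betas} once more, $c\,\betaup_1\betaup_2-a=(\betaup_1+\betaup_2+\betaup_3)\betaup_1\betaup_2-\betaup_1\betaup_2\betaup_3=\betaup_1\betaup_2(\betaup_1+\betaup_2)$, which gives $\gaussk[1]=\betaup_1\betaup_2/(\betaup_1+\betaup_2)$, that is, half the harmonic mean of $\betaup_1$ and $\betaup_2$. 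The degenerate cases with coincident $\betaup$'s are treated exactly as in the Corollary establishing \eqref{eq:limit_ratio}: a polynomial prefactor may appear in $D_m$, but it cancels in the ratio and does not change the limit. I expect the only genuine difficulty to be the cofactor bookkeeping that produces the recurrence for $D_m$, together with the observation that the root governing the large-$N$ asymptotics is the product $\betaup_1\betaup_2$ and not the single root $\betaup_1$ controlling \eqref{eq:limit_ratio}; it is precisely this feature that turns the answer into the half harmonic mean.
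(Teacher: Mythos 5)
Your proposal is correct and follows essentially the same route as the paper: both reduce to the limit formula \eqref{eq:continued_fraction}, observe that in the Toeplitz case $\delta_1^{[N,k]}$ depends only on $N-k$, derive the recurrence with characteristic polynomial $\lambda^3-b\lambda^2+ac\,\lambda-a^2$ whose roots are the pairwise products $\betaup_i\betaup_j$, and evaluate the ratio asymptotics. The only cosmetic difference is that you identify the roots via Vieta's relations where the paper uses the identity $q(t)=-\tfrac{t^3}{a}p\big(\tfrac{a}{t}\big)$; the computations and conclusion coincide.
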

\begin{proof}
In order to compute the continued fraction $\gaussk[1]$ according to \eqref{eq:continued_fraction} we only require the use the determinants $\delta^{[N]}_1$ of 
\begin{align*}
 T^{[N]}_1&= 
 \begin{bNiceMatrix}
 b&c&1 &0&\Cdots&&0\\
 a&b&c&1& \Ddots &&\Vdots\\
 0 &a&b&c &1&&\\
 \Vdots&\Ddots&\Ddots&\Ddots&\Ddots& \Ddots&0\\
 &&&&&&1\\
 &&&&a&b&c\\
 0&\Cdots&&&0&a&b
 \end{bNiceMatrix}
\in\R^{N\times N},
\end{align*}
as in this Toeplitz case $\delta_1^{[N,k]}=\delta_1^{[N-k]}$. These determinants are subject to the following uniform recursion relation
\begin{align}\label{eq:constant_recursion_1}
 \delta_1^{[n]}-b \delta_1^{[n-1]}+ac\delta_1^{[n-2]}-a^2\delta_1^{[n-3]}=0,
\end{align}
as follows from an expansion along the last column. 
The initial conditions are 
$\delta_1^{[-2]}=\delta_1^{[-1]}=0$ and $\delta_1^{[0]}=1$.
The characteristic roots $\gammaup_1,\gammaup_2,\gammaup_3$ are the zeros of
\begin{align*}
q(t)=t^3-bt^2+ac t-a^2,
\end{align*}
and we find that $p(t)=-\frac{t^3}{a^2}q\big(\frac{a}{t}\big)$ or $q(t)=-\frac{t^3}{a}p\big(\frac{a}{t}\big)$. Hence, the characteristic roots are $\gammaup=\frac{a}{\betaup}$, that arranged in decreasing order can be written as follows
\begin{align*}
 \gammaup_1&=\betaup_1\betaup_2, &\gammaup_2&=\betaup_1\betaup_3,& \gammaup_3&=\betaup_3\betaup_2.
\end{align*}
Let us assume that the roots are distinct, i.e., simple, the other degenerate cases can be treated similarly. Then, the general solution to the recursion \eqref{eq:constant_recursion_1} will be
\begin{align*}
 C_1 \gammaup_1^n+C_2\gammaup_2^n+C_3\gammaup_3^n,
\end{align*}
for some constants $C_1,C_2,C_3$ 
determined by the initial conditions:
\begin{align*}
 \begin{bNiceMatrix}
 1& 1& 1\\
 \frac{1}{\gammaup_1} & \frac{1}{\gammaup_2}& \frac{1}{\gammaup_3}\\
 \frac{1}{\gammaup_1^2} & \frac{1}{\gammaup_2^2}& \frac{1}{\gammaup_3^2}
 \end{bNiceMatrix}\begin{bNiceMatrix}
 C_1 \\ C_2 \\ C_3
 \end{bNiceMatrix}=\begin{bNiceMatrix}
 1 \\0 \\0
 \end{bNiceMatrix}
\end{align*}
and, proceeding as above, we find
\begin{align*}
\delta_1 ^{[n]}
 &=\frac{\betaup_1^{n+1}\betaup_2^{n+1}}{(\betaup_2-\betaup_3)(\betaup_1-\betaup_3)}+
 \frac{\betaup_1^{n+1}\betaup_3^{n+1}}{(\betaup_3-\betaup_2)(\betaup_1-\betaup_2)}+
 \frac{\betaup_2^{n+1}\betaup_3^{n+1}}{(\betaup_3-\betaup_1)(\betaup_2-\betaup_1)}.
\end{align*}
Hence, according to \eqref{eq:continued_fraction} with $c_0 = c$ and $a_2 = a$ we have
\begin{align*}
 \gaussk[1]
 &
 =\lim_{n\to\infty}\frac{\dfrac{\betaup_1^{n+1}\betaup_2^{n+1}}{(\betaup_2-\betaup_3)(\betaup_1-\betaup_3)}}{
 (\betaup_1+\betaup_2+\betaup_3)\dfrac{\betaup_1^{n}\betaup_2^{n}}{(\betaup_2-\betaup_3)(\betaup_1-\betaup_3)}-
 \betaup_1\betaup_2\betaup_3 \dfrac{\betaup_1^{n-1}\betaup_2^{n-1}}{(\betaup_2-\betaup_3)(\betaup_1-\betaup_3)}
}\\&=\frac{\betaup_1^2\betaup_2^2}{(\betaup_1+\betaup_2+\betaup_3)\betaup_1\betaup_2-\betaup_1\betaup_2\betaup_3}
 ,
\end{align*}
and after some manipulations the representation for $\gaussk[1]$ follows.
\end{proof}
\begin{teo}
	A tetradiagonal Toeplitz matrix is oscillatory if and only if admits a PBF.
\end{teo}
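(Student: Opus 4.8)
The plan is to establish both implications from the results already assembled, with the entire weight of the converse resting on the strict positivity of the infinite continued fraction $\gaussk[1]$.

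For the easy direction I would assume that $T$ admits a PBF $T = L_1 L_2 U$ as in \eqref{eq:bidiagonal}--\eqref{eq:LU_J_oscillatory_factors} with every $\alpha_j > 0$. Then each truncation factors as $T^{[N]} = L_1^{[N]} L_2^{[N]} U^{[N]}$, a product of bidiagonal factors that are individually $\operatorname{InTN}$ (the two unit-diagonal lower bidiagonals with positive subdiagonals, and the upper bidiagonal with positive diagonal and unit superdiagonal). Invoking the Cauchy--Binet invariance of $\operatorname{InTN}$ under products recalled in the Introduction, $T^{[N]}$ is $\operatorname{InTN}$; since its first superdiagonal entries are all $1$ and its first subdiagonal entries $b_n = \alpha_{3n}\alpha_{3n-2} + \alpha_{3n-1}\alpha_{3n-2} + \alpha_{3n-1}\alpha_{3n-3}$ are strictly positive, the Gantmacher--Krein Criterion (Theorem \ref{teo:Gantmacher--Krein Criterion}) makes $T^{[N]}$ oscillatory, whence $T$ is oscillatory.

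For the converse I would start from an oscillatory tetradiagonal Toeplitz $T$ and use Proposition \ref{pro:toeplitz} to produce $\betaup_1 \geq \betaup_2 \geq \betaup_3 > 0$ realizing $a,b,c$ as in \eqref{eq:a_betas}. The preceding theorem gives the explicit value $\gaussk[1] = \frac{\betaup_1\betaup_2}{\betaup_1 + \betaup_2}$, which is strictly positive because $\betaup_1, \betaup_2 > 0$. Hence the interval $(0, \gaussk[1])$ is nonempty, and picking any $\alpha_2$ there places us exactly in the hypotheses of the Theorem on the PBF in the semi-infinite case, which supplies a unique positive sequence $\{\alpha_1, \alpha_3, \alpha_4, \dots\}$ completing $\alpha_2 > 0$ to a genuine PBF of $T$ with $L_1, L_2, U \in \operatorname{InTN}$.

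I expect the only delicate step to be the strict positivity of $\gaussk[1]$: the Introduction and \S\ref{sec:3} stress that in general $\gaussk[1]$ can vanish, in which case no admissible $\alpha_2 > 0$ exists and the construction breaks down. What makes the Toeplitz case work is precisely the harmonic-mean formula $\gaussk[1] = \frac{\betaup_1\betaup_2}{\betaup_1 + \betaup_2}$ together with the ordering $\betaup_1 \geq \betaup_2 \geq \betaup_3 > 0$ forced by $a > 0$: the two dominant characteristic roots are strictly positive, so the obstruction present in the general setting never arises here and the equivalence closes.
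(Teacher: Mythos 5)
Your proof is correct and follows exactly the route the paper intends: the paper leaves this theorem without an explicit proof because it is an immediate consequence of the two preceding results, namely that a PBF forces the truncations to be products of $\operatorname{InTN}$ bidiagonal factors (hence oscillatory by Gantmacher--Krein), and that for an oscillatory Toeplitz matrix the harmonic-mean formula gives $\gaussk[1]=\betaup_1\betaup_2/(\betaup_1+\betaup_2)>0$, so the semi-infinite PBF theorem applies with any $\alpha_2\in(0,\gaussk[1])$. Your identification of the strict positivity of $\gaussk[1]$ as the crux is exactly right.
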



%
%
%

\section{Retractions and tails} \label{sec:5}
We will show that from any given oscillatory tetradiagonal matrix we can construct tetradiagonal matrices with PBF in several ways. Also, we will find that oscillatory matrices are organized in rays, the origin of the ray is a oscillatory matrix that do not have a PBF and all the interior points of the ray have a PBF.

 Given a TN matrix $A$, then $A+sE_{1,1}$ is also TN for $s\geqslant -\frac{\det A}{\det A(\{1\})}$, see \cite[Section 9.5]{Fallat-Johnson} on retractions of TN matrices and in particular the proof of \cite[Lemma 9.5.2]{Fallat-Johnson}.
When $s$ is a negative number this is known as a retraction. As we will show, for any matrix with a PBF there is a retraction that is oscillatory but with $\alpha_2=0$ and vice versa. Following \cite{Fallat-Johnson}, we use the notation $E_{1,2}(s)\coloneq I+s E_{1,2}$ for the bidiagonal matrix with a nonzero contribution only possibly at the entry in the second row and first column.
\begin{teo}[Retractions and PBF]
\begin{enumerate}
 \item Given an oscillatory tetradiagonal matrix $T$ as in \eqref{eq:monic_Hessenberg} then the tetradiagonal matrix
 
 \begin{align}\label{eq:Ts}
 T_s&=E_{1,2}(s)T=\left[\begin{NiceMatrix}[columns-width = .7cm]
 c_0 & 1&0&\Cdots&\\[-3pt]
 b_1+sc_0&c_1+s & 1&\Ddots&&\\
 a_2&b_2&c_2&1&&\\
 0&a_3&b_3&c_3&1&\\
 \Vdots&\Ddots&\Ddots&\Ddots&\Ddots&\Ddots\\&&&&
 \end{NiceMatrix}\right], 
 \end{align}
has a PBF for $s>-\gaussk[1]$.
 \item Given a tetradiagonal matrix $T$ as in \eqref{eq:monic_Hessenberg} that admits a PBF, i.e., with $\gaussk[1]>0$, then
 the tetradiagonal matrix 
 \begin{align*} 
 \tilde T&=E_{1,2}\Big(-\gaussk[1]\Big)T=\left[\begin{NiceMatrix}[columns-width = 1cm]
 c_0 & 1&0&\Cdots&\\[-3pt]
 b_1-\gaussk[1]c_0&c_1-\gaussk[1]& 1&\Ddots&&\\
 a_2&b_2&c_2&1&&\\
 0&a_3&b_3&c_3&1&\\
 \Vdots&\Ddots&\Ddots&\Ddots&\Ddots&\Ddots\\&&&&
 \end{NiceMatrix}\right], 
 \end{align*}
is a oscillatory matrix that does not admit a PBF, i.e. $\tilde {\gaussk}[1]=0$.
\end{enumerate}
\end{teo}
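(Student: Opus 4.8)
The plan is to understand precisely how left--multiplication by $E_{1,2}(s)$ disturbs the Gauss--Borel factorization $T=LU$ of \eqref{eq:Gauss-Borel}. Because $E_{1,2}(s)$ adds $s$ times the first row to the second row, and the first two rows of $L$ are $(1,0,0,\dots)$ and $(\mathscr m_1,1,0,\dots)$, I would first record that $E_{1,2}(s)L=\tilde L$, where $\tilde L$ is obtained from $L$ by the single replacement $\mathscr m_1\mapsto\mathscr m_1+s$, every other entry $\mathscr m_2,\mathscr m_3,\dots$ and $\mathscr l_2,\mathscr l_3,\dots$ being untouched. Hence
\[
 T_s=E_{1,2}(s)\,L\,U=\tilde L\,U ,
\]
which is again an $LU$ factorization with the \emph{same} upper factor $U$; its leading principal minors equal $\alpha_1\alpha_4\cdots\alpha_{3N+1}>0$, so it is the Gauss--Borel factorization of $T_s$. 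Inserting the lone modification $\mathscr m_1\mapsto\mathscr m_1+s$ into \eqref{eq:infinite_continued_fraction} and using that the tail $\gaussk[2]>0$ is unaffected, I obtain the key identity $\tilde{\gaussk}[1]=(\mathscr m_1+s)-\mathscr l_2/\gaussk[2]=\gaussk[1]+s$.

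For part (i) I would argue directly, without presupposing that $T_s$ is oscillatory. The factor $U$ is already positive since $T$ is oscillatory (Theorem \ref{teo:firstLU}(ii)), so producing a PBF of $T_s=\tilde L\,U$ reduces to a positive bidiagonal splitting $\tilde L=L_1L_2$. By the Lemma on the bidiagonal factorization of the lower factor together with Proposition \ref{pro:bidiagonalL}, and by the very construction used in the semi--infinite PBF theorem, such a positive splitting exists exactly for $0<\alpha_2<\tilde{\gaussk}[1]$. The hypothesis $s>-\gaussk[1]$ gives $\tilde{\gaussk}[1]=\gaussk[1]+s>0$, so this interval is nonempty and any $\alpha_2$ in it yields $T_s=L_1L_2U$ with all $\alpha_j>0$, i.e. a PBF; one may add that $\tilde b_1=(\mathscr m_1+s)c_0>0$, whence $T_s$ is in fact oscillatory by Gantmacher--Krein.

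For part (ii) I would take $s=-\gaussk[1]$, which is legitimate because the PBF assumption gives $\gaussk[1]>0$. The identity above then yields $\tilde{\gaussk}[1]=\gaussk[1]-\gaussk[1]=0$, and since a PBF would demand $0<\alpha_2<\tilde{\gaussk}[1]=0$, the matrix $\tilde T$ admits no PBF. The main obstacle, and the delicate part, is to show that $\tilde T$ is nevertheless oscillatory. I would work truncation by truncation, writing $\tilde T^{[N]}=\tilde L^{[N]}U^{[N]}$ and exploiting that the \emph{finite} continued fractions stay strictly positive although their limit vanishes: with $\tilde{\mathscr m}_1=\mathscr m_1-\gaussk[1]=\mathscr l_2/\gaussk[2]>0$ one has
\[
 \tilde{\gaussk}[N,1]=\tilde{\mathscr m}_1-\frac{\mathscr l_2}{\gaussk[N,2]}
 =\mathscr l_2\Big(\frac{1}{\gaussk[2]}-\frac{1}{\gaussk[N,2]}\Big)>0,
\]
the last inequality coming from the strict decrease $\gaussk[N,2]>\gaussk[2]$ established in the theorem on infinite continued fractions. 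Thus $\alpha_2=0\in[0,\tilde{\gaussk}[N,1])$ is an admissible choice, so Proposition \ref{pro:bidiagonalL} gives $L_1^{[N]},L_2^{[N]}\in\operatorname{InTN}$, hence $\tilde L^{[N]}\in\operatorname{InTN}$ and $\tilde T^{[N]}=\tilde L^{[N]}U^{[N]}\in\operatorname{InTN}$. Finally, the first superdiagonal of $\tilde T^{[N]}$ consists of ones and its first subdiagonal is $(\tilde b_1,b_2,\dots,b_N)$ with $\tilde b_1=b_1-\gaussk[1]c_0=(\mathscr m_1-\gaussk[1])c_0=\tilde{\mathscr m}_1c_0>0$, so the Gantmacher--Krein Criterion (Theorem \ref{teo:Gantmacher--Krein Criterion}) yields that each $\tilde T^{[N]}$, and therefore $\tilde T$, is oscillatory, completing the argument.
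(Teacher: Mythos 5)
Your proposal is correct and follows essentially the same route as the paper: both arguments fix the upper factor $U$, observe that left multiplication by $E_{1,2}(s)$ only shifts $\mathscr m_1\mapsto\mathscr m_1+s$ in the lower factor (equivalently the $(1,1)$ entry of the auxiliary Jacobi matrix), deduce $\gaussk[1]\mapsto\gaussk[1]+s$ for the continued fraction, and conclude via the positivity of the finite truncations $\gaussk[N,1]+s$ together with the Gantmacher--Krein criterion. The only cosmetic difference is that you certify the InTN character of the modified lower factor through the explicit bidiagonal splitting of Proposition \ref{pro:bidiagonalL} with $\alpha_2=0$, whereas the paper cites the retraction lemmas of Fallat--Johnson; both are valid.
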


\begin{proof}
\begin{enumerate}
 \item The Jacobi matrix $J_s^{[N,1]}=J^{[N,1]}+s E_{1,1}$ is TN for $s\geqslant -\frac{\Delta_{N,1}}{\Delta_{N,2}}$ and InTN for $s> -\frac{\Delta_{N,1}}{\Delta_{N,2}}=-\gaussk[N,1]$. Thus, attending to Theorem \ref{teo:Gantmacher--Krein Criterion}, is an oscillatory matrix for $s> -\gaussk[N,1]$. Then, the corresponding lower unitriangular matrix $L^{[N]}_s$ that has $J_s^{[N,1]}$ as complementary submatrix, $L^{[N]}_s(\{1\},\{N+1\})=J_s^{[N,1]}$ (obtained by deleting first row and last column), is InTN for $s>-\gaussk[N,1]$. This is a consequence of \cite[Lemma 3.3.4]{Fallat-Johnson}. 
The continued fraction $\gaussk_s[N,1]$, corresponding to the oscillatory Jacobi matrix $J^{[N,1]}_s$, is $ \gaussk[N,1]+s$.

Now, let us consider the banded Hessenberg matrix
by defining $T^{[N]}_s=L^{[N]}_s U^{[N]}$, which is clearly InTN for $s> -\gaussk[N,1]$ as its factors are. A direct computation shows that
\begin{align*}
 T_s^{[N]}= \begin{bNiceMatrix}[columns-width = auto]
 c_0 & 1&0&\Cdots&&&0\\[-3pt]
 b_1+sc_0&c_1+s&1&\Ddots&&&\Vdots\\
 a_2&b_2&c_2&1&&&\\
 0&a_3&b_3&c_3&1&&\\
 \Vdots&\Ddots&\Ddots&\Ddots&\Ddots& \Ddots&0\\
 &&&a_{N-1}&b_{N-1}&c_{N-1}&1\\
 0&\Cdots&&0&a_{N}&b_{N}&c_N
 \end{bNiceMatrix}.
\end{align*}
Observe that $\mathscr m_1=\frac{b_1}{c_0}=\gaussk[2,1]$, and recall that $\{\gaussk[n,1]\}_{n=2}^\infty$ is positive and decreasing sequence and, consequently, $b_1+sc_0>0$ for $s>-\gaussk[N,1]$ and $N\in\{2,3,\dots\} $. Therefore, using the Gantmacher--Krein Criterion, 
we conclude that $T_s^{[N]}$ is oscillatory.
Finally, for the large $N$ limit with $s>-\gaussk[1]$, the matrix 
 $T_s$ is oscillatory and has $\gaussk_s[1]>0$.
 \item The retraction $\tilde{ J}^{[N,1]}=J^{[N,1]}-\gaussk[1]E_{1,1}$ of the Jacobi matrix of $T^{[N]}$is oscillatory. This is a direct consequence of the fact that $0<\gaussk[1]<\gaussk[n,1]$, for all $n\in\N$. 
 The associated finite continued fraction is $\tilde{\gaussk}[N,1]=\gaussk[N,1]-\gaussk[1]$.
 To continue, let us consider as in the previous discussion the unitriangular matrix $\tilde L^{[N]}$ such that $\tilde J^{[N,1]}=\tilde L^{[N]}(\{1\},\{N+1\})$ is a complementary submatrix, deleting first row and last column. This triangular matrix is InTN, and the matrix $\tilde T^{[N]}=\tilde L^{[N]} U^{[N]}$ is oscillatory, as well, with associated continued fraction $\tilde{\gaussk}[N,1]=\gaussk[N,1]-\gaussk[1]$. Thus, in the large $N$ limit the semi-infinite banded Hessenberg matrix $\tilde T$ is oscillatory with $\tilde {\gaussk}[1]=0$, i.e. no PBF exits.
\end{enumerate}
\end{proof}
\begin{coro}
 If the tetradiagonal matrix $T$ given in \eqref{eq:monic_Hessenberg} has $\gaussk[1]=0$, then $T_s$ as in \eqref{eq:Ts}has a PBF for~$s>0$.
\end{coro}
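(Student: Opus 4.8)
The plan is to obtain this as an immediate specialization of part (i) of the preceding Theorem (Retractions and PBF), reading off the degenerate value of the threshold on the shift parameter. The first thing I would make explicit is that the very hypothesis ``$T$ has $\gaussk[1]=0$'' presupposes that the infinite continued fraction in \eqref{eq:infinite_continued_fraction} is defined and convergent. By the Theorem on infinite continued fractions, convergence of $\gaussk[1]$ (together with the positivity and monotonicity of the finite approximants $\gaussk[n,1]$) is guaranteed precisely under the oscillatory hypothesis, so $T$ sits within the standing assumptions of the Retractions and PBF theorem, and $\gaussk[1]$ is genuinely $0$ rather than undefined.

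With that in place, I would simply invoke part (i) of that theorem, which asserts that for oscillatory $T$ as in \eqref{eq:monic_Hessenberg} the shifted matrix $T_s=E_{1,2}(s)T$ displayed in \eqref{eq:Ts} admits a PBF for every $s>-\gaussk[1]$. Substituting the hypothesis $\gaussk[1]=0$ collapses the lower threshold $-\gaussk[1]$ to $0$, so that $T_s$ has a PBF for all $s>0$, which is exactly the assertion of the corollary.

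There is essentially no obstacle to overcome: the entire analytic content---the construction of the oscillatory Jacobi retraction $J^{[N,1]}_s=J^{[N,1]}+sE_{1,1}$, the positivity $b_1+sc_0>0$ for $s>-\gaussk[N,1]$, and the large-$N$ limit yielding $\gaussk_s[1]>0$---is already carried by the proof of part (i). The corollary is precisely the boundary instance of that result in which the window $(-\gaussk[1],\infty)$ of admissible shifts degenerates to $(0,\infty)$. The only point I would flag for the reader is the implicit oscillatory character of $T$, which I would state at the outset so that the appeal to the Retractions and PBF theorem is fully justified.
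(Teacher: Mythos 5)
Your proposal is correct and matches the paper's (implicit) argument: the corollary is left without a separate proof precisely because it is the specialization of part (i) of the Retractions and PBF theorem to the case $\gaussk[1]=0$, so the admissible window $s>-\gaussk[1]$ becomes $s>0$. Your remark that the hypothesis $\gaussk[1]=0$ presupposes the oscillatory character of $T$ (so that the infinite continued fraction is defined and the theorem applies) is a reasonable clarification consistent with the paper's standing assumptions.
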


The next result is based on the fact that the tails $\gaussk[2],\gaussk[3],\dots$ of the continued fraction $\gaussk[1]$ are positive.

\begin{teo}[Tails and positive bidiagonal factorization]\label{theorem:tails and retractions}
 If $T$ is an oscillatory banded Hessenberg matrix as in \eqref{eq:monic_Hessenberg} then the matrices
 \begin{align*}
 T^{(2)}&\coloneq
 \left[\begin{NiceMatrix}
 c_{1}-\frac{b_1}{c_0} & 1&0&\Cdots&\\[-3pt]
 b_{2}-\frac{a_2}{c_0}&c_{2}& 1&\Ddots&&\\
 a_{3}&b_{3}&c_{3}&1&&\\
 0&a_{4}&b_{4}&c_{4}&1&\\
 \Vdots&\Ddots&\Ddots&\Ddots&\Ddots&\Ddots\\&&&&
 \end{NiceMatrix}\right], \\
 T^{(k+1)}&\coloneq
 \left[\begin{NiceMatrix}[columns-width = 1cm]
 \alpha_{3k+1}& 1&0&\Cdots&&\\[-3pt]
 (c_{k+1}+\alpha_{3k+4})\alpha_{3k+1}&c_{k+1}& 1&\Ddots&&\\
 a_{k+2}&b_{k+2}&c_{k+2}&1&&\\
 0&a_{k+3}&b_{k+3}&c_{k+3}&1&\\
 \Vdots&\Ddots&\Ddots&\Ddots&\Ddots&\Ddots\\
 &&&&&
 \end{NiceMatrix}\right], &k&\in\{2,3,\dots\} ,
 \end{align*}
have a PBF with associated continued fractions $\gaussk[2]$ and $\gaussk[k+1]$, $k\in\{2,3,\dots\}$, respectively.
\end{teo}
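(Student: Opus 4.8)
The plan is to reduce the claim about $T^{(k+1)}$ to the already-established semi-infinite PBF theorem by exhibiting $T^{(k+1)}$ as an oscillatory matrix whose associated infinite continued fraction is precisely the tail $\gaussk[k+1]$, which is known to be positive by the last part of the theorem on infinite continued fractions. First I would observe that each $T^{(k+1)}$ is again a tetradiagonal lower Hessenberg matrix of the form \eqref{eq:monic_Hessenberg}, so it suffices to check two things: that it is oscillatory, and that its version of the continued fraction $\gaussk[1]$ equals the tail $\gaussk[k+1]$ of the original matrix. Granting these, the PBF theorem in the semi-infinite case applies verbatim and yields the desired positive bidiagonal factorization, since $\gaussk[k+1]>0$ guarantees that the admissible choice $\alpha_2\in[0,\gaussk[k+1])$ is nonempty.

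The heart of the construction is recognizing where these matrices come from. The shifted top-left entry $\alpha_{3k+1}$ (respectively $c_1-\frac{b_1}{c_0}=c_1-\mathscr m_1$ in the $k=1$ case) together with the modified $(2,1)$-entry is exactly what arises when one strips off the leading bidiagonal factors in the PBF and rewrites the remaining part as a new tetradiagonal Hessenberg matrix. Concretely, I would relate the Gauss--Borel data of $T^{(k+1)}$ to that of $T$: the auxiliary Jacobi matrix attached to $T^{(k+1)}$ should be identifiable with a principal submatrix $J^{[N,k+1]}$ of the auxiliary Jacobi matrix of $T$, shifted appropriately. Since Theorem \ref{teo:firstLU2} shows each $J^{[N,k+1]}$ is oscillatory with $\Delta_{N,k+1}>0$, the Gantmacher--Krein Criterion (Theorem \ref{teo:Gantmacher--Krein Criterion}) combined with positivity of the first sub- and super-diagonal entries of $T^{(k+1)}$ delivers that $T^{(k+1)}$ is oscillatory. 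One must verify the positivity of the $(2,1)$-entries: for $T^{(2)}$ this is $b_2-\frac{a_2}{c_0}$, which I expect to follow from the relation $\mathscr m_n>\frac{\Delta_{n,1}}{\Delta_{n-1,1}}$ and the bounds in part iv) of Theorem \ref{teo:firstLU}; for $T^{(k+1)}$ the factor $(c_{k+1}+\alpha_{3k+4})\alpha_{3k+1}$ is manifestly positive since all $\alpha_j>0$ and the $c_n>0$ by Proposition \ref{pro:abc>0}.

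The second ingredient is the identification of the continued fraction. I would show that the auxiliary Jacobi matrix of $T^{(k+1)}$ has diagonal and subdiagonal entries giving rise to precisely the tail $\gaussk[k+1]=\mathscr m_{k+1}-\cfrac{\mathscr l_{k+2}}{\mathscr m_{k+2}-\cdots}$ from Definition \ref{def:infinite_continued_fraction}. This comes from matching the Gauss--Borel entries $\mathscr m_{k+1},\mathscr l_{k+2},\dots$ of $T$ with those of $T^{(k+1)}$, which is essentially the recursion \eqref{eq:dual_recursion_Delta} together with the defining large-$N$ limit \eqref{eq:continued_fraction} applied to the shifted matrix. Once the continued fraction attached to $T^{(k+1)}$ is seen to be the convergent positive tail $\gaussk[k+1]$, the PBF theorem furnishes the factorization with that continued fraction, completing the argument.

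The main obstacle I anticipate is the bookkeeping in the first two steps: carefully verifying that the specific top-row and second-row entries of $T^{(k+1)}$ are exactly those that make its auxiliary Jacobi matrix coincide with the tail structure, and confirming the positivity of the $(2,1)$-entry $b_2-\frac{a_2}{c_0}$ for the $k=1$ case. This positivity is not purely formal and requires invoking the determinant inequalities of Theorem \ref{teo:firstLU}; I would isolate it as the one genuinely nontrivial estimate, whereas the oscillatory character and the continued-fraction identification are then essentially consequences of the results already assembled in Theorems \ref{teo:firstLU} and \ref{teo:firstLU2}.
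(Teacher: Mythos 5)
Your proposal is correct and follows essentially the same route as the paper: recognize $T^{(k+1)}$ as the product of the shifted lower unitriangular factor (whose complementary Jacobi submatrix is the oscillatory $J^{[N,k+1]}$ carrying the tail $\gaussk[k+1]$) with the shifted upper bidiagonal factor $U^{(k+1)}$, check positivity of the first sub- and superdiagonal entries, apply the Gantmacher--Krein criterion, and invoke the semi-infinite PBF theorem with the positive tail as the continued fraction. The only minor remark is that the positivity of $b_2-\tfrac{a_2}{c_0}=b_2-\mathscr l_2=\mathscr m_2\alpha_4$ follows directly from the Gauss--Borel relations and part ii) of Theorem \ref{teo:firstLU}, so it is no less formal than the general case.
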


\begin{proof}
For $k\in\N$, the tail $\gaussk[k+1]$ is the continued fraction of the Jacobi matrix 
\begin{align}
 J^{(k+1)}&\coloneq \left[\begin{NiceMatrix}[columns-width = 1cm]
 \mathscr m_{k+1}& 1 &0&\Cdots&\\
 \mathscr l_{k+2} &\mathscr m_{k+2}& 1& \Ddots&\\
 0&\mathscr l_{k+3}&\mathscr m_{k+3}&1&\\
 \Vdots& \Ddots& \Ddots & \Ddots&\Ddots\\
 &&&&
 \end{NiceMatrix}\right],
\end{align}
that is oscillatory and a submatrix of
\begin{align*}
 L^{(k+1)}&=
 \left[\begin{NiceMatrix}[columns-width = 1cm]
 1 &0&\Cdots&&\\
 \mathscr m_{k+1}& 1 &\Ddots&&\\
 \mathscr l_{k+2} &\mathscr m_{k+2}& 1& &\\
 0&\mathscr l_{k+3}&\mathscr m_{k+3}&1&\\
 \Vdots& \Ddots& \Ddots & \Ddots&\Ddots\\
 &&&&
 \end{NiceMatrix}\right] ,
\end{align*}
with all its leading principal submatrices InTN.
We introduce the upper triangular matrix
\begin{align*}
 U^{(k+1)}=\left[\begin{NiceMatrix}[columns-width = auto]
 \alpha_{3k+1} & 1 &0&\Cdots&\\
 0& \alpha_{3k+4} & 1 &\Ddots&\\
 0&0&\alpha_{3k+7}&\Ddots&\\
 \Vdots& \Ddots& \Ddots &\Ddots &
 \end{NiceMatrix}\right], 
\end{align*}
with all its leading principal submatrices InTN,
and the corresponding tetradiagonal matrix $T^{(k+1)}=L^{(k+1)}U^{(k+1)}$ is
\begin{align*}
 T^{(k+1)}= 
\left[\begin{NiceMatrix}[columns-width = .7cm]
 c_{k}-\mathscr m_{k} & 1&0&\Cdots&\\[-3pt]
 b_{k+1}-\mathscr l_{k+1}&c_{k+1}& 1&\Ddots&&\\
 a_{k+2}&b_{k+2}&c_{k+2}&1&&\\
 0&a_{k+3}&b_{k+3}&c_{k+3}&1&\\
 \Vdots&\Ddots&\Ddots&\Ddots&\Ddots&\Ddots\\&&&&
 \end{NiceMatrix}\right], 
\end{align*}
with $c_{k}-\mathscr m_{k} =\alpha_{3k+1}>0$ and $b_{k+1}-\mathscr l_{k+1}=\mathscr m_{k+1}\alpha_{3k+1}=(c_{k+1}+\alpha_{3k+4})\alpha_{3k+1}>0$ is, according to the Gantmacher--Krein Criterion, an oscillatory matrix, having its continued fraction $\gaussk^{(k+1)}[1]$ equal to the tail $\gaussk[k+1]$ that is positive. To get $T^{(2)}$ recall that
$\alpha_1=c_0$, $\mathscr m_1\alpha_1=b_1$ and $\mathscr l_2\alpha_1=a_2$. 
\end{proof}

We end the paper with a result that ensures, given any oscillatory tetradiagonal matrix, the finding of associated tetradiagonal matrices with a PBF.
\begin{teo}\label{theorem:spectral_representation2}
	Let us assume that
the Hessenberg matrix $T$ in~\eqref{eq:monic_Hessenberg} is oscillatory. Then, the tetradiagonal matrices
		\begin{align*}
			\check T &\coloneq \left[\begin{NiceMatrix}[columns-width = auto]
				b_1 & 1&0&\Cdots&\\[-3pt]
				a_2c_1&b_2&1&\Ddots&&\\
				a_2a_3&a_3c_2&b_3&1&&\\
				0&a_3a_4&a_4c_3&b_4&1&\\
				\Vdots&\Ddots&\Ddots&\Ddots&\Ddots& \Ddots\\
				&&&&
			\end{NiceMatrix}\right], &\\	\check T^{[k]} &\coloneq 
			\left[\begin{NiceMatrix}
				b_{k+1} -\mathscr m_{k+1}& 1&0&\Cdots&&\\
				a_{k+2}c_{k+1}&b_{k+2}&1&\Ddots&&\\
				a_{k+2}a_{k+3}&a_{k+3}c_{k+2}&b_{k+3}&1&&\\
				0&a_{k+3}a_{k+4}&a_{k+4}c_{k+3}&b_{k+4}&1&\\
				\Vdots&\Ddots&\Ddots&\Ddots&\Ddots& \Ddots
				\\
			\end{NiceMatrix}\right], & k&\in\N,
		\end{align*}
	admit a PBF.
\end{teo}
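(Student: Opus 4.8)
The plan is to realise $\check T$ and $\check T^{[k]}$ as positive diagonal conjugates of the transposes of the complementary submatrices $T^{[N]}_1$ and $T^{[N,k]}_1$ of Definition~\ref{def:associated_submatrices}, and then to read off a positive bidiagonal factorization from the factorizations \eqref{eq:LU_bis} and \eqref{eq:trunactions JNK}. First I would introduce the positive diagonal matrix $D=\diag(d_0,d_1,\dots)$ fixed by $d_0=1$ and $d_n/d_{n-1}=a_{n+1}$ (so $d_n=a_2a_3\cdots a_{n+1}$), and check by comparing entries that $\check T=D\,(T^{[N]}_1)^{\top}D^{-1}$: conjugation leaves the diagonal $b_n$ untouched, turns the superdiagonal $a_{n+2}$ of $(T^{[N]}_1)^{\top}$ into the constant $1$, sends its subdiagonal $c_{n+1}$ to $a_{n+2}c_{n+1}$, and its second superdiagonal $1$ to $a_{n+2}a_{n+3}$, which is exactly $\check T$. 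The identical computation with $D_k=\diag(d^{(k)}_0,d^{(k)}_1,\dots)$, $d^{(k)}_n=a_{k+2}\cdots a_{k+n+1}$, gives $\check T^{[k]}=D_k\,(T^{[N,k]}_1-\mathscr m_{k+1}E_{1,1})^{\top}D_k^{-1}$, the corner $b_{k+1}-\mathscr m_{k+1}$ being the only entry affected by the shift.

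Next I would invoke the factorizations already at our disposal. By \eqref{eq:LU_bis} one has $T^{[N]}_1=J^{[N,1]}U^{[N-1]}$, and by \eqref{eq:trunactions JNK} one has $T^{[N,k]}_1-\mathscr m_{k+1}E_{1,1}=J^{[N,k+1]}U^{[N-1,k]}$; in both cases the left Jacobi factor is oscillatory (Theorem~\ref{teo:firstLU2}, items (1) and (4)) and the right factor is upper bidiagonal with strictly positive diagonal $\alpha_1,\alpha_4,\dots$ and unit superdiagonal (Theorem~\ref{teo:firstLU}, item (2)). Since an oscillatory Jacobi matrix admits a PBF by Proposition~\ref{pro:Jacobi_PBF}, I would write $J^{[N,1]}=\mathcal L\mathcal U$ with $\mathcal L$ lower unit bidiagonal and $\mathcal U$ upper bidiagonal, both with positive entries, and likewise for $J^{[N,k+1]}$. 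Transposing the product then yields $(T^{[N]}_1)^{\top}=(U^{[N-1]})^{\top}\mathcal U^{\top}\mathcal L^{\top}$, a product of two lower bidiagonal factors and one upper bidiagonal factor, all with nonnegative entries and strictly positive bands.

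Finally I would conjugate by $D$. Conjugation by a positive diagonal matrix preserves both the bidiagonal shape and the sign of every entry, so $\check T=\bigl(D(U^{[N-1]})^{\top}D^{-1}\bigr)\bigl(D\mathcal U^{\top}D^{-1}\bigr)\bigl(D\mathcal L^{\top}D^{-1}\bigr)$ displays $\check T$ as a product of two positive lower bidiagonal matrices and one positive upper bidiagonal matrix. Pulling the positive diagonals of the two lower factors to the right normalises them to unit lower bidiagonal matrices $\check L_1,\check L_2$ and leaves an upper bidiagonal factor $\check U$ with positive diagonal; its superdiagonal must equal $1$, because $\check L_1\check L_2$ is unit lower triangular while the superdiagonal of $\check T$ is $1$ by construction. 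Thus $\check T=\check L_1\check L_2\check U$ with all entries of $\check L_1,\check L_2$ and all diagonal entries of $\check U$ strictly positive, i.e. a PBF. The same computation with $D_k$, $J^{[N,k+1]}$ and $U^{[N-1,k]}$ yields the PBF of $\check T^{[k]}$, and passing to $N\to\infty$ is harmless since $\mathscr m_n,\mathscr l_n,\alpha_{3n-2}$ and the Jacobi parameters of $J^{[N,1]}$ are quotients of leading principal minors, hence independent of $N$, so the $\check\alpha_j$ stabilise.

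The main obstacle I anticipate is the bookkeeping of the last step: tracking how the positive diagonals redistribute among the three bidiagonal factors when passing to the canonical normalisation of \eqref{eq:LU_J_oscillatory_factors}, and confirming that the resulting banded shapes are exactly those of $L_1$, $L_2$ and $U$. Everything else is either a direct entrywise check (the diagonal-conjugation identities of the first paragraph) or a citation of the oscillatory and factorization results of Section~\ref{sec:2}. Positivity needs no separate argument once the three factors are exhibited as positive bidiagonals, and the oscillatory character of $\check T$ and $\check T^{[k]}$ then follows for free, these being finite truncations that possess a PBF.
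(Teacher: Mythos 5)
Your proposal is correct and follows essentially the same route as the paper: both realise $\check T$ and $\check T^{[k]}$ as positive diagonal conjugates of the transposes of $T^{[N]}_1$ and $T^{[N,k]}_1-\mathscr m_{k+1}E_{1,1}$, invoke the factorizations \eqref{eq:LU_bis} and \eqref{eq:trunactions JNK} to exhibit them as (positive lower bidiagonal)\,$\times$\,(oscillatory Jacobi), and then apply Proposition~\ref{pro:Jacobi_PBF} to split the Jacobi factor, yielding the three positive bidiagonal factors after renormalisation. Your explicit bookkeeping of how the diagonals redistribute to reach the normalisation of \eqref{eq:LU_J_oscillatory_factors} is in fact slightly more careful than the paper's closing sentence.
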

\begin{proof}
	Theorem \ref{teo:firstLU2} ensures that the submatrix	$	T^{[N]}_{1}$ and its 
	submatrices $T^{[N,k]}_{1}$ are oscillatory. Moreover, Equation \eqref{eq:trunactions JNK} and the fact that 
	$ J^{[N,k]}$ is oscillatory and $U^{[N-1,k]}$ is InTN (notice that the product is InTN and the elements in the first superdiagonal and subdiagonal are positive, then use Gatmacher--Krein Criterion, Theorem \ref{teo:Gantmacher--Krein Criterion}) imply that the retraction $T^{[N,k]}_{1}-\mathscr l_{k+1}	E_{1,1}$ is oscillatory as well. These matrices are upper Hessenberg, so that transposition will transform then in lower Hessenberg
	\begin{align*}
		\big(T^{[N]}_1\big)^\top&\coloneq \begin{bNiceMatrix}[columns-width = 1cm]
			b_1 & a_2&0&\Cdots&&&0\\[-3pt]
			c_1&b_2&a_3&\Ddots&&&\Vdots\\
			1&c_2&b_3&a_4&&&\\
			0&1&c_3&b_4&a_5&&\\
			\Vdots&\Ddots&\Ddots&\Ddots&\Ddots& \Ddots&0\\
			&&&&&b_{N-1}&a_{N}\\
			0&\Cdots&&0&1&a_{N-1}&b_N
		\end{bNiceMatrix}, \\
		\big(T^{[N,k]}_1-m_{k+1}E_{1,1}\big)^\top&\coloneq \begin{bNiceMatrix}[columns-width = 1.5cm]
			b_{k+1} -m_{k+1}& a_{k+2}&0&\Cdots&&&0\\[-3pt]
			c_{k+1}&b_{k+2}&a_{k+3}&\Ddots&&&\Vdots\\
			1&c_{k+2}&b_{k+3}&a_{k+4}&&&\\
			0&1&c_{k+3}&b_{k+4}&a_{k+5}&&\\
			\Vdots&\Ddots&\Ddots&\Ddots&\Ddots& \Ddots&0\\
			&&&&&b_{N-1}&a_{N}\\
			0&\Cdots&&0&1&a_{N-1}&b_N
		\end{bNiceMatrix}.
	\end{align*}
	They are not normalized to be monic on the first superdiagonal but on the second subdiagonal. However, a similarity $T\mapsto ATA^{-1}$ transformation by the diagonal matrix 
\begin{align*}
		A=\diag(1,a_2,a_2a_3,\dots, a_2a_3\cdots a_N)
\end{align*}
	leads to monic banded Hessenberg matrix, 
	\begin{align*}
		\check T^{[N-1]}\coloneq A\big(T^{[N]}_1\big)^\top A^{-1}=\begin{bNiceMatrix}[columns-width = auto]
			b_1 & 1&0&\Cdots&&&0\\[-3pt]
			a_2c_1&b_2&1&\Ddots&&&\Vdots\\
			a_2a_3&a_3c_2&b_3&1&&&\\
			0&a_3a_4&a_4c_3&b_4&1&&\\
			\Vdots&\Ddots&\Ddots&\Ddots&\Ddots& \Ddots&0\\
			&&&&&b_{N-1}&1\\
			0&\Cdots&&0&a_{N-1}a_N&a_Nc_{N-1}&b_N
		\end{bNiceMatrix}
	\end{align*}
	which happens to be oscillatory. Analogously, a similarity $T\mapsto ATA^{-1}$ transformation by the diagonal matrix 
	$	A_k=\diag(1,a_{k+2},a_{k+2}a_{k+3},\dots, a_{k+2}a_{k+3}\cdots a_N)$
	leads to monic banded Hessenberg matrix, 
	\begin{align*}
		\check T^{[N-1,k]}&\coloneq A_k\big(T^{[N,k]}_1-\mathscr m_{k+1}E_{1,1}\big)^\top A_k^{-1}\\&=\begin{bNiceMatrix}[columns-width = 1.5cm]
			b_{k+1 }-\mathscr m_{k+1}& 1&0&\Cdots&&&0\\
			a_{k+2}c_{k+1}&b_{k+2}&1&\Ddots&&&\Vdots\\
			a_{k+2}a_{k+3}&a_{k+3}c_{k+2}&b_{k+3}&1&&&\\
			0&a_{k+3}a_{k+4}&a_{k+4}c_{k+3}&b_{k+4}&1&&\\
			\Vdots&\Ddots&\Ddots&\Ddots&\Ddots& \Ddots&0\\
			&&&&&&1\\
			0&\Cdots&&0&a_{N-1}a_N&a_Nc_{N-1}&b_N
		\end{bNiceMatrix} ,
	\end{align*}
retraction that happens to be oscillatory.
Moreover, according to \eqref{eq:LU_bis} and \eqref{eq:trunactions JNK} these matrices admit the factorizations
	\begin{align}\label{eq:fac_tilde}
	\begin{aligned}
			\check T^{[N-1]}&=\check L^{[N-1]}\check J^{[N]}, & \check L^{[N-1]}&\coloneq A\big(U^{[N-1]}\big)^\top A^{-1}, &
			\check J^{[N,1]}&=A\big(J^{[N,1]}\big)^\top A^{-1},\\
			\check T^{[N-1,k]}&=\check L^{[N-1,k]}\check J^{[N,1]}, & \check L^{[N-1,k]}&\coloneq A_k\big(U^{[N-1,k]}\big)^\top A_k^{-1}, &
			\check J^{[N,k+1]}&=A_k\big(J^{[N,k+1]}\big)^\top A_k^{-1},
		\end{aligned}
	\end{align}
with (recalling that $	\alpha_1 =c_0$)
	\begin{gather*}
		\begin{aligned}
			\check L^{[N-1]}	&=
			\begin{bNiceMatrix}
				c_0&0&\Cdots&&0\\
				a_{2} &\alpha_4 &\Ddots&&\\
				0& a_3& \alpha_7& &\Vdots\\
				\Vdots& \Ddots& \Ddots & \Ddots&0\\
				0&\Cdots&0&a_N&\alpha_{3N-2}
			\end{bNiceMatrix}, &\check J^{[N,1]}&=\left[\begin{NiceMatrix}[columns-width = auto]
				\mathscr m_1& \frac{\mathscr l_2}{a_2} &0&\Cdots&&0\\
				a_2 &\mathscr m_2& \frac{\mathscr l_3}{a_3} & \Ddots&&\Vdots\\
				0& a_3&\mathscr m_3&\frac{\mathscr l_4}{a_4}&&\\
				\Vdots& \Ddots& \Ddots & \Ddots&\Ddots&0\\
				&&&&& \frac{\mathscr l_N}{a_N} \\
				0	&\Cdots&&0&a_N&\mathscr m_N
			\end{NiceMatrix}\right],
		\end{aligned}\\
		\hspace*{-1.25cm}\begin{aligned}
			\check L^{[N-1,k]}	&=
			\begin{bNiceMatrix}[columns-width = 1cm]
				\alpha_{3k+1}&0&\Cdots&&0\\
				a_{k+2} &\alpha_{3k+4} &\Ddots&&\\
				0& a_{k+3}& \alpha_{3k+7}& &\Vdots\\
				\Vdots& \Ddots& \Ddots & \Ddots&0\\
				0&\Cdots&0&a_N&\alpha_{3N-2}
			\end{bNiceMatrix}, &\check J^{[N,k+1]}&=\left[\begin{NiceMatrix}[columns-width = 1cm]
				\mathscr m_{k+1}& \frac{\mathscr l_{k+2}}{a_{k+2}} &0&\Cdots&&0\\
				a_{k+2} &\mathscr m_{k+2}& \frac{\mathscr l_{k+3}}{a_{k+3}} & \Ddots&&\Vdots\\
				0& a_{k+3}&\mathscr m_{k+3}& \frac{\mathscr l_{k+4}}{a_{k+4}}&&\\
				\Vdots& \Ddots& \Ddots & \Ddots&\Ddots&0\\
				&&&&& \frac{\mathscr l_N}{a_N} \\
				0	&\Cdots&&0&a_N&\mathscr m_N
			\end{NiceMatrix}\right].
		\end{aligned}
	\end{gather*}
Now, the Jacobi matrices involved in these factorizations are oscillatory and consequently admit a PBF. Hence, the existence of a PBF follows.
\end{proof}

\section*{Conclusions and outlook}

In \cite{previo} it was shown that for bounded banded Hessenberg matrices (having $p+2$ diagonals) with a PBF we can ensure a spectral Favard theorem \cite{Simon} for multiple orthogonal polynomials. This is the first extension of the spectral Favard theory to multiple orthogonality.\footnote{
For an account of multiple orthogonal polynomials see \cite{nikishin_sorokin} and \cite{Ismail} (chapter written by Van Assche), see \cite{afm} for its description in terms of a Gauss--Borel factorization and its connection with integrable systems and \cite{bfm} for a discussion of Pearson equations and Christoffel formulas for general Christoffel/Geronimus transformations.} Therefore, is important to discuss how the oscillatory character of a banded Hessenberg matrix is related to this PBF. We have shown that for the well known tridiagonal case, the corresponding Jacobi matrix when conveniently shifted is oscillatory, and that oscillatory and PBF coincide in this case. Next step is the tetradiagonal case. In this work we study the existence of a PBF for a generic tetradiagonal oscillatory Hessenberg matrix. 

In terms of continued fractions, in the finite case we prove the existence of the bidiagonal positive factorization in the tetradiagonal scenario and for the infinite case we present a bound for the existence of the bidiagonal positive factorization.
We take the oscillatory Toeplitz matrices as a case study and prove that they admit PBF. Also, it is shown that whenever a oscillatory tetradiagonal matrix is not PBF there are several ways of finding associated oscillatory matrices that have a PBF.

In \cite{proximo} for the tetradiagonal case, and corresponding multiple orthogonal polynomials in the step-line with two weights, the PBF factorization is given in terms of the values of the orthogonal polynomials of type I and II at $0$ and, consequently, an spectral interpretation of the Darboux transformation is given.

For the future, we will like to understand what happens with Hessenberg matrices with more diagonals. Preliminary attempts show that the role of continued fractions must be replaced by more general objects, maybe branched continued fractions. Similar questions open for the constant Toeplitz matrix, do a oscillatory pentadiagonal Toeplitz matrix always admit a PBF? What happens when the banded recursion matrix has several superdiagonals as well as subdiagonals?


\begin{thebibliography}{99}
 
 
 \bibitem{afm}
 Carlos Álvarez-Fernández, Ulises Fidalgo, and Manuel Mañas,
 \emph{Multiple orthogonal polynomials of mixed type: Gauss-Borel factorization and the multi-component 2D Toda hierarchy},
 Advances in Mathematics~\textbf{227} (2011) 1451–1525.
 
 \bibitem{Aptekarev_Kaliaguine_VanIseghem} 
 Alexander Aptekarev, Valery Kaliaguine, and Jeannette Van Iseghem, \emph{The Genetic Sums’ Representation for the Moments of a System of Stieltjes Functions and its Application},
Constructive Approximation \textbf{16} (2000) 487–524.


 
 

 
\bibitem{nuestro2}
Amílcar Branquinho, Juan E. Fernández-Díaz, Ana Foulquié-Moreno, and Manuel Mañas, 
\emph{ Hypergeometric Multiple Orthogonal Polynomials and Random Walks},
\hyperref{https://arxiv.org/pdf/2107.00770.pdf}{}{}{\texttt{arXiv:2107.00770}}.
 


 \bibitem{bfm}
 Amílcar Branquinho, Ana Foulquié-Moreno, and Manuel Mañas, 
 \emph{Multiple orthogonal polynomials: Pearson equations and Christoffel formulas}, Analysis and Mathematical Physics \textbf{12}, 129 (2022). \hyperref{https://doi.org/10.1007/s13324-022-00734-1}{}{}{DOI:10.1007/s13324-022-00734-1}.
 
 \bibitem{previo}
 Amílcar Branquinho, Ana Foulquié-Moreno, and Manuel Mañas, 
\emph{Oscillatory Banded Hessenberg Matrices, Multiple Orthogonal Polynomials and Random Walks},
\hyperref{https://arxiv.org/pdf/2203.13578.pdf}{}{}{\texttt{arXiv:2203.13578}}.

 \bibitem{proximo}
Amílcar Branquinho, Ana Foulquié-Moreno, and Manuel Mañas, 
\emph{Bidiagonal factorization of tetradiagonal matrices and Darboux transformations}, 
\hyperref{https://arxiv.org/pdf/2210.10727.pdf}{}{}{\texttt{arXiv:2210.10727}}.




\bibitem{nuestro1}
Amílcar Branquinho, Ana Foulquié-Moreno, Manuel Mañas, Carlos Álvarez-Fernández, Juan E. Fernández-Díaz,
\emph{Multiple Orthogonal Polynomials and Random Walks}, \hyperref{https://arxiv.org/pdf/2103.13715.pdf}{}{}{\texttt{arXiv:2103.13715}}.



 

 

 \bibitem{Edrei}
Albert Edrei,
 \emph{Proof of a Conjecture of Schoenberg on the Generating Function of a Totally Positive Sequence},
Canadian Journal of Mathematics \textbf{5} (1953) 86–94. 
 
 
 
 
\bibitem{Elaydi}
Saber Elaydi,
\emph{An Introduction to Difference Equations},
Third Edition, Undergraduate Texts in Mathematics, Springer-Verlag New York, 2005.
 

 
 
 \bibitem{Fallat-Johnson}
Shaun M. Fallat and Charles R. Johnson,
\emph{Totally Nonnegative Matrices},
Princeton Series in Applied Mathematics, Princeton University Press, 2011, Princeton.
 
 
 
 
 
 
 \bibitem{Gantmacher} Felix P. Gantmacher,
 \emph{Matrix Theory, volume two},
 Chelsea Publishing Company, New York, 1974.
 
 
 
 
 \bibitem{Gantmacher-Krein}
 Felix P. Gantmacher and Mark G. Krein,
 \emph{Oscillation and Kernels and Small Vibrations of Mechanical Systems},
revised second edition, AMS Chelsea Publishing, American Mathematical Society, Providence, Rhode Island.
 
 

 
\bibitem{Grunbaum_Iglesia}
F. Alberto Grünbaum and Manuel D. de la Iglesia,
\emph{An urn model for the Jacobi--Piñeiro polynomials}, Proceedings of the American Mathematical Society, \textbf{150} (2022) 3613–3625.
 
 
 
 \bibitem{Horn-Johnson}
 Roger A. Horn and and Charles R. Johnson,
 \emph{Matrix Analysis},
 Second Edition, Corrected reprint,
 Cambridge University Press, 2018, Cambridge.
 
 
 \bibitem{Ismail}
 Mourad E. H.Ismail,
 \emph{Classical and Quantum Orthogonal Polynomails in One Variable}, Encyclopedia of Mathematics and its Applications \textbf{98}, Cambridge University Press, 2009.
 
 
 \bibitem{Jones}
 William B. Jones and Wolfgang J. Thron,
 \emph{Continued Fractions: Analytic Theory and Applications}, Encyclopedia of Mathematics and its Applications \textbf{11}, Cambridge University Press, Cambridge, 1984.
 






\bibitem{Lima-Loureiro}
Hélder Lima and Ana Loureiro,
\emph{Multiple orthogonal polynomials with respect to Gauss’ hypergeometric function},
Studies in Applied Mathematics \textbf{148} (2022)
154–185.




\bibitem{Lorentzen}
Lisa Lorentzen and Haakon Waadeland,
\emph{Continued Fractions: 1. Convergence Theory} (2nd Edition), Atlantis Press/World Scientific, Paris (2008).



\bibitem{manas}
Manuel Mañas,
\emph{Revisiting Biorthogonal Polynomials. An LU factorization discussion}
in 
\emph{Orthogonal Polynomials: Current Trends and Applications}, edited by E. Huertas and F. Marcellán, SEMA SIMAI Springer Series, \textbf{22} (2021)
273–308, Springer.




%
\bibitem{nikishin_sorokin}
Evgenii M. Nikishin and Vladimir N. Sorokin,
\emph{Rational Approximations and Orthogonality},
Translations of Mathematical Monographs, \textbf{92}, American Mathematical Society, Providence, 1991.
%





\bibitem{Schoenberg}
Isaac J. Schoenberg,
\emph{Some analytical aspects of the problem of smoothing},
Studies and Essays presented to R. Courant on his 60th Anniversary, 
351–370, Interscience Publishers, New York, 1948.




\bibitem{Simon}
Barry Simon,
\emph{Operator Theory. A Comprehensive Course in Analysis, Part 4}, American Mathematical Society, Providence, Rhode Island, 2015.








\bibitem{Wall}
Hubert S. Wall,
\emph{Analytic Theory of Continued Fractions},
American Mathematical Society Chelsea, 1948.

 




\end{thebibliography}
\end{document}